\numberwithin{equation}{section}
\newtheorem{thm}{Theorem}[section]
\newtheorem{lem}[thm]{Lemma}
\newtheorem{prop}[thm]{Proposition}
\newtheorem{cor}[thm]{Corollary}
\theoremstyle{definition}
\newtheorem{dfn}[thm]{Definition}
\theoremstyle{remark}
\newtheorem{rmk}[thm]{Remark}
\theoremstyle{remark}
\newtheorem{example}[thm]{Example}
\newcommand{\cw}{{\mathcal W}}
\newcommand{\ce}{{\mathcal E}}
\newcommand{\ca}{{\mathcal A}}
\newcommand{\cf}{{\mathcal F}}
\newcommand{\co}{{\mathcal O}}
\newcommand{\cl}{{\mathcal L}}
\newcommand{\C}{\mathbb{C}}
\newcommand{\G}{\mathbb{G}}
\newcommand{\N}{\mathbb{N}}
\newcommand{\R}{{\mathbb{R}}}
\newcommand{\T}{{\mathbb{T}}}
\newcommand{\Z}{\mathbb{Z}}
\newcommand{\beq}{\begin{equation}}
\newcommand{\eeq}{\end{equation}}
 \newcommand{\vw}{{\vec \omega}}
\newcommand{\vh}{{\vec h}}
\newcommand{\vn}{{\vec n}}
\newcommand{\vt}{{\vec t}}
\begin{document}

\title[Directional Ergodicity and Weak Mixing]{Directional Ergodicity, Weak Mixing and Mixing for $\Z^d$- and $\R^d$-Actions}
\author[Robinson]{E. Arthur Robinson Jr}
\address[E. A. Robinson]{Department of Mathematics, George Washington University, Washington, DC 20052}
\author[Rosenblatt]{Joseph Rosenblatt}
\address[J. Rosenblatt]{Department of Mathematics,  University of Illinois at Urbana-Champaign, Urbana, IL 61801}
\author[\c Sah\. in]{Ay\c se A. \c Sah\.in }
\address[A. \c Sahin]{Department of Mathematics and Statistics, Wright State University, Dayton, OH 45435}
\date{November 12, 2022}
\maketitle

\begin{abstract} 

For a measure preserving   
$\Z^d$- or $\R^d$-action $T$, on a Lebesgue probability space $(X,\mu)$, and 
a linear subspace $L\subseteq\R^d$, we define notions of direction $L$ ergodicity, weak mixing, and strong mixing.
For $\R^d$-actions, it is clear that these direction $L$ properties 
should correspond to the same properties for the restriction of $T$ to $L$. 
But since an arbitrary $L\subseteq\R^d$ 
does not necessarily correspond to a nontrivial subgroup of $\Z^d$,  a different appzroach is needed
for $\Z^d$-actions. In this case, we define  direction $L$ ergodicity, weak mixing, and mixing  
in terms of the restriction of the unit suspension 
$\widetilde T$ to $L$, but also restricted  to the 
subspace of $L^2(\widetilde X,\widetilde \mu)$ perpendicular to the 
suspension direction.  
For $\Z^d$-actions, we show (as is more or less clear for $\R^d$) 
that these directional properties are spectral properties.
For weak mixing $\Z^d$- and $\R^d$-actions, we show that 
directional ergodicity is equivalent to directional weak mixing.
For ergodic $\Z^d$-actions $T$, we explore the relationship between direction $L$ properties as defined via unit suspensions and embeddings of $T$ in $\R^d$-actions.  Finally, the structure of 
possible sets of non-ergodic and non-weakly mixing directions is determined, and genericity questions are discussed.
\end{abstract}

\bigskip

\noindent{\it In memorium:}
This article is dedicated to the memory of Uwe Grimm, valued colleague, friend, and coauthor.

\section{Introduction}

\bigskip

In the 1980's Milnor  (\cite{Mil}, \cite{Mil2}) introduced the notion of directional dynamics for $\Z^d$-actions in the context of {\em directional entropy}. 
A direction here means an $e$-dimensional linear subspace $L$ of $\R^d$. 
If an $e$-dimensional  direction 
$L$  is ``completely rational" in the sense that it is generated by a subgroup $\Lambda$ isomorphic to 
$\Z^e$, then the directional theory simply reverts to 
the dynamics of the corresponding subgroup action. Milnor's idea is that 
some properties, like entropy, make sense even in arbitrary (i.e., not completely rational)  directions. 
In the years since Milnor's work, several other theories of directional dynamics have appeared, 
most notably, Boyle and Lind's theory of directional expansiveness \cite{BL}.    
Our goal in this paper is to define and study a theory of 
directional ergodicity, directional weak mixing, and directional strong mixing. Our setting throughout this 
paper is free measure preserving actions $T$ of $\Z^d$ or $\R^d$ (or sometimes  a 
locally compact abelian group $G$)
on a Lebesgue probability space $(X,\mu)$.  

There are two approaches in the literature for defining directional properties for a $\Z^d$-action $T$
in an arbitrary direction $L$.
The first approach, which follows Milnor's original definition 
of directional 
entropy, is to study transformations $\{T^{\vec n_i}\}$ for vectors 
$\vec n_i\in\Z^d$ that approximate $L$ in some way.   The second approach, which 
is equivalent to the first for directional entropy \cite{PIsrael}, as well as several other directional theories
(see \cite{BL} and  \cite{JSRec}), 
is to use the unit 
suspension $\widetilde T$ of $T$. Since the unit suspension 
of a $\Z^d$-action is an $\R^d$-action, 
the directional theory for $T$ simplifies to studying the restriction 
of $\widetilde T$ to subspaces $L$ of $\R^d$. 
It is this latter approach that we use here to define directional ergodicity, weak mixing and strong mixing 
for $\Z^d$-actions. 

This paper is organized as follows.  After providing some basic definitions in Section~\ref{defandex},
we describe the ergodic properties of restrictions of locally compact abelian group $G$-actions $T$ 
to a closed subgroups $H$. 
These results, mostly known,  depend on duality theory for locally compact abelian groups, and the spectral 
theory of the Koopman representations corresponding to the 
actions. They reduce to properties of the spectral measure on the dual group $\widehat G$. 

In Section~\ref{s_rd} we specialize to actions $T$ of the group is $\R^d$, where the subgroup $H$ 
is a linear subspace $L$ of $\R^d$. 
It is clear that the definition of direction $L$ ergodicity or weak mixing  
in this case should be the same property 
for the restriction of $T$ to $L$.
The main idea is that there is an eigenfunction with eigenvalue $\vec \ell\in L=\widehat L$, 
for $T$ restricted to $L$, if and only if the spectral measure $\sigma^T_0$ for $T$ (off the constant functions)  
gives positive measure 
to $L^\perp+\vec\ell\subseteq\R^d$. In this case, we call $L^\perp+\vec\ell$ a ``wall'' for $\sigma^T_0$.
These ideas 
are closely related to results of Pugh and Shub \cite{Pugh} from 1971,
who studied the ergodicity of a single map $R=T^{\vt_0}$ 
in an  
 $\R^d$-action $T$ (which, of course, amounts to restricting $T$ to a  cyclic subgroup).
It turns out to be easy to find weak mixing $\R^d$-actions $T$ that are 
weak mixing in every direction (for example, mixing action $T$, like Bernouli actions, 
and $\R^d$-actions  $T$ with minimal self joinings).
Similarly, ergodic but not weak mixing $\R^d$-actions $T$ always have both ergodic and 
non ergodic directions, but no 
direction can be weak mixing. 
The most interesting case is $\R^d$-actions $T$
that are weak mixing but not mixing.
In  Section~\ref{s:rdstruct} we provide some Gaussian 
examples  with a variety of directional properties, but 
it is remarkable how little is known in general.

Section~\ref{s_zd} begins the main work of this paper. As indicated, 
we define direction $L$ ergodicity and weak mixing for a $\Z^d$-action $T$ in terms 
of the restriction of the unit suspension $\widetilde T$ to an arbitrary subspace (i.e., a direction) $L\subseteq\R^d$. 
However, we immediately encounter a 
new technical issue:
unit suspensions $\widetilde T$ always have 
non-ergodic directions $L$ having
nothing to do with the dynamics of $T$. To deal with this, we 
define direction $L$ ergodicity and weak mixing 
as the absence of eigenfunctions for $\widetilde T$ 
restricted to $L$ 
belonging to the subspace 
of $L^2$ perpendicular to the suspension direction
(Definition~\ref{Zddef1}).  We
show (Corollary~\ref{arespectral})  
that directional ergodicity and weak mixing are spectral properties of $T$ (not just $\widetilde T$), 
which implies  
they are isomorphism invariants for the $\Z^d$-action $T$. 

To justify our definitions of these directional properties, we prove two things:
First, when $L$ is a completely rational direction, 
ergodicity and weak mixing 
for a $\Z^d$-action $T$ in a direction $L$ 
agree with ergodicity and weak mixing for 
$T$ restricted to $\Lambda=L\cap\Z^d$ (Theorem~\ref{p:equivdef}).
Second, in the case that $T$ embeds in an $\R^d$-action 
$\overline T$, ergodicity and weak mixing 
for $T$ in the direction $L$  agree with directional ergodicity and weak mixing for 
$\overline T$ restricted to $\Lambda$ (Theorem~\ref{t:ergemb} and
(Corollary~\ref{t:wmemb}). So our directional definitions are consistent 
with more standard definitions when they make sense.
But our directional theory is more general.

Section~\ref{s:rdstruct} begins by showing that for an ergodic action $T$ of $\R^2$  or $\Z^2$, 
the set of non-ergodic or non-weak mixing directions is always countable. 
For $d>2$ we obtain a similar countability result, but the exact statement
(Theorem~\ref{whatdirections}) is complicated by the fact that if $L$ is
non-ergodic (or non-weak mixing), so are all of its subspaces.   
Conversely, 
we also show that any set of directions that satisfy the conclusion of 
Theorem~\ref{whatdirections} can be realized by a 
weak mixing $\Z^d$-action $T$ 
(Theorem~\ref{t:cecomplete}). In particular for $d=2$, any countable set of directions can be  the
 set of non-ergodic or non-weak mixing directions
This construction uses Gaussian actions.    
These countability results are reminiscent of 
Pugh and Shub \cite{Pugh}, who show that for an
ergodic $\R^d$ action $T$, the set of $\vt_0\in\R^d\backslash\{\vec 0\}$ so that transformation
$T^{\vt_0}$ is not ergodic is a countable union of affine subsets in $\mathbb R^d$.

In Section~\ref{six} we wrap up with some additional results. 
First, we prove (Theorem~\ref{p:ergiswm}) that for a weak mixing 
$\Z^d$- or $\R^d$-action
$T$, directional ergodicity implies directional weak mixing. 
Next, we define directional strong mixing and
show that a strong mixing $\Z^d$-action is strong mixing in every direction.
Finally, we turn to questions of genericity.  
Ryzhikov showed \cite{Ryz} that a generic $\Z^d$-action $T$ is weak mixing and 
embeds in an $\R^d$-action $\overline T$ which is (in our terminology) 
weak mixing in every direction. 
 Since our Corollary~\ref{t:wmemb} says that whenever $T$ embeds in an $\R^d$-action $\overline T$,
 then $T$ and $\overline T$ have the same 
weak mixing directions, it
follows that weak mixing in every direction is a generic property for $\mathbb Z^d$ actions $T$.  
We end the paper with a direct proof of this fact.
  
The authors would like to acknowledge Vitaly Bergelson who initially posed to us the question of how to define directional ergodicity and weak mixing for $\mathbb Z^d$ actions.  We also thank Robert Kaufman for several valuable suggestions regarding Rajchman measures and Kronecker sets in the plane.  

\section{Basic definitions and motivating examples}\label{defandex}

In this section we remind the reader of basic definitions from the spectral theory of ergodic actions, establish notation, and  provide details for some basic examples that motivate the results in the paper.  We refer the reader 
to \cite{Glasner} or \cite{KT} for omitted details.  

\subsection{Group and subgroup actions} \label{s:subgroups}

Let $G$ be a second countable locally compact 
abelian group (from now on we simply call $G$ a {\em group}, and use additive notation). 
We will consider free measurable and measure preserving $G$-actions $T=\{T^g\}_{g\in G}$ on a Lebesgue probability space $(X,\mu)$.  
In particular, $T$ consists of a map $(g,x)\mapsto T^g x:G\times X\to X$ that 
is measurable with respect to the Borel sets in $G$.  Moreover, for each $g\in G$, 
$T^g:X\rightarrow X$ is a measure preserving transformation, the map
$g\mapsto T^g$ is continuous in the weak topology, and $T^{g}\left(T^{h}(x)\right)=T^{h}\left(T^{g}(x)\right)=T^{g+h}(x)$
for all $g,h\in G$.   
 We often just refer to $T=\{T^g\}_{g\in G}$ on  $(X,\mu)$ as {\em the $G$-action $T$}. 

Let $H\subseteq G$ be a closed subgroup.    
The {\em restriction} of $T$ to $H$, denoted by 
$T^{|H}$, is the $H$-action on $(X,\mu)$ defined by $T^{|H}=\{T^h\}_{h\in H}$.  
 
In this paper, the group  $G$ will usually be either $\Z^d$ or $\R^d$ or a 
nontrivial closed subgroup of one of these. 
For $G=\R^d$, the subgroups that will mostly interest us will be the the 
linear subspaces $L\subseteq\R^d$.

\begin{dfn} For $0<e<d$
an $e$-dimensional linear subspace $L\subseteq\R^d$ is called
an {\it $e$-dimensional direction in $\R^d$}.
\end{dfn}

The set of all $e$-dimensional directions in $\R^d$
is an $e(d-e)$-dimensional compact manifold, called the {\em Grassmanian}, and denoted 
by $\G_{e,d}$ (see \cite{Grass}).  We define $\G_{0,d}=\{\vec 0\}$ and $\G_{d,d}=\{\R^d\}$, and  we
write $\G_d:=\cup_{e=0}^{d}\G_{e,d}$.  

The nontrivial subgroups $H\subseteq\Z^d$ are all closed and 
satisfy $H\sim\Z^e$  for some $0<e\le d$. Letting $L=\text{span}(H)\subseteq\R^d$ 
we have that $L\subseteq\R^d$ is an 
$e$-dimensional linear subspace (that is, $L\in\G_{e,d}$).

\begin{dfn}\label{d:rational}
We call  $L\in\mathbb G_{e,d}$ 
a {\it completely rational direction} if there exists $\Lambda\subseteq\Z^d$ with $\Lambda\sim\mathbb Z^e$ so that $\Lambda=L\cap\Z^d$.  A direction
$L$ is called an {\it irrational direction} if $L\cap\Z^d=\emptyset$.
\end{dfn}
Note that in the case $d=2$ if $L$ is not completely rational then it is irrational, but this is not true for $d>2$.  

\subsection{Duals}\label{s:duals}

The {\em dual group}  $\widehat G$ of a group $G$ is the set of {\em characters} of the group, namely the set of all continuous homomorphisms
$\gamma:G\to S^1=\{z\in\C:|z|=1\}$ with pointwise multiplication. With the compact open topology, $\widehat G$ is also a second countable locally compact abelian group.  

For a closed subgroup $H\subseteq G$, 
the {\it annihilator} of $H$ is defined by
$H^\perp:=\{\gamma\in\widehat G:\gamma(h)=1\text{ for all }h\in H\}\subseteq\widehat G$.
One has $\widehat H=\widehat G/H^\perp$, and $H^\perp=\widehat{G/H}$.
We let $\pi:\widehat G\rightarrow\widehat G/H^\perp=\widehat H$ denote the canonical projection.

We identify $\widehat \R^d=\R^d$, and 
for a linear subspace
$L\subseteq\R^d$, we identify the dual $\widehat L$ of  with $L$ itself
by identifying  
$\vec \ell\in  L$ with the character
\begin{equation}\label{char}
{\vec \ell}(\vec t\,)=e^{2\pi  i\, (\vec \ell\cdot\vec t\,)}.
\end{equation} 
In this case, $L^\perp$ is just the perpendicular subspace.

The dual group of $\Z^d$ is $\T^d=\R^d/\Z^d\cong[0,1)^d$, where 
$\vec a\in \T^d$ is identified with the character
\begin{equation}\label{defa}
{\vec a\,}(\vec n\,)= e^{2\pi  i (\vec a\cdot\vec n\,)}.
\end{equation}

If $\Lambda\subseteq\Z^d$ is the maximal subgroup in an 
$e$-dimensional completely rational direction $L\subseteq\R^d$  
then (\ref{defa}) implies 
that $\Lambda^\perp=\pi(L^\perp)\subseteq\T^d$, where
$\pi:\R^d\to\T^d=\R^d/\Z^d$ is the canonical projection. In this case,
$\Lambda^\perp\subseteq\T^d$ is a closed $(d-e)$-dimensional sub-torus, and one has
$\widehat\Lambda=\widehat G/\Lambda^\perp\sim\T^e$.
In the non-maximal case $H\subseteq\Z^d$ one has $H^\perp=\pi(L^\perp)+F$ 
where $L=\text{span}(H)$ and $F\subseteq\pi(L)$ is a finite subgroup.
Geometrically, $H^\perp\subseteq\T^d$ is a finite union of 
closed $(d-e)$-dimensional subtori, and $\widehat H=\widehat\Lambda/\widehat F\sim\T^e$.

\subsection{Eigenvalues and the spectral type}

For a $G$-action $T$, we say a nonzero complex valued
$f\in L^2(X,\mu)$, 
is an {\em eigenfunction} corresponding to {\em eigenvalue}
$\gamma\in\widehat G$ if
$f(T^g x)=\gamma(g)f(x)$
for all $g\in G$ and $\mu$ a.e. $x\in X$.
In particular, an {\em invariant function} $f\in L^2(X,\mu)$, i.e. $f(T^g x)=f(x)$  for all $g\in G$,
is an eigenfunction for eigenvalue $0\in\widehat G$. 
The set $\Sigma_T\subseteq\widehat G$ of eigenvalues of $T$ is called the {\em point spectrum} of $T$.

A $G$-action 
$T$ is {\em ergodic} if the constant functions  
are the only invariant functions. A $G$-action $T$ is {\em weak mixing}
if the constant functions
are the only eigenfunctions. 
The following is standard.

\begin{lem}\label{easyergodic} 
Let $T$ be an ergodic $G$-action. Then
\begin{enumerate}
\item all eigenvalues are simple (each eigenfunction $f$ is unique up to a constant multiple), 
\item all eigenfunctions $f$ have 
constant absolute value $\mu$ a.e. (up to a constant multiple we may assume $|f|=1$), and
\item the set $\Sigma_T\subseteq\widehat G$  
is an (at most) countable subgroup.  
\end{enumerate}
\end{lem}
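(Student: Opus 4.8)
The plan is to prove the three claims in the order (2), (1), (3), since each later part uses the earlier ones, and the only genuine input beyond elementary manipulation is ergodicity together with the fact that characters have modulus one. First I would establish (2). If $f$ is an eigenfunction for $\gamma\in\widehat G$, then since $\gamma$ takes values in $S^1$ we get $|f(T^g x)|=|\gamma(g)|\,|f(x)|=|f(x)|$ for every $g\in G$ and a.e.\ $x$. Hence $|f|$ is an invariant function, and ergodicity forces $|f|$ to be constant a.e.; as $f\neq 0$ this constant is positive, so rescaling achieves $|f|\equiv 1$.

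Next I would deduce (1). Given two eigenfunctions $f,f'$ for the same eigenvalue $\gamma$, part (2) lets me normalize $|f|=|f'|=1$ a.e., so $f'$ is nonzero a.e.\ and the quotient $f/f'$ is defined a.e. The factors $\gamma(g)$ cancel in $f(T^g x)/f'(T^g x)$, so $f/f'$ is $T$-invariant, hence constant by ergodicity; this is exactly the assertion that the eigenspace is one-dimensional.

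For (3) there are two separate points. The subgroup structure: $0\in\Sigma_T$ because constants are eigenfunctions for the trivial character; and if $\gamma_1,\gamma_2\in\Sigma_T$ have eigenfunctions $f_1,f_2$, then $f_1f_2$ is an eigenfunction for $\gamma_1+\gamma_2$ while $\overline{f_1}$ is one for $-\gamma_1$, where in each case part (2) guarantees the resulting function is not identically zero. For countability, the decisive computation is that eigenfunctions for distinct eigenvalues are orthogonal: using that each $T^g$ preserves $\mu$, together with the eigenfunction relations, gives
\begin{equation}
\langle f_1,f_2\rangle=\langle f_1\circ T^g,\,f_2\circ T^g\rangle=\gamma_1(g)\,\overline{\gamma_2(g)}\,\langle f_1,f_2\rangle ,
\end{equation}
and picking $g$ with $\gamma_1(g)\neq\gamma_2(g)$ (possible whenever $\gamma_1\neq\gamma_2$) forces $\langle f_1,f_2\rangle=0$. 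By (1) each eigenspace is one-dimensional, so normalizing the generators as in (2) yields an orthonormal family in $L^2(X,\mu)$ indexed by $\Sigma_T$.

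The step I would treat as the main point is the final passage to countability: it relies on $L^2(X,\mu)$ being separable—which holds precisely because $(X,\mu)$ is a Lebesgue probability space—so that any orthonormal subset is at most countable, whence $\Sigma_T$ is countable. Everything else is a routine consequence of ergodicity and the unimodularity of characters, so I expect the separability input to be the only structural assumption worth flagging explicitly.
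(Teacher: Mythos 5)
Your proof is correct. The paper states this lemma with the remark ``The following is standard'' and omits the proof entirely, so there is no argument in the text to compare against; your write-up is precisely the standard one (unimodularity of characters plus ergodicity of $T$ for parts (1) and (2), closure of eigenfunctions under products and conjugates for the subgroup claim, and orthogonality of eigenfunctions for distinct eigenvalues combined with separability of $L^2(X,\mu)$ for countability), and you have correctly identified separability of the Lebesgue space as the one structural hypothesis doing real work in part (3).
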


The {\em Koopman representation} for  a $G$-action $T$
is  the unitary representation of $G$ on the (complex) separable Hilbert space  
$L^2(X,\mu)$, $U_T=\{U_T^{g}\}_{g\in G}$, defined 
 by $(U^{g}_T f)(x):=f(T^{g}x)$. 
 For any $f\in L^2(X,\mu)$ one defines the correlation function 
 \begin{equation}\label{newlabel}
 \varphi_f(g):=(U_T^g f,f)=\int_X f(T^{g}x)\overline{f(x)}\,d\mu.
 \end{equation}
 The function $\varphi_f$ is positive definite on $G$, so by Bochner's Theorem, there exists a unique 
 finite Borel measure $\sigma_f$ on $\widehat G$ with $\varphi_f$ as its Fourier transform 
 (see \cite{KT}): 
 \begin{equation}\label{smgen}
 \varphi_f(g)=
 \int_{\widehat G}\overline{\gamma(g)}d\sigma_f(\gamma).
 \end{equation}
 Assuming $T$ is implicitly known, we call $\sigma_f$ the {\em spectral 
measure} for $f$.   

Many of our arguments depend on consequences of the {\em spectral 
theorem for unitary representations of $G$}. 
Most of what we will need is included in the following proposition.
 
\begin{prop}[see e.g., \cite{KT}]\label{sst}
Suppose $T$ is a $G$-action on $(X,\mu)$ and let $U_T$ be the 
corresponding 
Koopman 
representation of $G$ on $L^2(X,\mu)$. 
Let $U_T|_{\mathcal F}$  be the restriction of $U_T$ to 
the closed $U_T$-invariant subspace
$\mathcal F\subseteq L^2(X,\mu)$. Then there exists
$f_0\in\mathcal F$ {\rm (}called a {\em function of maximal spectral type} for $U_T|_{\mathcal F}${\rm )} so that for any $f\in\mathcal F$, the corresponding 
spectral measures satisfy $\sigma_f\ll\sigma_{f_0}$. 
The measure $\sigma_{f_0}$, which is unique up to 
Radon-Nikodym equivalence, is called 
a {\em measure} of {\em maximal spectral type}
for $U_T|_{\mathcal F}$.
Conversely, for any $\sigma\ll\sigma_{f_0}$ there is an $f\in \mathcal F$ so that 
$\sigma=\sigma_f$.  Moreover, if $\sigma_{f_1}\perp\sigma_{f_2}$
for $f_1,f_2\in\mathcal F$ then $U_T^g f_1\perp f_2$ for all 
$g\in G$.
\end{prop}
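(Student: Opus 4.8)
The plan is to route everything through a single projection-valued measure and then read off each assertion. First I would invoke the spectral theorem for a weakly continuous unitary representation of a second countable locally compact abelian group (the SNAG theorem): since $g\mapsto U_T^g$ is such a representation of $G$, there is a unique regular projection-valued measure $E$ on the Borel sets of $\widehat G$, with values in the orthogonal projections of $L^2(X,\mu)$, satisfying $U_T^g=\int_{\widehat G}\overline{\gamma(g)}\,dE(\gamma)$. Because $\cf$ is closed and $U_T$-invariant, each $E(A)$ restricts to a projection of $\cf$. Comparing $\varphi_f(g)=(U_T^g f,f)=\int_{\widehat G}\overline{\gamma(g)}\,d(E(\cdot)f,f)(\gamma)$ with the Bochner representation (\ref{smgen}) and using uniqueness in Bochner's theorem identifies the spectral measure as $\sigma_f(A)=(E(A)f,f)=\|E(A)f\|^2$. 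All four assertions then become statements about $E$ on $\cf$.

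For existence of a maximal spectral type I would use that $\cf$ is separable. Repeatedly picking a vector in the orthogonal complement of the cyclic subspaces already chosen yields a countable orthogonal decomposition $\cf=\bigoplus_k Z(v_k)$ into cyclic subspaces $Z(v):=\overline{\operatorname{span}}\{U_T^g v:g\in G\}$. Set $f_0=\sum_k 2^{-k}\|v_k\|^{-1}v_k$, a convergent series. Since the $Z(v_k)$ are $E$-invariant and mutually orthogonal, $\sigma_{f_0}=\sum_k 4^{-k}\|v_k\|^{-2}\sigma_{v_k}$. If $\sigma_{f_0}(A)=0$ then $\sigma_{v_k}(A)=\|E(A)v_k\|^2=0$ for every $k$, so $E(A)$ annihilates each $Z(v_k)$ and hence vanishes on $\cf$; consequently $\sigma_f(A)=\|E(A)f\|^2=0$ for every $f\in\cf$. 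Thus $\sigma_f\ll\sigma_{f_0}$ for all $f$, so $f_0$ has maximal spectral type. Uniqueness up to Radon-Nikodym equivalence is then immediate, since any two maximal types dominate one another.

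The converse and the orthogonality clause follow from the functional calculus attached to $E$. For $\sigma\ll\sigma_{f_0}$ I would set $\psi=(d\sigma/d\sigma_{f_0})^{1/2}\in L^2(\sigma_{f_0})$ and $f:=\int_{\widehat G}\psi\,dE\,f_0\in Z(f_0)\subseteq\cf$; the isometry $L^2(\sigma_{f_0})\to Z(f_0)$, $\psi\mapsto\int\psi\,dE\,f_0$, gives $\sigma_f(A)=\int_A|\psi|^2\,d\sigma_{f_0}=\sigma(A)$, as required. For the final clause, if $\sigma_{f_1}\perp\sigma_{f_2}$ choose Borel $S$ with $\sigma_{f_1}(S)=0=\sigma_{f_2}(S^c)$, so $E(S)f_1=0$ and $E(S)f_2=f_2$; then for every $g$, using that $E(S)$ is self-adjoint and commutes with $U_T^g$, $(U_T^g f_1,f_2)=(U_T^g f_1,E(S)f_2)=(E(S)U_T^g f_1,f_2)=(U_T^g E(S)f_1,f_2)=0$.

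The main obstacle is the first step: upgrading Bochner's theorem, which produces the scalar measure $\sigma_f$ one vector at a time, to a single projection-valued measure $E$ governing all of $\cf$ simultaneously. This is the SNAG theorem and is the one genuinely non-elementary input; once $E$ is in hand the remaining items are bookkeeping — the maximal type from the orthogonal cyclic decomposition, the converse from the functional-calculus isometry $L^2(\sigma_{f_0})\cong Z(f_0)$, and the orthogonality clause from the fact that $E$-supports detect mutual singularity. A secondary technical point is ensuring the cyclic decomposition is countable, which is exactly where separability of $L^2(X,\mu)$ enters.
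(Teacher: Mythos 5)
Your proof is correct. Note, however, that the paper does not prove Proposition~\ref{sst} at all: it is stated as a standard consequence of the spectral theorem and attributed to \cite{KT}, so there is nothing in the paper to compare against line by line. What you have written is essentially the standard textbook argument that the cited reference packages: the SNAG theorem produces the projection-valued measure $E$ with $\sigma_f(A)=\|E(A)f\|^2$; the countable orthogonal cyclic decomposition (using separability) gives the maximal spectral type via $f_0=\sum_k 2^{-k}\|v_k\|^{-1}v_k$; the isometry $L^2(\sigma_{f_0})\cong Z(f_0)$ gives the converse; and a separating Borel set $S$ gives the orthogonality clause. All four steps check out, including the computation $\sigma_{f_0}=\sum_k 4^{-k}\|v_k\|^{-2}\sigma_{v_k}$ and the commutation $(U_T^g f_1,E(S)f_2)=(U_T^g E(S)f_1,f_2)$. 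The only place where a reader might want one more sentence is your assertion that $E(A)$ leaves $\mathcal F$ invariant: this is because $\mathcal F$ is invariant under the self-adjoint family $\{U_T^g\}$ (as $(U_T^g)^*=U_T^{-g}$), so the projection onto $\mathcal F$ commutes with every $U_T^g$ and hence with every element of the von Neumann algebra they generate, which contains the spectral projections; the same remark justifies $E(A)Z(v)\subseteq Z(v)$ in your cyclic decomposition. This is standard, but since it is the hinge on which the restriction to $\mathcal F$ turns, it is worth making explicit.
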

    
By Proposition~\ref{sst} there exists $f_*\in L^2(X,\mu)$ that is a 
function of maximal spectral
type for $U_T$ on $L^2(X,\mu)$. 
We call the measure
$\sigma^T:=\sigma_{f^*}$ 
the {\em spectral measure} for $T$.
Let  $L^2_0(X,\mu)=\{1\}^\perp\subseteq L^2(X,\mu)$. 
By Proposition~\ref{sst} there 
is also is a function (also unique up to equivalence) $f_0\in L^2_0(X,\mu)$ of maximal spectral type 
for the restriction $U_T|_{L^2_0(X,\mu)}$. 
We call $\sigma^T_0=\sigma_{f_0}$  the {\em reduced spectral measure} for $T$. 

If two $G$-actions $T$ and $S$ are metrically isomorphic, then their Koopman 
representations are $U_T$ and $U_S$ are unitarily conjugate (also called spectrally isomorphic), 
and thus by the spectral theorem  (see \cite{KT}), their reduced spectral measures $\sigma_0^T$ 
and $\sigma_0^S$ (and their spectral measures $\sigma^T$ 
and $\sigma^S$) are equivalent. Thus the equivalence of spectral measures and reduced spectral measures 
are isomorphism invariants for $G$-actions $T$.
The following results are well known (see e.g., \cite{KT}).

\begin{lem}\label{hasatom} 
 Let $T$ be a $G$-action. Then 
 $\gamma\in \Sigma_T$  
 if and only if $\sigma^T(\{\gamma\})>0$ {\rm (}i.e., $\gamma$ is an atom{\rm )}.
 \end{lem}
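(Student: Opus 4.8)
The plan is to prove both implications through the dictionary between correlation functions and spectral measures furnished by Bochner's theorem \eqref{smgen} together with Proposition~\ref{sst}. Throughout, I would normalize the relevant $f$ to have unit $L^2$ norm, and use that since $f_*$ has maximal spectral type for $U_T$ on all of $L^2(X,\mu)$, every $\sigma_f$ satisfies $\sigma_f\ll\sigma^T$.

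For the forward implication, suppose $\gamma\in\Sigma_T$, so there is $f$ with $\|f\|=1$ and $U_T^g f=\gamma(g)f$ for all $g$. Then the correlation function is itself a character: $\varphi_f(g)=(U_T^g f,f)=\gamma(g)(f,f)=\gamma(g)$. Comparing with \eqref{smgen}, where the integrand carries a conjugate, the uniqueness of the representing measure in Bochner's theorem forces $\sigma_f=\delta_{\bar\gamma}$, the unit point mass at the inverse character $\bar\gamma=\gamma^{-1}$. Since $\sigma_f\ll\sigma^T$, a point mass at $\bar\gamma$ can be absolutely continuous with respect to $\sigma^T$ only if $\sigma^T(\{\bar\gamma\})>0$. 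Finally, both $\Sigma_T$ and the set of atoms of $\sigma^T$ are invariant under $\chi\mapsto\bar\chi$: the point spectrum because $\bar f$ is an eigenfunction for $\bar\gamma$ whenever $f$ is one for $\gamma$, and the atom set because $\sigma_{\bar f}$ is the pushforward of $\sigma_f$ under inversion. Hence $\sigma^T(\{\gamma\})>0$ as well.

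For the reverse implication, assume $\sigma^T(\{\gamma\})>0$. Then the point mass $\delta_\gamma$ is absolutely continuous with respect to $\sigma^T$, since any $\sigma^T$-null set misses $\gamma$ and is therefore $\delta_\gamma$-null. The converse assertion in Proposition~\ref{sst}, applied with $\mathcal F=L^2(X,\mu)$ and $\sigma=\delta_\gamma$, produces $f$ with $\sigma_f=\delta_\gamma$, whence $\varphi_f(g)=\overline{\gamma(g)}$ by \eqref{smgen} and $\|f\|^2=\sigma_f(\hg)=1$. A direct expansion of $\|U_T^g f-\overline{\gamma(g)}f\|^2$, using $\varphi_f(g)=\overline{\gamma(g)}$ together with the unitarity $\|U_T^g f\|=\|f\|$, collapses to $0$; thus $U_T^g f=\overline{\gamma(g)}f$ for every $g$, so $f$ is an eigenfunction with eigenvalue $\bar\gamma$ and $\bar\gamma\in\Sigma_T$. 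By the conjugation symmetry of $\Sigma_T$ noted above, $\gamma\in\Sigma_T$.

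The only real subtleties I anticipate are bookkeeping ones: keeping the conjugate in \eqref{smgen} consistent, which is why the argument naturally lands on $\bar\gamma$ and must be repaired by the $\chi\mapsto\bar\chi$ symmetry, and, in the reverse direction, upgrading the purely spectral datum $\sigma_f=\delta_\gamma$ to an honest pointwise eigenfunction identity. The latter is the crux of the proof, but it is handled cleanly by the vanishing of $\|U_T^g f-\overline{\gamma(g)}f\|^2$ rather than by any delicate measure-theoretic estimate.
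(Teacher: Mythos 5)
The paper does not prove this lemma; it is cited as well known (from \cite{KT}), so there is no in-text argument to compare against. Your proof is correct and is the standard one: the forward direction correctly identifies $\sigma_f$ as a point mass via uniqueness in Bochner's theorem, and the reverse direction correctly upgrades $\sigma_f=\delta_\gamma$ to the eigenfunction identity by expanding $\|U_T^g f-\overline{\gamma(g)}f\|^2$ and using $\varphi_f(g)=\overline{\gamma(g)}$ together with unitarity (note $\|f\|^2=\sigma_f(\widehat G)=1$ guarantees $f\neq 0$). Your care with the conjugate in \eqref{smgen} is warranted: under the paper's convention the eigenfunction for $\gamma$ has spectral measure $\delta_{\bar\gamma}$, and your observation that both $\Sigma_T$ and the atom set of $\sigma^T$ are invariant under $\chi\mapsto\bar\chi$ (the latter because $\sigma_{\bar f}$ is the pushforward of $\sigma_f$ under inversion, so $\sigma^T$ is equivalent to its reflection) correctly closes that gap. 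The only point I would tidy is that the $L^2$ identity $U_T^g f=\overline{\gamma(g)}f$ holds for each fixed $g$ off a $g$-dependent null set; passing to a single full-measure set uses measurability of the action, but this is routine.
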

 
 \begin{cor}\label{c:noatomatzero}
 Let $T$ be an ergodic $G$-action. Then
\begin{enumerate}
 \item\label{tosszero}  
$\sigma^T_0=\sigma^T-\sigma^T(\{0\})\delta_0$,  where $\delta_0$ 
denotes unit point mass at $0\in\widehat G$, and 
 \item $T$ is  weak mixing if and only if 
 $\sigma_0$ is non-atomic. 
 \end{enumerate}
 \end{cor}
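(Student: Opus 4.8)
The plan is to establish part (1) by pinning down a convenient representative of the maximal spectral type of $U_T$ on $L^2(X,\mu)$, and then to read off part (2) from part (1) together with Lemma~\ref{hasatom}. Throughout I use the orthogonal $U_T$-invariant decomposition $L^2(X,\mu)=\C\cdot\jed\oplus L^2_0(X,\mu)$, where $\jed$ is the constant function.

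First I would record that $\sigma_{\jed}=\delta_0$: since $U_T^g\jed=\jed$, the correlation function satisfies $\varphi_{\jed}(g)=(\jed,\jed)=1$ for all $g$, and the uniqueness of the measure in Bochner's theorem (\ref{smgen}) forces $\sigma_{\jed}=\delta_0$. Next I would verify that the reduced spectral measure carries no mass at $0$, i.e. $\sigma^T_0(\{0\})=0$. If it did not, then $\sigma^T_0(\{0\})\,\delta_0\ll\sigma^T_0$, so by the converse direction of Proposition~\ref{sst} there is $f\in L^2_0(X,\mu)$ with $\sigma_f=\sigma^T_0(\{0\})\,\delta_0$; this gives $(U_T^g f,f)=\sigma^T_0(\{0\})=\|f\|^2$ for every $g$, and the equality case of Cauchy--Schwarz then forces $U_T^g f=f$. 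Thus $f$ would be a nonzero invariant function orthogonal to the constants, contradicting ergodicity.

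To finish part (1) I would exhibit $f^*=\jed+f_0$ as a function of maximal spectral type for $U_T$ on all of $L^2(X,\mu)$, where $f_0\in L^2_0(X,\mu)$ is of maximal spectral type for $U_T|_{L^2_0(X,\mu)}$. The point is that the cross-correlations vanish: since $L^2_0(X,\mu)$ is invariant, $U_T^g f_0\in L^2_0(X,\mu)$, which is orthogonal to $\jed$, and $U_T^g\jed=\jed$ is orthogonal to $f_0$, so $\varphi_{f^*}=\varphi_{\jed}+\varphi_{f_0}$ and hence $\sigma^T=\sigma_{f^*}=\delta_0+\sigma^T_0$ by Bochner uniqueness. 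The same computation applied to an arbitrary $f=c\jed+h$ with $h\in L^2_0(X,\mu)$ gives $\sigma_f=|c|^2\delta_0+\sigma_h\ll\delta_0+\sigma^T_0$, confirming maximality. Since $\sigma^T_0(\{0\})=0$, this representative has $\sigma^T(\{0\})=1$, and therefore $\sigma^T-\sigma^T(\{0\})\delta_0=\sigma^T_0$, which is part (1).

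Part (2) is then immediate. By Lemma~\ref{hasatom} the atoms of $\sigma^T$ are exactly the eigenvalues $\Sigma_T$, and by part (1) the atoms of $\sigma^T_0$ are exactly $\Sigma_T\setminus\{0\}$. Since $T$ is weak mixing precisely when its only eigenfunctions are constant, that is when $\Sigma_T=\{0\}$, this holds if and only if $\Sigma_T\setminus\{0\}=\emptyset$, i.e. if and only if $\sigma^T_0$ has no atoms. The one genuinely delicate point in the whole argument is that the stated identity in (1) is an equality of measures, whereas spectral measures are only canonical up to mutual absolute continuity; the resolution is the explicit choice $f^*=\jed+f_0$, for which the vanishing of the cross-correlations yields the additivity $\sigma^T=\delta_0+\sigma^T_0$ on the nose. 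Establishing $\sigma^T_0(\{0\})=0$ from ergodicity is the only other substantive input; the remainder is bookkeeping.
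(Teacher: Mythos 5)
Your proof is correct. The paper itself offers no argument for this corollary (it is stated as ``well known'' with a citation to \cite{KT}), so there is nothing to compare against line by line; your write-up supplies the standard argument one would expect. The two substantive points are exactly the ones you isolate: the Cauchy--Schwarz equality argument showing $\sigma^T_0(\{0\})=0$ under ergodicity (otherwise Proposition~\ref{sst} would produce a nonconstant invariant function in $L^2_0(X,\mu)$), and the choice of the representative $f^*=\jed+f_0$, whose vanishing cross-correlations turn the up-to-equivalence identity into the literal equality $\sigma^T=\delta_0+\sigma^T_0$ asserted in part (1); part (2) then follows from Lemma~\ref{hasatom} exactly as you say. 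No gaps.
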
 

For notational convenience we will use $\varsigma$ to denote spectral measures of actions of 
 continuous groups and $\sigma$ to denote spectral measures of actions of discrete groups. 

\subsection{Properties of subgroup actions}\label{ss:subaction}
 The following is a straightforward observation.
 
\begin{prop}\label{supergroups}
Let $T$ be a $G$-action, and  
 $H\subseteq G$ a closed subgroup.  If 
 $T^{|H}$ is ergodic (or weak mixing) then 
 $T$ is ergodic (or weak mixing).  
\end{prop}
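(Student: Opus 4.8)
The plan is to prove the contrapositive for each property via the spectral/eigenfunction characterizations set up above. Recall that $T$ is \emph{not} ergodic exactly when there is a non-constant invariant function, i.e.\ an eigenfunction for the eigenvalue $0\in\widehat G$, and $T$ is \emph{not} weak mixing exactly when there is a non-constant eigenfunction for some $\gamma\in\Sigma_T$. So suppose $T$ fails to be ergodic (resp.\ weak mixing) as a $G$-action; I want to produce the corresponding failure for $T^{|H}$.

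First I would treat ergodicity. Assume $T$ is not ergodic, so there is a non-constant $f\in L^2(X,\mu)$ with $f(T^g x)=f(x)$ for all $g\in G$ and $\mu$-a.e.\ $x$. Since $H\subseteq G$, this identity holds in particular for all $h\in H$, so $f$ is a non-constant $T^{|H}$-invariant function, witnessing that $T^{|H}$ is not ergodic. Next, for weak mixing, assume $T$ is not weak mixing, so there is a non-constant eigenfunction $f$ with $f(T^g x)=\gamma(g)f(x)$ for all $g\in G$, for some character $\gamma\in\widehat G$. Restricting to $h\in H$ gives $f(T^h x)=\gamma(h)f(x)$, so $f$ is a $T^{|H}$-eigenfunction for the restricted character $\gamma|_H\in\widehat H$. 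The one point requiring care is that $f$ remains a \emph{non-constant} eigenfunction for $T^{|H}$: this is immediate since $f$ is literally the same $L^2$ function and is non-constant by hypothesis, independent of which subgroup is acting.

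Thus in both cases a witness to non-ergodicity (resp.\ non-weak-mixing) of $T$ restricts directly to a witness for $T^{|H}$, giving the contrapositive: if $T^{|H}$ is ergodic (resp.\ weak mixing), then so is $T$. Equivalently, in the language of eigenvalues, restriction simply composes an eigenfunction's character with the inclusion $H\hookrightarrow G$, so $\Sigma_{T^{|H}}\supseteq\{\gamma|_H:\gamma\in\Sigma_T\}$, and any non-trivial eigenfunction for $T$ survives as a non-trivial eigenfunction for $T^{|H}$.

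I do not expect a genuine obstacle here; this is the ``straightforward observation'' flagged before the statement, and the proof is essentially unwinding definitions. The only mildly subtle point, worth a remark but not real difficulty, is that the implication goes from the subgroup action to the whole action and \emph{not} conversely: a function invariant (or eigen-) under all $h\in H$ need not be invariant under $G$, which is exactly why restricting to proper subgroups can create new non-ergodic or non-weak-mixing directions. This one-directional nature is precisely what motivates the directional theory developed in the rest of the paper.
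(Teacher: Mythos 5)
Your proof is correct, and it is exactly the intended argument: the paper states this as ``a straightforward observation'' and omits the proof, which is precisely the contrapositive unwinding you give (a non-constant invariant function or eigenfunction for $T$ restricts to one for $T^{|H}$ with character $\gamma|_H$). Nothing further is needed.
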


So ergodicity passes to supergroup actions.
In particular, if $T$ is not ergodic (or weak mixing) then no restriction $T^{|H}$ of $T$ can be ergodic
(or weak mixing).  
The next few examples 
show that the converse is not generally true.   

\begin{example}\label{prodtype}
Let $T_1$ and $T_2$ be measure preserving $\Z$-actions on  
$(X,\mu)$. Define the {\em product type} 
$\Z^2$-action $T_1\otimes T_2$ by 
\[
(T_1\otimes T_2)^{\vec n}(x_1,x_2)=(T_1^{n_1}x_1,T_2^{n_2}x_2),
\]
where $\vec n=(n_1,n_2)$. Let $\Lambda_1=\{(n_1,0):n_1\in\Z\}$ and 
$\Lambda_2=\{(0,n_2):n_2\in\Z\}$ be the vertical and horizontal 
subgroups of $\Z^2$. Clearly the restrictions 
$T_1^{|\Lambda_1}$ and  $T_2^{|\Lambda_2}$
are not ergodic since they correspond to the 
$\Z$-actions $T_1\times Id$ and $Id\times T_2$.
However, it is easy to see that $T_1\otimes T_2$  is ergodic 
(or weak mixing) if and only if  $T_1$ and $T_2$ are ergodic (or weak mixing).
\end{example}

\begin{example}\label{ex:BWtype}
The following example, Bergelson and Ward \cite{BWEx}, is a 
weak mixing $\Z^2$-action $\mathring{T}$ so that $\mathring{T}^{|\Lambda}$ is not ergodic for any subgroup 
$\Lambda=\{k\vec n:k\in\mathbb Z\}$, for $\vn\in\Z^2\backslash\{\vec 0\}$.  

For a 
weak mixing $\Z$-action $T$ on $(X,\mu)$,
let $(\mathring{X}, \mathring{\mu})=\prod_{i=1}^\infty (X,\mu)$, and for 
an enumeration $\{\vec m_i\}_{i\in\Z}$
of $\Z^2\backslash\{\vec 0\}$, define a $\Z^2$-action
$\mathring{T}$ on $\mathring{X}$ by  
\[
\mathring{T}^{\vec n}\left(\prod_{i=1}^\infty x_i\right)=\prod_{i=1}^\infty T^{\vec n\cdot\vec m_i}(x_i).
\]
Note that $\mathring{T}$ is weak mixing as in 
Example~\ref{prodtype}, but
for each subgroup $\Lambda$ the 
transformation $\mathring{T}^{\vn}$ is not ergodic because it acts 
as the identity on factor 
with $\vec m_i\perp\vn$. 
In the terminology of Section~\ref{s_zd},  $T$  every completely rational direction is a is non ergodic direction.
One can show, moreover,  that  
 $T$ is weak in every irrational direction $L$.
 For other similar examples, see Example~\ref{gbw}
\end{example}

Here is a simple way that 
ergodicity or weak mixing can fail for subgroup actions.

\begin{prop}\label{p:easyobs}
Let $T$ be a $G$-action that is not weak mixing, and let  
$\gamma\in\Sigma_T\backslash\{0\}$.  If there exists a subgroup $H\subseteq G$ so that $\gamma\in H^\perp$ then 
$T^{|H}$ is not ergodic.  
\end{prop}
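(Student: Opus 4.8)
Proposition~\ref{p:easyobs} asks me to show that if $T$ is not weak mixing, $\gamma\in\Sigma_T\setminus\{0\}$, and there exists a closed subgroup $H\subseteq G$ with $\gamma\in H^\perp$, then the restriction $T^{|H}$ fails to be ergodic. The natural route is to produce a nonconstant invariant function for $T^{|H}$ directly from the eigenfunction for $\gamma$. Since $\gamma$ is an eigenvalue of the full action $T$, Lemma~\ref{hasatom} (or the definition of eigenvalue) gives a nonzero $f\in L^2(X,\mu)$ with $f(T^g x)=\gamma(g)f(x)$ for all $g\in G$ and $\mu$-a.e.\ $x$.

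**The key computation.** I would restrict attention to $g=h\in H$. The defining condition $\gamma\in H^\perp$ means precisely that $\gamma(h)=1$ for every $h\in H$. Hence for all $h\in H$,
\[
f(T^{h}x)=\gamma(h)f(x)=f(x)\quad\mu\text{-a.e.},
\]
so $f$ is $T^{|H}$-invariant. It remains to see that $f$ witnesses non-ergodicity, i.e.\ that $f$ is not (a.e.\ equal to) a constant. This is where I use $\gamma\neq 0$: if $f$ were constant, then $f(T^g x)=f(x)=\gamma(g)f(x)$ would force $\gamma(g)=1$ for all $g$ (since $f\neq 0$), making $\gamma$ the trivial character $0\in\widehat G$, contradicting $\gamma\in\Sigma_T\setminus\{0\}$. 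Therefore $f$ is a nonconstant $T^{|H}$-invariant function, and $T^{|H}$ is not ergodic.

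**On obstacles.** There is essentially no hard step here; the statement is a direct unwinding of the definitions of eigenvalue, annihilator, and ergodicity. The only point requiring a moment's care is the final non-triviality argument, ensuring that restricting to $H$ does not collapse the eigenfunction to a constant---and this is exactly what the hypothesis $\gamma\neq 0$ prevents. One could alternatively phrase the whole argument spectrally, observing that the spectral measure $\sigma_f$ of such an eigenfunction is the point mass $\delta_\gamma$, and that its pushforward under the projection $\pi:\widehat G\to\widehat H=\widehat G/H^\perp$ is the atom $\delta_{\pi(\gamma)}=\delta_0$, so $T^{|H}$ has an eigenfunction at $0\in\widehat H$ lying outside the constants; but the direct invariant-function argument above is cleaner and self-contained.
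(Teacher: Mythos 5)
Your proof is correct and follows essentially the same route as the paper: take an eigenfunction $f$ for $\gamma$, use $\gamma\in H^\perp$ to conclude $f(T^h x)=f(x)$ for all $h\in H$, and observe that $f$ is a nonconstant invariant function for $T^{|H}$. You even make explicit the non-constancy check (via $\gamma\neq 0$) that the paper leaves implicit, which is a welcome bit of added care.
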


\begin{proof}
Let $f$ be an eigenfunction for $\gamma$.  Then $\gamma\in H^\perp$ means 
$\gamma(h)=1$ for all $h\in H$, which implies  $f(T^h(x))=f(x)$
for all $h\in H$. Thus  $f$ is an invariant function for $T^{|H}$.
The second statement is just the observation at the beginning 
of the section: if  
$T^{|H}$  is weak mixing then $T$ is also weak mixing.
\end{proof} 

\begin{example}
If  $G=\R^d$ than a subgroup $H$ as in Proposition~\ref{p:easyobs} always exists:  if 
$\vec\ell\in\Sigma_T\backslash\{0\}
$ then $H=\langle\vec \ell\rangle^\perp$. 
The situation is different for $G=\Z^d$.  Consider the 
case of $d$ rotations on the circle $\mathbb T$ by rationally independent irrationals, which   
since they commute, define an ergodic 
$\mathbb Z^d$-action $T$ on $\mathbb T$.  
The eigenvalues $\Sigma_T$ satisfy
$\mathbb Z^d\cap\Sigma_T=\{\vec 0\}$ so for any subgroup$H\subseteq\Z^d$, one has 
 $H^\perp\cap\Sigma_T=\{\vec 0\}$.   
Thus all subgroup actions are ergodic.  
Contrast this with the case of 
$\mathbb Z^d$ odometer actions  $T$ (see \cite{Cortez}), all of which satisfy 
$\Sigma_T\subseteq\mathbb Q^d/\mathbb Z^d\subseteq\mathbb T^d$.  
Thus 
there are always subgroups $H\subseteq\mathbb Z^d$ such that $H^\perp\cap\Sigma_T\neq\{\vec 0\}$, so 
there are always non-ergodic subgroup actions.  
\end{example}
 
To go beyond  Proposition~\ref{p:easyobs}
we need to describe the relationship between the spectral measures of $T$ and those of $T^{|H}$.  
We do this here in slightly greater generality that will be useful later.
\begin{prop}\label{p:genrest1}
Let $T$ be a $G$-action on $(X,\mu)$, 
with Koopman representation $U_T$. Let ${U_T|}_{\mathcal F}$ be the restriction of $U_T$ to a closed $U_T$ invariant subspace $\mathcal F\subseteq L^2(X,\mu)$.
For a closed subgroup $H\subseteq G$ let ${U_T|}_{\mathcal F}^H:=(U_{T^{|H}})|_{\mathcal F}$ be 
the unitary representation of $H$ 
on $\mathcal F$ obtained by restricting ${U_T|}_{\mathcal F}$ to $H$. 
If $f_0\in \mathcal F$ 
is a function of maximal spectral type for  ${U_T|}_{\mathcal F}$ 
then $f_0$ is also a function of maximal spectral type for 
${U_T|}_{\mathcal F}^H$. Moreover, 
$\sigma^{T^{|H}}_{f_0}=\sigma^T_{f_0}\circ\pi^{-1}$ 
where $\pi:\widehat G\rightarrow\widehat G/H^\perp=\widehat H$ is canonical projection, and 
$\sigma^T_{f_0}$ on $\widehat G$ and $\sigma^{T^{|H}}_{f_0}$ 
on $\widehat H$ are the corresponding spectral measures.  
\end{prop}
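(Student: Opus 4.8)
The plan is to first establish the pushforward formula $\sigma^{T^{|H}}_f = \sigma^T_f \circ \pi^{-1}$ for \emph{every} $f \in \mathcal F$, and then to deduce the maximal spectral type assertion as a formal consequence of the fact that pushforward preserves absolute continuity. (This reverses the order of the two claims in the statement, but it is the logically efficient route.) The entire argument rests on Bochner's theorem (\ref{smgen}) together with the uniqueness of the representing measure, so I would never need to exhibit any eigenfunctions or spectral decompositions explicitly.

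First I would fix $f \in \mathcal F$ and compare correlation functions. The correlation function of $f$ for the restricted representation $U_T|_{\mathcal F}^H$ is, by definition, $\varphi^{H}_f(h) = (U_T^h f, f)$ for $h \in H$, which is simply the restriction to $H$ of the correlation function $\varphi_f$ of $f$ for $U_T$. The heart of the matter is the duality identity: under the identification $\widehat H = \widehat G / H^\perp$, the canonical projection $\pi$ is precisely the map sending a character $\gamma \in \widehat G$ to its restriction $\gamma|_H$, so that $\gamma(h) = \pi(\gamma)(h)$ for all $h \in H$. Feeding this into the change-of-variables formula for the pushforward measure $\sigma^T_f \circ \pi^{-1}$ (a Borel measure on $\widehat H$, since $\pi$ is continuous hence Borel) gives, for $h \in H$,
\[
\int_{\widehat H} \overline{\chi(h)}\, d(\sigma^T_f \circ \pi^{-1})(\chi)
= \int_{\widehat G} \overline{\pi(\gamma)(h)}\, d\sigma^T_f(\gamma)
= \int_{\widehat G} \overline{\gamma(h)}\, d\sigma^T_f(\gamma)
= \varphi_f(h) = \varphi^H_f(h).
\]
Thus $\sigma^T_f \circ \pi^{-1}$ has $\varphi^H_f$ as its Fourier transform on $H$; by the uniqueness clause of Bochner's theorem applied on the group $H$, it must coincide with $\sigma^{T^{|H}}_f$, which proves the formula.

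With the formula in hand for all $f \in \mathcal F$, I would close with the absolute-continuity step. Since $f_0$ is of maximal spectral type for $U_T|_{\mathcal F}$ (Proposition~\ref{sst}), we have $\sigma^T_f \ll \sigma^T_{f_0}$ for every $f \in \mathcal F$. Pushing forward under $\pi$ preserves this: if $A \subseteq \widehat H$ is Borel with $(\sigma^T_{f_0} \circ \pi^{-1})(A) = \sigma^T_{f_0}(\pi^{-1}(A)) = 0$, then $\sigma^T_f(\pi^{-1}(A)) = 0$ as well, whence $(\sigma^T_f \circ \pi^{-1})(A) = 0$. Combined with the formula this yields $\sigma^{T^{|H}}_f \ll \sigma^{T^{|H}}_{f_0}$ for all $f \in \mathcal F$, which is exactly the statement that $f_0$ is a function of maximal spectral type for $U_T|_{\mathcal F}^H$.

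The argument is essentially bookkeeping once the right identity is isolated, so I do not anticipate a serious obstacle; the one point that genuinely needs care is the duality identity $\gamma(h) = \pi(\gamma)(h)$, i.e. correctly matching the abstract projection $\pi:\widehat G \to \widehat G/H^\perp$ with the concrete operation of restricting characters to $H$, together with checking that $\pi$ is Borel so that the pushforward is a legitimate Borel measure. Everything else is a direct appeal to the uniqueness in Bochner's theorem and the elementary fact that preimages of null sets remain null under absolute continuity.
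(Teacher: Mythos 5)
Your proposal is correct and follows essentially the same route as the paper's proof: both first establish the pushforward identity $\sigma_f^{T^{|H}}=\sigma_f^T\circ\pi^{-1}$ for every $f\in\mathcal F$ via the observation that characters of $\widehat G$ restricted to $H$ factor through $\widehat G/H^\perp$ (Bochner uniqueness), and then deduce maximality of $f_0$ for the restricted representation by pulling a $\sigma_{f_0}^{T^{|H}}$-null set back under $\pi$ and invoking $\sigma_f^T\ll\sigma_{f_0}^T$. The reordering you mention is in fact the order the paper uses as well.
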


\begin{proof}
Note that for all $h\in H$ and $\gamma\in\widehat G$, $\gamma(h)$ is constant on 
the cosets of $H^\perp$ in $\widehat G$. Let $f
\in \mathcal F$.  By \eqref{smgen} we have
\begin{align*}
\widehat {\sigma_f^T}(h) &=\int_{\widehat G} \overline {\gamma(h)}\, d\sigma_f^T(\gamma) =\int_{\widehat G} \overline {\left(\gamma+ H^\perp\right) (h)}\, d\sigma_f^T(\gamma)\\
&=\int_{\widehat G/H^\perp} \overline {\left(\gamma+ H^\perp\right) (h)}\, d(\sigma_f^T\circ \pi^{-1})(\gamma + H^\perp).
\end{align*}
That is,
\[
\widehat {\sigma_f^T}(h) =
\int_{\widehat G/H^\perp} \overline {\left(\gamma+ H^\perp\right)(h)}\, d(\sigma_f^T\circ \pi^{-1})(\gamma + H^\perp)= 
\int_{\widehat H} \overline {\alpha (h)}\, d(\sigma_f^T\circ \pi^{-1})(\alpha),\]where $\alpha\in\gamma+H^\perp$.
On the other hand
\[\widehat {\sigma_f^T}(h) = \int_X f(T^{h^{-1}}x)\overline {f(x)}\, dm(x) = \widehat {\sigma_f^{T^{|H}}}(h) = \int_{\widehat H} \overline {\alpha(h)}\, d\sigma_f^{T^{|H}}(\alpha).\] 
Hence, for any $f\in \mathcal F$, $\sigma_f^{T^{|H}}$ is the {\em push forward} of $\sigma_f^T$; that is,
$\sigma_f^{T^{|H}} = \sigma_f^T\circ \pi^{-1}.$

Now suppose that $\sigma_{f_0}^T$ is a measure maximal spectral type for 
$T$ on $\mathcal F$. 
Let $f\in \mathcal F$ and suppose 
$\sigma_{f_0}^{T^{|H}}(E)=0$ for some 
$E\subseteq\widehat H$. 
Then by our discussion above 
$\sigma_{f_0}^{T}\left(\pi^{-1}(E)\right)=0$.
Since $f_0$ is a function of maximal spectral type for $T$ on $\mathcal F$, 
it follows that 
$0=\sigma_f^{T}\left(\pi^{-1}(E)\right)=\sigma_f^{T^{|H}}(E)$, so 
$\sigma_{f_0}^{T^{|H}}$ is a measure of  
maximal spectral type of $T^{|H}$
on $\mathcal F$.   
\end{proof}

\begin{cor}\label{p:genrest}
 Let $\sigma^T$ and $\sigma_0^T$ 
denote the spectral and reduced spectral measures for 
 a $G$-action $T$. For a closed subgroup  $H\subseteq G$ let $\sigma^{T^{|H}}$ and 
$\sigma_0^{T^{|H}}$ be the spectral and reduced spectral measures for 
 the restriction $T^{|H}$.
Then $\sigma^{T^{|H}}=\sigma^T\circ\pi^{-1}$ and $\sigma_0^{T^{|H}}=\sigma_0^T\circ\pi^{-1}$.
\end{cor}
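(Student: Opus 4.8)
The plan is to derive both identities as immediate applications of Proposition~\ref{p:genrest1} for two different choices of the invariant subspace $\mathcal F$. For the first identity I would take $\mathcal F = L^2(X,\mu)$, which is trivially $U_T$-invariant. Let $f_*\in L^2(X,\mu)$ be a function of maximal spectral type for $U_T$, so that by definition $\sigma^T=\sigma_{f_*}^T$. Proposition~\ref{p:genrest1} then delivers two facts at once: $f_*$ remains a function of maximal spectral type for the restricted representation ${U_T|}_{L^2(X,\mu)}^H=(U_{T^{|H}})|_{L^2(X,\mu)}$, so that $\sigma^{T^{|H}}=\sigma_{f_*}^{T^{|H}}$; and the two spectral measures of $f_*$ are related by push-forward, $\sigma_{f_*}^{T^{|H}}=\sigma_{f_*}^T\circ\pi^{-1}$. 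Combining these yields $\sigma^{T^{|H}}=\sigma^T\circ\pi^{-1}$.

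For the reduced spectral measures I would run the same argument with $\mathcal F = L^2_0(X,\mu)=\{1\}^\perp$. The one preliminary point to record is that this subspace is both $U_T$-invariant and the correct ambient space for the two actions: since the constant functions are fixed by every $U_T^g$, their orthogonal complement $L^2_0(X,\mu)$ is $U_T$-invariant, and as this holds for all $g\in G$ it holds in particular for every $h\in H$, so $L^2_0(X,\mu)$ is exactly the reduced space for $T^{|H}$ as well. Taking $f_0\in L^2_0(X,\mu)$ of maximal spectral type for ${U_T|}_{L^2_0(X,\mu)}$, so $\sigma_0^T=\sigma_{f_0}^T$, Proposition~\ref{p:genrest1} again gives that $f_0$ is of maximal spectral type for the restriction to $H$ and that $\sigma_{f_0}^{T^{|H}}=\sigma_{f_0}^T\circ\pi^{-1}$, whence $\sigma_0^{T^{|H}}=\sigma_0^T\circ\pi^{-1}$.

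There is essentially no hard step here: the corollary is simply a packaging of Proposition~\ref{p:genrest1} for the two canonical choices $\mathcal F=L^2(X,\mu)$ and $\mathcal F=L^2_0(X,\mu)$. The only place that demands a moment's care is the consistency check just noted, namely that the same function of maximal spectral type works after restricting to $H$ (this is part of the conclusion of the proposition, and is not to be reproved) and that $L^2_0(X,\mu)$ genuinely serves as the reduced space for both $T$ and $T^{|H}$. Once those observations are in place, both equalities follow at once.
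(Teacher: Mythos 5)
Your argument is correct and is exactly the route the paper intends: the corollary is stated without proof precisely because it is the specialization of Proposition~\ref{p:genrest1} to $\mathcal F=L^2(X,\mu)$ and $\mathcal F=L^2_0(X,\mu)$, using both conclusions of that proposition (the persistence of the function of maximal spectral type under restriction to $H$, and the push-forward formula). Your added check that $L^2_0(X,\mu)$ is $U_T$-invariant and serves as the reduced space for $T^{|H}$ as well is the right small point to record.
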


Now we can strengthen Proposition~\ref{p:easyobs}.

\begin{prop}\label{p:subgroupwall}
Let $T$ be a $G$-action, $H\subseteq G$ a closed subgroup, and $T^{|H}$ the restriction of $T$ to $H$.
Then $\gamma=\nu+H^\perp\in \widehat H=\widehat G/H^\perp$
is an eigenvalue for $T^{|H}$  for a non-constant eigenfunction if and only if 
$\sigma_0^T(\nu+H^\perp)>0$. 
\end{prop}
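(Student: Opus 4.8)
The plan is to deduce the statement by combining a reduced-spectral-measure refinement of Lemma~\ref{hasatom} with the push-forward formula of Corollary~\ref{p:genrest}. Write $S:=T^{|H}$ and let $\sigma_0^S:=\sigma_0^{T^{|H}}$ be its reduced spectral measure, i.e.\ the measure of maximal spectral type for $U_S|_{L^2_0(X,\mu)}$ furnished by Proposition~\ref{sst} with $\mathcal F=L^2_0(X,\mu)$. The whole proof reduces to two facts: (i) $\gamma\in\widehat H$ is an eigenvalue of $S$ with a non-constant eigenfunction if and only if $\sigma_0^S(\{\gamma\})>0$, and (ii) $\sigma_0^S=\sigma_0^T\circ\pi^{-1}$.

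First I would establish (i), the analogue of Lemma~\ref{hasatom} for the reduced measure. The engine is the same computation underlying Lemma~\ref{hasatom}: using \eqref{newlabel}, for any $h\in H$ one has
\[
\|U_S^h f-\gamma(h)f\|^2=2\|f\|^2-2\,\mathrm{Re}\!\left(\overline{\gamma(h)}\,\varphi_f(h)\right),
\]
so a nonzero $f$ is an eigenfunction of $S$ for $\gamma$ precisely when the right-hand side vanishes for all $h$, which by \eqref{smgen} and Bochner uniqueness happens exactly when $\sigma_f^S$ is a point mass at $\gamma$ (in the convention of Lemma~\ref{hasatom}). Next I would reconcile ``non-constant eigenfunction'' with ``nonzero eigenfunction in $L^2_0(X,\mu)$'': for $\gamma\neq 0$, measure-preservation gives $\int f\,d\mu=\gamma(h)\int f\,d\mu$, forcing $\int f\,d\mu=0$, so every eigenfunction automatically lies in $L^2_0(X,\mu)$; for $\gamma=0$, subtracting its mean from a non-constant invariant function yields a nonzero invariant function in $L^2_0(X,\mu)$. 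With this reduction, both directions of (i) come from Proposition~\ref{sst} applied to $\mathcal F=L^2_0(X,\mu)$: if such an $f$ exists then $\|f\|^2\delta_\gamma=\sigma_f^S\ll\sigma_0^S$, so $\sigma_0^S(\{\gamma\})>0$; conversely, if $\sigma_0^S(\{\gamma\})>0$ then $\sigma_0^S(\{\gamma\})\delta_\gamma\ll\sigma_0^S$, and the converse clause of Proposition~\ref{sst} produces $f\in L^2_0(X,\mu)$ with exactly this point-mass spectral measure, which by the computation above is a nonzero (hence non-constant) eigenfunction for $\gamma$.

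Second, I would invoke Corollary~\ref{p:genrest}, which gives $\sigma_0^{T^{|H}}=\sigma_0^T\circ\pi^{-1}$ for the canonical projection $\pi:\widehat G\to\widehat G/H^\perp=\widehat H$. Since $\gamma=\nu+H^\perp$ and $\pi^{-1}(\{\gamma\})=\nu+H^\perp$ as a subset of $\widehat G$, this yields $\sigma_0^S(\{\gamma\})=\sigma_0^T(\nu+H^\perp)$. Chaining this with (i) shows that $\gamma$ is an eigenvalue of $T^{|H}$ with a non-constant eigenfunction if and only if $\sigma_0^T(\nu+H^\perp)>0$, which is exactly the claim.

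The delicate point—and the reason the statement is phrased with the reduced measure $\sigma_0^T$ rather than the full $\sigma^T$—is the handling of the constant functions, most visibly at $\gamma=0$. The constant function contributes an atom that $\pi$ pushes to the identity $H^\perp\in\widehat H$, so $\sigma^T$ would always report $\gamma=0$ as an eigenvalue and could not separate the trivial (constant) eigenfunction from a genuine non-constant invariant function witnessing non-ergodicity of $T^{|H}$. Passing to $L^2_0(X,\mu)$, equivalently to $\sigma_0^T$, is precisely what excises this contribution, and verifying that this excision is both necessary and sufficient is the crux of the argument; everything else is bookkeeping with Proposition~\ref{sst} and Corollary~\ref{p:genrest}.
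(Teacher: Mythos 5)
Your proof is correct and follows essentially the same route as the paper's: characterize ``eigenvalue with non-constant eigenfunction'' as an atom of the reduced spectral measure $\sigma_0^{T^{|H}}$ (the reduced-measure version of Lemma~\ref{hasatom}), then apply the push-forward identity $\sigma_0^{T^{|H}}=\sigma_0^T\circ\pi^{-1}$ from Corollary~\ref{p:genrest} to convert the atom at $\gamma$ into $\sigma_0^T(\nu+H^\perp)>0$. The only difference is that you spell out the passage from Lemma~\ref{hasatom} to its $L^2_0$/reduced-measure analogue (the mean-zero reduction and the converse clause of Proposition~\ref{sst}), which the paper leaves implicit.
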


\begin{proof}
By Proposition~\ref{hasatom}, for $\gamma\in\widehat H$ to be an non-constant eigenvalue 
for $T^{|H}$ is equivalent to $\sigma_0^T(\{\gamma\})>0$.
On the other hand by Proposition~\ref{p:genrest} we have
$
0<\sigma_0^{T|^H}(\{\gamma\})
=\sigma_0^T(\pi^{-1}\{\gamma\})=\sigma_0^T(\nu+H^\perp).
$
\end{proof}

\begin{cor}\label{c:subgrp}
Let $T$ be a $G$-action and let $H$ be a closed subgroup.  Then 
\begin{enumerate}
\item $T^{|H}$ is not ergodic if and only if 
$\sigma_0^T(H^\perp)>0$, and
\item  $T^{|H}$ is not weak mixing if and only if 
$\sigma_0^T(\nu+ H^\perp)>0$ for some $\nu\in \widehat G$.  
\end{enumerate}
\end{cor}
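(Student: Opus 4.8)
The plan is to read off both statements directly from Proposition~\ref{p:subgroupwall}, so that the only real work is to translate the dynamical notions of (non-)ergodicity and (non-)weak mixing for the restricted action $T^{|H}$ into statements about eigenvalues in $\widehat H=\widehat G/H^\perp$, and then to match those eigenvalues with cosets of $H^\perp$ in $\widehat G$.

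First I would dispose of ergodicity. By the definition in Section~\ref{defandex}, $T^{|H}$ is ergodic exactly when the constant functions are its only invariant functions, so $T^{|H}$ fails to be ergodic precisely when it admits a non-constant invariant function. An invariant function for $T^{|H}$ is exactly an eigenfunction for the trivial eigenvalue $0\in\widehat H$, which under the identification $\widehat H=\widehat G/H^\perp$ corresponds to the coset $0+H^\perp=H^\perp$. Applying Proposition~\ref{p:subgroupwall} with $\nu=0$ then gives that such a non-constant eigenfunction exists if and only if $\sigma_0^T(H^\perp)>0$, which is statement (1).

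For weak mixing I would argue analogously, but now quantifying over all possible eigenvalues. By definition $T^{|H}$ is weak mixing exactly when the constant functions are its only eigenfunctions, so $T^{|H}$ fails to be weak mixing precisely when there is some $\gamma\in\widehat H$ that is an eigenvalue for a non-constant eigenfunction. Since the canonical projection $\pi:\widehat G\rightarrow\widehat G/H^\perp=\widehat H$ is surjective, every such $\gamma$ can be written as $\gamma=\nu+H^\perp$ for some $\nu\in\widehat G$. Proposition~\ref{p:subgroupwall} translates the existence of a non-constant eigenfunction with eigenvalue $\gamma=\nu+H^\perp$ into the positivity $\sigma_0^T(\nu+H^\perp)>0$; taking the existential quantifier over $\nu\in\widehat G$ on both sides yields statement (2). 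Note that statement (1) is then just the case $\nu=0$, consistent with the fact that non-ergodicity is a special case of non-weak mixing.

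Since this is a corollary, there is no serious obstacle; the argument is essentially bookkeeping about the identification $\widehat H=\widehat G/H^\perp$. The one point that deserves care is the role of the reduced spectral measure $\sigma_0^T$ rather than the full spectral measure $\sigma^T$: because $\sigma_0^T$ is the measure of maximal spectral type on $L^2_0(X,\mu)=\{1\}^\perp$, its mass on a coset $\nu+H^\perp$ records exactly the non-constant eigenfunctions. This is precisely the \emph{non-constant} hypothesis already built into Proposition~\ref{p:subgroupwall}, so using $\sigma_0^T$ automatically excludes the trivial contribution of the constants and no separate argument is needed to rule them out.
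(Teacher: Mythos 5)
Your proposal is correct and matches the paper's intent exactly: the paper states this as an immediate corollary of Proposition~\ref{p:subgroupwall} (with no written proof), and your argument is precisely the bookkeeping — setting $\nu=0$ for ergodicity and quantifying over $\nu\in\widehat G$ for weak mixing — that the paper leaves implicit. Nothing is missing.
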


\begin{rmk}
Pugh and Shub  showed in \cite{Pugh} that an element $T^h$ of a $G$-action 
$T$ is not ergodic if and only if $\sigma_0^T(\{h\}^\perp)>0$.  
For $G=\R^d$ and $\vec h\in\R^d$, define $\Gamma=\{k\vh:n\in\Z\}$.   Then $T^{\vec h}$ is not ergodic 
as a $\mathbb Z$-action if and only if $T^{|\Gamma}$ is not ergodic.  Additionally, since 
$\vec\ell\cdot\vec h\in\mathbb Z$ if and only if $\vec \ell\cdot k\vec h\in\mathbb Z$ 
for all $k\in\mathbb Z$, we have that $\Gamma^\perp=\{h\}^\perp$, so Pugh and Shub's result follows 
from Corollary~\ref{c:subgrp}.    Note, however, 
that  if $L=\{t\vec h:t\in\mathbb R\}$ is the direction generated by $\vec h$, then 
$L^\perp\subseteq \{h\}^\perp$, but  $L^\perp$ does not necessarily lie in the support of $\sigma_0^T$.  Therefore 
$T^{\vec h}$ can fail to be ergodic, even though $T$ is ergodic in the direction $L$.  
\end{rmk}
 
We will use Corollary~\ref{c:subgrp} extensively below, but first we consider a few simple consequences. 
As we noted above, 
when $T^{|H}$ is ergodic (or weak mixing) then $T$ is too. 
Although the converse is generally false, it is true in the following situation, which includes 
the case where $H\subseteq\Z^d$ is a finite index subgroup.  
  
\begin{prop}\label{p:subgroup1}
If $T$ be a weak mixing $G$-action 
and $H\subseteq G$
a closed subgroup, then $T^{|H}$ is weak mixing if and only if $T^{|H}$ is ergodic.
\end{prop}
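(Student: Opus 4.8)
The plan is to prove both implications, with essentially all of the work in showing that ergodicity of $T^{|H}$ forces weak mixing. The forward implication is immediate from the definitions in Section~\ref{defandex}: a weak mixing action has only constant eigenfunctions, and invariant functions are precisely the eigenfunctions for the trivial character $0\in\widehat G$, so weak mixing of $T^{|H}$ gives ergodicity of $T^{|H}$ for free.

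For the substantive direction I would argue by contradiction. Assume $T$ is weak mixing and $T^{|H}$ is ergodic but \emph{not} weak mixing. Then $T^{|H}$ admits a non-constant eigenfunction $f\in L^2(X,\mu)$ for some eigenvalue $\gamma\in\widehat H$; since invariant functions of the ergodic action $T^{|H}$ are constant, necessarily $\gamma\neq 0$. The goal is to promote $f$ to an eigenfunction for the full action $T$, which will contradict weak mixing of $T$.

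The key step uses that an ergodic action has simple eigenvalues (Lemma~\ref{easyergodic}(1)). For any $g\in G$, the translate $U_T^g f=f\circ T^g$ satisfies, for every $h\in H$,
\[
(f\circ T^g)(T^h x)=f(T^h T^g x)=\gamma(h)\,(f\circ T^g)(x),
\]
using that $G$ is abelian, so $T^h T^g=T^g T^h$. Hence $U_T^g f$ is again a $\gamma$-eigenfunction for $T^{|H}$, and by simplicity of $\gamma$ it must be a scalar multiple, $U_T^g f=c(g)f$ with $c(g)\in S^1$. A short check then shows that $g\mapsto c(g)$ is a homomorphism $G\to S^1$ (from $U_T^{g_1+g_2}=U_T^{g_1}U_T^{g_2}$) that is continuous (from strong continuity of $g\mapsto U_T^g f$), so $c\in\widehat G$. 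Thus $f$ is a non-constant eigenfunction for $T$, contradicting that $T$ is weak mixing, and the argument is complete.

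Finally, a word on where the difficulty lies. One might hope to argue purely at the level of spectral measures via Corollary~\ref{c:subgrp}: failure of weak mixing for $T^{|H}$ means $\sigma_0^T(\nu+H^\perp)>0$ for some coset, while failure of ergodicity means $\sigma_0^T(H^\perp)>0$. But weak mixing of $T$ only asserts that $\sigma_0^T$ is non-atomic, and a non-atomic measure can perfectly well charge a nontrivial coset $\nu+H^\perp$ while giving $H^\perp$ measure zero. So non-atomicity alone is insufficient, and the real content is the promotion of the eigenfunction from $H$ to $G$. That promotion is the main obstacle, and it is precisely the simplicity of eigenvalues of the \emph{ergodic} action $T^{|H}$, together with the abelianness of $G$, that makes it succeed.
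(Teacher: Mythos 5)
Your proof is correct and follows essentially the same route as the paper: both arguments promote the $T^{|H}$-eigenfunction $f$ to a $G$-eigenfunction by observing that the $\gamma$-eigenspace is $U_T$-invariant and one-dimensional (simplicity of eigenvalues for the ergodic action $T^{|H}$), so that $U_T^g f=c(g)f$ with $c\in\widehat G$, contradicting weak mixing of $T$. The paper phrases this as showing every $f\in\mathcal F_\gamma$ is constant rather than as a contradiction, but the content is identical; your closing remark on why the spectral-measure route via Corollary~\ref{c:subgrp} does not suffice is a correct and worthwhile observation.
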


\begin{proof}
If $T^{|H}$ is weak mixing then it is ergodic. 
Conversely, suppose $T$ is weak mixing and $T^{|H}$ 
is ergodic.  Let $\gamma\in\widehat H$ be an eigenvalue for $T^{|H}$
and let $\mathcal F_\gamma:=
\{f\in L^2(X,\mu):f\left((T^{|H})^{h} x\right)={\gamma}(h)f(x)\}$
be the eigenspace.
Clearly ${\mathcal F}_{\gamma}$ is closed, and by Lemma~\ref{easyergodic} $1$-dimensional. 
We claim that every $f\in\mathcal F_\gamma$ is constant, which will imply that $\gamma\equiv 1$ (i.e.,
$\gamma=0\in\widehat G$).
Indeed, for $f\in {\mathcal F}_{\gamma}$ we have  
$(f\circ T^{g})(T^h x)=f\left(T^{h}\left(T^g\left(x\right)\right)\right)=
{\gamma}(h)(f\circ T^g)(x)$, so $U_T^g f=f\circ T^g\in{\mathcal F}_{\gamma}$, and thus 
${\mathcal F}_{\gamma}$is $U_T$-invariant. 
It follows that for $f\in\mathcal F_\gamma$, and $g\in G$ there is a constant $\alpha(g)\in \mathbb C$,
$\vert\alpha(g)\vert=1$, such that $f\circ T^g=\alpha(g)f$.   
In addition,
$ \alpha(g_1+g_2)f=f\circ T^{g_1+g_2}=f\circ T^{g_1}\circ T^{g_2}=\alpha(g_1)\alpha(g_2)f$.
Since $g\rightarrow U_T^g$ is weakly continuous, 
$\alpha\in\widehat G$.  In particular, $\alpha$ is an eigenvalue for $T$.  
But $T$ is a weak mixing $G$-action, so $\alpha=1$.  Thus $f$ is constant.  
 \end{proof}

\section{Directional properties for $\R^d$-actions}\label{s_rd}

Consider  an $\R^d$-action $T=\{T^{\vec t}\}_{\vec t\in\R^d}$  on $(X,\mu)$.  
Since every direction $L\in\mathbb G_{e,d}$ corresponds to a subgroup of $\R^d$ it is natural to define directional ergodicity for $T$ in a 
direction $L$ to mean the ergodicity of the subgroup action $T^{|L}$ of $T$ (see Section~\ref{defandex}), and similarly for directional weak mixing. 
Note that even though $L\sim\R^e$, one should not think of $T^{|L}$ as an $\R^e$ action. That would require a choice of basis (see \cite{BL}). 
  
\begin{dfn}\label{dirergwm} 
Let $T$ be an  $\R^d$-action and let $L\in{\G}_{e,d}$ be a direction with $1\le e\le d$.
\begin{enumerate}
\item We say $T$ is {\em ergodic} in the direction $L$ if 
the $L$-action $T^{|L}$ is ergodic.  
We denote the set of ergodic directions for $T$
by $\ce_T\subseteq\G_d$.

\item We say $T$ is {\em weak mixing} in the direction $L$ if 
the $L$-action $T^{|L}$ is weak mixing. 
We denote the set of weak mixing directions for $T$
by $\cw_T\subseteq\G_d$.

\end{enumerate}
\end{dfn}

The following basically restates Proposition~\ref{p:subgroup1} in this case.  

\begin{prop}\label{super1}
If $L,L'\in\G_d$ and $L' \supseteq L$ 
then direction $L$ ergodicity (or weak mixing) for $T$ implies direction $L'$ ergodicity (or weak mixing) for $T$.
In particular, direction $L$ ergodicity (or weak mixing) implies $T$ is  ergodic (or weak mixing).
\end{prop}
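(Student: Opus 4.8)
The plan is to recognize that this proposition is the directional reformulation of the fact that ergodicity and weak mixing pass to supergroup actions, which is exactly Proposition~\ref{supergroups}. The key observation is that since $L'\in\G_d$ is a linear subspace of $\R^d$, it is itself a (locally compact abelian) group, and the restriction $T^{|L'}$ is an $L'$-action on $(X,\mu)$. Because $L\subseteq L'$ and both are linear subspaces, $L$ is a closed subgroup of $L'$. So the inclusion $L\subseteq L'$ is precisely a subgroup--supergroup relationship internal to $L'$, and the proposition will follow by applying the supergroup result to this configuration.

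First I would record the elementary identity $(T^{|L'})^{|L}=T^{|L}$, which holds simply because restricting $T$ to $L'$ and then to $L\subseteq L'$ is the same as restricting $T$ directly to $L$. With this in hand I apply Proposition~\ref{supergroups} to the $L'$-action $S:=T^{|L'}$ and its closed subgroup $L$: if $S^{|L}=T^{|L}$ is ergodic (respectively weak mixing), then $S=T^{|L'}$ is ergodic (respectively weak mixing). By Definition~\ref{dirergwm} this is exactly the statement that direction $L$ ergodicity (weak mixing) for $T$ implies direction $L'$ ergodicity (weak mixing) for $T$. The ``in particular'' clause then follows by taking $L'=\R^d\in\G_{d,d}$, for which $T^{|L'}=T$, so that direction $L$ ergodicity (weak mixing) forces $T$ itself to be ergodic (weak mixing).

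As an alternative I could argue spectrally, which makes the monotonicity transparent. By Corollary~\ref{c:subgrp}, $T^{|L}$ fails to be ergodic iff $\varsigma_0^T(L^\perp)>0$, and $T^{|L'}$ fails iff $\varsigma_0^T((L')^\perp)>0$; the weak mixing criteria replace $L^\perp$ by its translates $\nu+L^\perp$. Since $L\subseteq L'$ forces $(L')^\perp\subseteq L^\perp$, and hence $\nu+(L')^\perp\subseteq\nu+L^\perp$, monotonicity of the finite measure $\varsigma_0^T$ gives $\varsigma_0^T((L')^\perp)\le\varsigma_0^T(L^\perp)$ and likewise for the translates. Thus vanishing of the $L$-quantities forces vanishing of the corresponding $L'$-quantities, which is the contrapositive of the desired implications.

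There is essentially no serious obstacle here; the work is all bookkeeping. The only points requiring care are verifying that $L$ is genuinely a closed subgroup of $L'$ (automatic for linear subspaces), invoking the identity $(T^{|L'})^{|L}=T^{|L}$ cleanly, and checking the nesting $(L')^\perp\subseteq L^\perp$ in the spectral route. The degenerate endpoint $L=\{\vec 0\}$ (the case $e=0$) carries no ergodic direction for a nontrivial action, so the implication is vacuous there and need not be treated separately, while the endpoint $L'=\R^d$ is precisely what yields the final assertion.
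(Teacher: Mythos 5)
Your primary argument is exactly the paper's: the paper offers no proof beyond noting that Proposition~\ref{super1} restates the fact that ergodicity and weak mixing pass from a closed subgroup action to the ambient action (Proposition~\ref{supergroups}), applied to $L$ as a closed subgroup of $L'$ via the identity $(T^{|L'})^{|L}=T^{|L}$, with $L'=\R^d$ giving the final clause. Your alternative spectral route through Corollary~\ref{c:subgrp} and the nesting $(L')^\perp\subseteq L^\perp$ is also correct, but the main argument is essentially identical to the paper's.
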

  
Now we introduce some convenient terminology. 
Note that a nonzero complex valued 
$f\in L^2(X,\mu)$ is an eigenfunction for $T^{|L}$, with eigenvalue $\vec\ell$, if 
$f(T^{\vec t}x)=e^{2\pi i (\vec \ell\cdot \vec t\,)}f(x)$
for all $\vec t\in L$. 
   We call $f$ a {\em direction $L$ eigenfunction for $T$} and 
$\vec \ell\in L$ a {\em directional $L$ eigenvalue} for $T$
(here we implicitly identify $\widehat L$ with $L$
as explained in Section~\ref{s:duals}).
We call $f$ a {\em direction $L$ invariant function} if 
$\vec\ell=\vec 0$. 
Definition~\ref{dirergwm} can now be restated as follows.

\begin{dfn}\label{d:redundant}
Let $T$ be an  $\R^d$-action and let $L\in{\G}_{e,d}$ be a direction with $1\le e\le d$.
\begin{enumerate}
\item We say $T$ is {\em ergodic} in the direction $L$ if 
it has no non-zero direction 
$L$ invariant functions $f$ in $L^2_0(X,\mu)$. 
\item We say $T$ is {\em weak mixing} in the direction $L$ if 
it has no non-zero direction 
$L$ eigenfunctions $f$ in $L^2_0(X,\mu)$ (or equivalently, no direction $L$ eigenvalues,
$\vec \ell\ne\vec 0$\,).
 \end{enumerate}
\end{dfn}

\subsection{Characterizing directional properties spectrally}\label{ss:rdspectrally}
 
\begin{dfn}\label{wall} 
Let $\varsigma$ be a finite Borel measure on $\R^d$. Let  $L\in \G_{e,d}$,
and let $P=L+\vec \ell$ where $\vec\ell\in L^\perp$. 
\begin{enumerate}
\item 
 If  $\varsigma(P)>0$,
 we say that $P$ is an $e$-dimensional {\em wall} for $\varsigma$. 
 
\item A wall $P$ for $\varsigma$ is called {\em proper} if it 
contains no lower dimensional walls $M\subseteq P$ for $\varsigma$. 
 
\item We call a measure $\varsigma$ an $e$-dimensional {\em wall-measure} if there is an 
$e$-dimensional proper wall $P$ for $\varsigma$ that contains its 
 support. We call $P$ the {\em carrier} of $\varsigma$.
  
\end{enumerate}
\end{dfn}

Note that a $0$-dimensional wall measure 
is an atomic measure with a single atom $\vec \ell\in 
\R^d$ as its carrier, and 
a $d$-dimensional wall measure is a (non-zero) measure with carrier
$\R^d$.

\begin{thm}\label{XX}
Let $T$ be an ergodic $\R^d$-action and $L\in \G_d$. 
\begin{enumerate}
\item\label{i:Pugh} 
$L\in \ce_T$ if and only if $L^\perp$ is not a wall for $\varsigma_0^T$.  
\item
$L\in \cw_T$ if and only if no affine subset
$P=L^\perp+\vec \ell$, $\vec \ell\in L$ is a wall for 
$\varsigma_0^T$. In other words, no affine subset perpendicular to $L$
 is a wall for 
$\varsigma_0^T$.
\end{enumerate}
\end{thm}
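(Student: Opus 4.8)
The plan is to derive Theorem~\ref{XX} as a direct specialization of Corollary~\ref{c:subgrp} to the group $G=\R^d$ and the closed subgroup $H=L$, combined with the definition of directional ergodicity and weak mixing (Definition~\ref{dirergwm}). Before invoking that corollary I would first record the identification of the dual-group annihilator of $L$ with the geometric orthogonal complement: for $\vec v\in\widehat{\R^d}=\R^d$, membership $\vec v\in L^\perp$ in the annihilator sense means $e^{2\pi i(\vec v\cdot\vec\ell\,)}=1$ for every $\vec\ell\in L$, and since $L$ is a linear subspace this forces $\vec v\cdot\vec\ell=0$ for all $\vec\ell\in L$. Hence the two meanings of $L^\perp$ coincide, as already observed in Section~\ref{s:duals}, and there is no ambiguity in applying Corollary~\ref{c:subgrp} with $H^\perp=L^\perp$.

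For part (1), Definition~\ref{dirergwm} gives $L\in\ce_T$ iff $T^{|L}$ is ergodic, while Corollary~\ref{c:subgrp}(1) says $T^{|L}$ fails to be ergodic iff $\varsigma_0^T(L^\perp)>0$. Since $L^\perp=L^\perp+\vec 0$ with $\vec 0\in(L^\perp)^\perp=L$, the inequality $\varsigma_0^T(L^\perp)>0$ is precisely the assertion that $L^\perp$ is a wall for $\varsigma_0^T$ in the sense of Definition~\ref{wall}. Taking contrapositives yields part (1).

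For part (2), Definition~\ref{dirergwm} gives $L\in\cw_T$ iff $T^{|L}$ is weak mixing, while Corollary~\ref{c:subgrp}(2) says $T^{|L}$ fails to be weak mixing iff $\varsigma_0^T(\nu+L^\perp)>0$ for some $\nu\in\widehat{\R^d}=\R^d$. The one step that needs care is that this quantifier ranges over all of $\R^d$, whereas the theorem restricts the translate to $\vec\ell\in L$. I would resolve this by noting that $\nu+L^\perp$ depends only on the $L$-component of $\nu$: decomposing $\nu=\vec\ell+\vec m$ with $\vec\ell\in L$ and $\vec m\in L^\perp$ under $\R^d=L\oplus L^\perp$, one has $\nu+L^\perp=\vec\ell+L^\perp$. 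Thus $\{\nu+L^\perp:\nu\in\R^d\}=\{\vec\ell+L^\perp:\vec\ell\in L\}$, so the existence of some $\nu$ with $\varsigma_0^T(\nu+L^\perp)>0$ is equivalent to the existence of a wall $P=L^\perp+\vec\ell$ with $\vec\ell\in L$. Taking contrapositives yields part (2).

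I expect no serious obstacle here, since the analytic content has already been carried by the push-forward formula of Proposition~\ref{p:genrest1} and its Corollary~\ref{c:subgrp}; the theorem is essentially a repackaging of that corollary in the wall terminology of Definition~\ref{wall}. The only genuinely new bookkeeping is the annihilator identification in the first paragraph and the reduction of the weak-mixing translation parameter from $\R^d$ down to $L$ in part (2), both of which are routine consequences of $L$ being a linear subspace.
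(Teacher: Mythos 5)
Your proposal is correct and follows essentially the same route as the paper, which disposes of Theorem~\ref{XX} in one line by observing that it is Proposition~\ref{p:subgroupwall} (equivalently Corollary~\ref{c:subgrp}) translated into the wall terminology of Definition~\ref{wall}. Your two bookkeeping points --- identifying the annihilator of $L$ with its geometric orthogonal complement, and reducing the translation parameter $\nu\in\R^d$ to $\vec\ell\in L$ via $\R^d=L\oplus L^\perp$ --- are exactly the details the paper leaves implicit, and both are handled correctly.
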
 

This is just Proposition~\ref{p:subgroupwall} translated into the terminology of this section. 
Since spectral type is an isomorphism invariant (see the discussion before Lemma~\ref{hasatom}) we have the following immediate corollary.  
 
\begin{cor}
Directional ergodicity and directional weak mixing for $\R^d$-actions $T$ are spectral 
properties (i.e., they depend only on  $\varsigma_0^T$). 
It follows that 
they are isomorphism invariants.  
\end{cor}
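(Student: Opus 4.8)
The plan is to read off both conclusions directly from Theorem~\ref{XX}, combined with the isomorphism invariance of the reduced spectral measure recorded in the discussion preceding Lemma~\ref{hasatom}. First I would fix a direction $L\in\G_d$ and recall that, by Theorem~\ref{XX}, membership of $L$ in $\ce_T$ or $\cw_T$ is expressed entirely through the \emph{wall} condition for $\varsigma_0^T$: one has $L\in\ce_T$ precisely when $\varsigma_0^T(L^\perp)=0$, and $L\in\cw_T$ precisely when $\varsigma_0^T(L^\perp+\vec\ell)=0$ for every $\vec\ell\in L$. In either case the criterion refers only to whether $\varsigma_0^T$ assigns zero measure to a prescribed family of affine subsets of $\R^d$.

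The key observation is that such a criterion is insensitive to the choice of representative within the Radon--Nikodym equivalence class of $\varsigma_0^T$. If $\varsigma$ and $\varsigma'$ are equivalent finite Borel measures on $\R^d$, they share the same null sets, so $\varsigma(P)=0$ if and only if $\varsigma'(P)=0$ for every Borel set $P$; in particular $P$ is a wall for $\varsigma$ exactly when it is a wall for $\varsigma'$. Applying this to the affine subsets appearing in Theorem~\ref{XX}, the sets $\ce_T$ and $\cw_T$ are determined by the equivalence class of $\varsigma_0^T$ alone. This is exactly the assertion that directional ergodicity and directional weak mixing are spectral properties.

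For the isomorphism invariance, I would invoke the fact established before Lemma~\ref{hasatom}: if $T$ and $S$ are metrically isomorphic $\R^d$-actions, then their Koopman representations $U_T$ and $U_S$ are unitarily conjugate, whence $\varsigma_0^T$ and $\varsigma_0^S$ are equivalent. Combining this with the previous paragraph yields $\ce_T=\ce_S$ and $\cw_T=\cw_S$, so the directional properties are preserved under isomorphism.

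The argument is essentially immediate, so there is no real obstacle; the only point requiring (brief) care is conceptual rather than technical, namely to interpret ``spectral property'' as dependence on the equivalence class of $\varsigma_0^T$ rather than on a fixed representative, and to note that the wall conditions are genuinely equivalence-invariant because equivalent measures have identical null sets. Once this is fixed, both halves of the corollary follow at once from Theorem~\ref{XX} and the isomorphism invariance of the reduced spectral type.
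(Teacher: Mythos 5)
Your argument is correct and is exactly the one the paper intends: the corollary is stated as an immediate consequence of Theorem~\ref{XX} together with the isomorphism invariance of the reduced spectral type discussed before Lemma~\ref{hasatom}, and your observation that the wall conditions depend only on the null sets (hence only on the equivalence class) of $\varsigma_0^T$ is precisely the implicit step. No further comment is needed.
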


\section{Directional properties for $\Z^d$-actions}\label{s_zd}
For completely rational 
directions  (see 
Definition~\ref{d:rational}) it makes sense to follow the $\R^d$ case and 
define the directional behavior of $\Z^d$-actions 
as the behavior of the associated subgroup action.  This follows our strategy with $\R^d$-actions.  
In this section we show how to use the unit suspension $\widetilde T$ of a $\Z^d$-action $T$ to extend 
the notions 
of directional ergodicity and weak mixing
to arbitrary directions $L\in\G_{e,d}$ in a way that
agrees with the subgroup action definition when $L$ is completely rational.  
Moreover, these directional properties turn out to be spectral properties of $T$ itself.

\subsection{The unit suspension}
Consider  a $\Z^d$-action $T=\{T^{\vec t}\}_{\vec t\in\R^d}$  on $(X,\mu)$. Let $\lambda$ denote Lebesgue probability measure on  the 
$d$-dimensional torus. $\T^d=[0,1)^d$  
The {\it unit suspension}  $\widetilde T$ of $T$ is the $\R^d$-action on 
$(\widetilde X,\tilde\mu):=(X\times [0,1)^d,\mu\times\lambda)$ defined by
$\widetilde{T}^{\vec t}(x,\vec r\,)=(T^{(\lfloor\vec t+\vec r\rfloor)}x,\{\vec t+\vec r\})$,
where $\lfloor \vec w\rfloor\in\Z^d$ denotes
the component-wise floor, and  $\{\vec w\}:=\vec w-\lfloor\vec w\rfloor\in \T^d$. 
It is well known that  $\widetilde{T}$ 
is ergodic if and only if $T$ is ergodic. However, as the next result shows, it is never a weak mixing 
$\R^d$-action.  

\begin{prop}\label{p:suspratl}
Let $T$ be an ergodic $\Z^d$-action and $\widetilde T$ its unit suspension.  
Define the $\R^d$-rotation action $\{\rho^{\vec t}\,\}_{\vec t\in\R^d}$ on 
$[0,1)^d$ by $\rho^{\vec t}(\vec r\,):=
\{\vec r+\vec t\,\}$.   
Then the projection $p_2:X\times[0,1)^d\rightarrow [0,1)^d$ given by $p_2(x,\vec r)=\vec r$ is a factor 
 map from $\widetilde T$ to $\rho$.   In particular, $\widetilde T$ is not weak mixing and its 
 eigenvalues satisfy $\Sigma_{\widetilde T}\supseteq\Z^d$.
\end{prop}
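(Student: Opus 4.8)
The plan is to verify directly that $p_2$ intertwines $\widetilde T$ with the torus rotation $\rho$, and then to exhibit explicit eigenfunctions of $\rho$ and pull them back through $p_2$ to obtain eigenfunctions of $\widetilde T$ with every integer eigenvalue. The "not weak mixing" and "$\Sigma_{\widetilde T}\supseteq\Z^d$" assertions will then follow immediately, since eigenfunctions lift through factor maps.

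First I would check the three defining properties of a factor map. Measurability of $p_2$ is immediate, and since $\mu$ is a probability measure the pushforward satisfies $(p_2)_*(\mu\times\lambda)=\lambda$, so $p_2$ is measure preserving. For equivariance I compute, for every $\vec t\in\R^d$,
\[
p_2\bigl(\widetilde T^{\vec t}(x,\vec r\,)\bigr)=p_2\bigl(T^{\lfloor\vec t+\vec r\rfloor}x,\{\vec t+\vec r\}\bigr)=\{\vec t+\vec r\}=\rho^{\vec t}(\vec r\,)=\rho^{\vec t}\bigl(p_2(x,\vec r\,)\bigr),
\]
which holds identically: because $p_2$ simply discards the $X$-coordinate, no subtlety involving the floor or fractional part survives. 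This establishes that $p_2$ is a factor map from $\widetilde T$ to $\rho$.

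Next I would produce eigenfunctions for $\rho$ and lift them. For each $\vec n\in\Z^d$ set $\chi_{\vec n}(\vec r\,):=e^{2\pi i(\vec n\cdot\vec r\,)}$ on $[0,1)^d$. Because $\vec n\in\Z^d$ and $\lfloor\vec t+\vec r\rfloor\in\Z^d$, one has $e^{2\pi i(\vec n\cdot\lfloor\vec t+\vec r\rfloor)}=1$, whence
\[
\chi_{\vec n}\bigl(\rho^{\vec t}(\vec r\,)\bigr)=e^{2\pi i(\vec n\cdot\{\vec t+\vec r\})}=e^{2\pi i(\vec n\cdot(\vec t+\vec r\,))}=e^{2\pi i(\vec n\cdot\vec t\,)}\,\chi_{\vec n}(\vec r\,),
\]
so $\chi_{\vec n}$ is an eigenfunction for $\rho$ with eigenvalue $\vec n$, under the identification $\widehat{\R^d}=\R^d$ of \eqref{char}. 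Then $\chi_{\vec n}\circ p_2\in L^2(\widetilde X,\widetilde\mu)$, and by the equivariance just established it satisfies $(\chi_{\vec n}\circ p_2)\circ\widetilde T^{\vec t}=e^{2\pi i(\vec n\cdot\vec t\,)}\,(\chi_{\vec n}\circ p_2)$, i.e. it is an eigenfunction of $\widetilde T$ with the same eigenvalue $\vec n$. Since $\chi_{\vec n}\circ p_2$ is nonconstant for $\vec n\ne\vec 0$, the action $\widetilde T$ has nonconstant eigenfunctions and so is not weak mixing; letting $\vec n$ range over $\Z^d$ gives $\Z^d\subseteq\Sigma_{\widetilde T}$.

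There is no genuine obstacle here: the argument is a direct verification combined with the standard fact that eigenfunctions pull back through factor maps. The only point requiring a moment's care is the cancellation of the integer part in the exponential, which is precisely what restricts the lifted eigenvalues to the integer characters $\vec n\in\Z^d$ rather than to arbitrary frequencies on the torus.
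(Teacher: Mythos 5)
Your proof is correct and follows essentially the same route as the paper's: a direct verification that $\rho^{\vec t}\circ p_2=p_2\circ\widetilde T^{\vec t}$, followed by exhibiting $e^{2\pi i(\vec n\cdot\vec r\,)}$ as an eigenfunction of $\widetilde T$ with eigenvalue $\vec n\in\Z^d$. You simply spell out a few steps (measure preservation, cancellation of the integer part, nonconstancy) that the paper leaves implicit.
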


\begin{proof}
Clearly $\rho^{\vec t}\circ p_2=p_2\circ\widetilde T^{\vec t}$. 
Also  $f(x,\vec r)=e^{2\pi i (\vec n\cdot \vec r\,)}$ is an eigenfunction for $\widetilde T$ with eigenvalue 
$\vec n\in\Z^d\subseteq\R^d$.  
\end{proof}

This rotation factor is responsible for the presence of many non-ergodic directions for $\widetilde T$. 

\begin{prop}\label{p:baddir}
If $L\in\G_{d-1,d}$ is such that $L^\perp\cap\Z^d\neq\{\vec 0\}$ then $L\in\ce_{\widetilde T}^c$.
\end{prop}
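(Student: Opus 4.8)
The plan is to apply the spectral characterization of directional ergodicity for $\R^d$-actions from Theorem~\ref{XX}(\ref{i:Pugh}). Since $T$ is ergodic, so is its unit suspension $\widetilde T$ (as noted before Proposition~\ref{p:suspratl}), so $\widetilde T$ is an ergodic $\R^d$-action and the theorem applies: $L\in\ce_{\widetilde T}$ if and only if $L^\perp$ is not a wall for the reduced spectral measure $\varsigma_0^{\widetilde T}$. Equivalently, by Corollary~\ref{c:subgrp}(1), $\widetilde T^{|L}$ fails to be ergodic precisely when $\varsigma_0^{\widetilde T}(L^\perp)>0$. It therefore suffices to exhibit positive reduced spectral mass on the line $L^\perp$ (recall $L^\perp$ is $1$-dimensional since $L\in\G_{d-1,d}$).

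First I would use the hypothesis $L^\perp\cap\Z^d\neq\{\vec 0\}$ to choose a nonzero integer vector $\vec n\in L^\perp\cap\Z^d$. By Proposition~\ref{p:suspratl}, the function $f(x,\vec r\,)=e^{2\pi i(\vec n\cdot\vec r\,)}$ is an eigenfunction of $\widetilde T$ with eigenvalue $\vec n\in\Z^d\subseteq\R^d$; because $\vec n\neq\vec 0$ this eigenfunction is non-constant and lies in $L^2_0(\widetilde X,\tilde\mu)$. Hence $\vec n$ is a nonzero point of $\Sigma_{\widetilde T}$.

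Next I would translate this into a statement about $\varsigma_0^{\widetilde T}$. By Lemma~\ref{hasatom}, $\vec n\in\Sigma_{\widetilde T}$ gives $\varsigma^{\widetilde T}(\{\vec n\})>0$, and by Corollary~\ref{c:noatomatzero}(\ref{tosszero}) we have $\varsigma_0^{\widetilde T}=\varsigma^{\widetilde T}-\varsigma^{\widetilde T}(\{0\})\delta_0$, so for $\vec n\neq\vec 0$ the atom survives: $\varsigma_0^{\widetilde T}(\{\vec n\})=\varsigma^{\widetilde T}(\{\vec n\})>0$. Since $\vec n\in L^\perp$, monotonicity yields $\varsigma_0^{\widetilde T}(L^\perp)\geq\varsigma_0^{\widetilde T}(\{\vec n\})>0$, so $L^\perp$ is a wall for $\varsigma_0^{\widetilde T}$. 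By Theorem~\ref{XX}(\ref{i:Pugh}) this gives $L\notin\ce_{\widetilde T}$, that is, $L\in\ce_{\widetilde T}^c$.

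The argument is short once the spectral machinery of Sections~\ref{defandex}--\ref{s_rd} is in place, and there is no genuine obstacle beyond correctly invoking the rotation factor of Proposition~\ref{p:suspratl}. The one point deserving care is that the contributing eigenfunction must be \emph{non-constant}, so that its eigenvalue registers in the \emph{reduced} spectral measure rather than being absorbed into the atom at $\vec 0$; this is exactly where the hypothesis $L^\perp\cap\Z^d\neq\{\vec 0\}$ is used, as opposed to the trivial containment $\vec 0\in L^\perp\cap\Z^d$ which always holds.
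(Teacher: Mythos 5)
Your argument is correct and is in substance the same as the paper's: both hinge on Proposition~\ref{p:suspratl} supplying a nonzero $\vec n\in L^\perp\cap\Z^d$ as an eigenvalue of $\widetilde T$ with eigenfunction $e^{2\pi i(\vec n\cdot\vec r\,)}$. The paper concludes in one line via Proposition~\ref{p:easyobs} (that eigenfunction is $\widetilde T^{|L}$-invariant because $\vec n\in L^\perp$), whereas you route the identical fact through the atom/wall language of Lemma~\ref{hasatom} and Theorem~\ref{XX}; the two formulations are equivalent by Corollary~\ref{c:subgrp}, so this is a difference of packaging, not of method.
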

\begin{proof}
This follows directly from Propositions~\ref{p:suspratl} and~\ref{p:easyobs}.
\end{proof}

\subsection{Definition of directional properties}  
The non-ergodic directions for $\widetilde T$, 
identified in Proposition~\ref{p:baddir}, mean that we cannot define directional properties of $T$ by simply applying 
Definition~\ref{dirergwm} directly to $\widetilde T$. 
However, since this directional non-ergodicity is a result only of the rotation factor $\rho$, 
it has no connection to the dynamical properties of $T$. To overcome this complication,
 we define a closed 
$\widetilde T$ invariant subspace 
\[
L^2_{\mathcal O}(\widetilde X,\tilde\mu):=\mathcal O^\perp\subseteq L^2(\widetilde X,\tilde\mu),
\]
where
\begin{equation}\label{e:defineo1}
\mathcal O:=\left\{F(x,\vec s\,)=f(\vec s\,):f\in L^2\left([0,1)^d,\lambda\right)\right\}\subseteq L^2(\widetilde X,\tilde\mu).
\end{equation}
The next definition defines directional properties for $\Z^d$-actions in analogy with Definition~\ref{d:redundant}.
\begin{dfn}\label{Zddef1}
Let $T$ be a  $\Z^d$-action on $(X,\mu)$, let $\widetilde T$ 
its unit suspension, and let $L\in{\G}_{e,d}$ 
with $1\le e\le d$.
\begin{enumerate}
\item We say $T$ is {\em ergodic} in the direction $L$ 
if $\widetilde T$ has no non-zero direction 
$L$ invariant functions in $L^2_\co(X,\mu)$. 
We denote the set of ergodic directions for $T$
by $\ce_T\subseteq\G_d$.
 \item We say $T$ is {\em weak mixing} in the direction $L$ if 
$\widetilde T$ has no non-zero direction 
$L$ eigenfunctions in $L^2_\co(X,\mu)$. 
We denote the set of weak mixing directions for $T$
by $\cw_T\subseteq\G_d$.
\end{enumerate}
 \end{dfn}

Here is the analogue of Proposition~\ref{super1} for the $\Z^d$ case.
 
\begin{prop}\label{super2}
If $L,L'\in\G_d$ and $L' \supseteq L$ 
then direction $L$ ergodicity (or weak mixing) for $T$ implies direction $L'$ ergodicity (or weak mixing) for $T$.
In particular, direction $L$ ergodicity (or weak mixing) implies $T$ is  ergodic (or weak mixing).
\end{prop}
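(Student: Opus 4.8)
The plan is to reduce this monotonicity statement for $\Z^d$-actions, phrased in terms of the unit suspension $\widetilde T$, to the corresponding statement about the $\R^d$-action $\widetilde T$ and its restrictions to nested subspaces, and then invoke the already-established Proposition~\ref{super1} (equivalently Proposition~\ref{p:subgroup1}). The key point that makes this routine is that Definition~\ref{Zddef1} characterizes direction $L$ ergodicity (resp.\ weak mixing) for $T$ as the \emph{absence} of direction $L$ invariant functions (resp.\ eigenfunctions) for $\widetilde T$ lying in the fixed subspace $L^2_\co(\widetilde X,\tilde\mu)=\mathcal O^\perp$, a subspace that does not depend on $L$. So the entire argument is about how eigenfunctions behave as we enlarge the subspace $L$ to $L'\supseteq L$.

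First I would take the contrapositive, as in the proof of Proposition~\ref{p:subgroup1}: suppose $T$ is \emph{not} direction $L'$ ergodic (resp.\ not direction $L'$ weak mixing). By Definition~\ref{Zddef1} this gives a nonzero $F\in L^2_\co(\widetilde X,\tilde\mu)$ that is a direction $L'$ invariant function (resp.\ a direction $L'$ eigenfunction with eigenvalue $\vec\ell'\in L'$) for $\widetilde T$. The second step is the essential observation: since $L\subseteq L'$, any function satisfying the eigenfunction identity $F(\widetilde T^{\vec t}(x,\vec r))=e^{2\pi i(\vec\ell'\cdot\vec t)}F(x,\vec r)$ for all $\vec t\in L'$ in particular satisfies it for all $\vec t\in L$. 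For the ergodicity statement $\vec\ell'=\vec 0$, so $F$ is immediately a direction $L$ invariant function in $L^2_\co$, contradicting direction $L$ ergodicity of $T$. For the weak mixing statement, restricting the eigenvalue character $\vec\ell'\in L'$ to $L$ gives the eigenvalue $\vec\ell:=\pi_L(\vec\ell')$, the orthogonal projection of $\vec\ell'$ onto $L$; then $F$ is a direction $L$ eigenfunction for $\widetilde T$ with eigenvalue $\vec\ell$, still in $L^2_\co$ and still nonzero, contradicting direction $L$ weak mixing of $T$.

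The only point requiring any care — and the closest thing to an obstacle — is confirming that the witness $F$ remains in the \emph{same} subspace $L^2_\co(\widetilde X,\tilde\mu)$ when we reinterpret it as a direction $L$ object, so that the definitions are being applied to the correct Hilbert space. This is immediate here because $\mathcal O$ (defined in \eqref{e:defineo1}) and hence $L^2_\co=\mathcal O^\perp$ are fixed subspaces independent of the direction; membership $F\in L^2_\co$ is a property of $F$ alone and is untouched by passing from $L'$ to $L$. I would note explicitly that $L^2_\co$ is $\widetilde T$-invariant (stated in the text preceding Definition~\ref{Zddef1}), so the restriction of $U_{\widetilde T}$ to $L^2_\co$ and then to the subgroup $L$ is well defined; the monotonicity then follows, just as in the $\R^d$ case, from the trivial fact that an eigenfunction for the larger subgroup $L'$ is an eigenfunction for the smaller subgroup $L$. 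The final sentence of the proposition (taking $L'=\R^d$, or equivalently noting that $T$ ergodic means $\widetilde T$ ergodic off $\mathcal O$) is then the special case $L'=\R^d$, recovering that direction $L$ ergodicity (resp.\ weak mixing) implies $T$ is ergodic (resp.\ weak mixing).
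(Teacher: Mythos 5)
Your proposal is correct and follows essentially the same route as the paper, which proves the proposition by the single observation that any direction $L'$ eigenfunction (or invariant function) for $\widetilde T$ is automatically a direction $L$ eigenfunction when $L\subseteq L'$, with the subspace $L^2_\co(\widetilde X,\tilde\mu)$ unaffected since it does not depend on the direction. Your version merely spells out the contrapositive and the projection of the eigenvalue onto $L$, which the paper leaves implicit.
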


This follows from the fact that any direction $L'$ eigenfunction must also be a direction $L$ eigenfunction.

 \subsection{Characterizing directional properties spectrally}   
 Let $U_{\widetilde T}$ be the Koopman representation of $\R^d$ corresponding to the unit suspension 
$\widetilde T$ of a
$\Z^d$-action $T$.
The subspace $L^2_{\co}(\widetilde X,\tilde\mu)\subseteq L^2(\widetilde X,\tilde\mu)$ is $U_{\widetilde T}$ is closed and invariant,
and we let $U_{\widetilde T}|_{L^2_{\co}}$ be the restriction of $U_{\widetilde T}$ to $L^2_{\co}(\widetilde X,\tilde\mu)$.
Proposition~\ref{sst} implies that there is a measure  
$\varsigma_\co^{\widetilde T}$ of maximal spectral type for $U_{\widetilde T}|_{L^2_{\co}}$, unique up to equivalence. 
We call $\varsigma_\co^{\widetilde T}$ the {\it aperiodic spectral measure of $\widetilde T$}.
We can now characterize the directional properties if $T$, 
(Definition~\ref{Zddef1}),
in terms of $\varsigma_\co^{\widetilde T}$. 

\begin{prop}\label{likeXX}
Let $T$ be an ergodic $\Z^d$-action and $L\in \G_d$. Then 
$L\in \ce_T$ if and only if $L^\perp$ is not a wall for  $\varsigma_\co^{\widetilde T}$, and 
$L\in \cw_T$ if and only if no affine subset
$P=L^\perp+\vec \ell$, $\vec \ell\in L$ is a wall for 
$\varsigma_\co^{\widetilde T}$. 
\end{prop}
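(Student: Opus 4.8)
The plan is to mirror the proof of Theorem~\ref{XX} almost verbatim, simply replacing the reduced spectral measure $\varsigma_0^T$ by the aperiodic spectral measure $\varsigma_\co^{\widetilde T}$ and the subspace $L^2_0$ by $L^2_\co(\widetilde X,\tilde\mu)$. The whole statement should fall out as an application of Proposition~\ref{p:genrest1}, which was stated for the restriction of a Koopman representation to an arbitrary closed invariant subspace $\mathcal F$ precisely so that it can be fed the data $G=\R^d$, $T=\widetilde T$, $\mathcal F=L^2_\co(\widetilde X,\tilde\mu)$, and $H=L$.

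First I would fix a function $f_0\in L^2_\co(\widetilde X,\tilde\mu)$ of maximal spectral type for $U_{\widetilde T}|_{L^2_\co}$, so that $\varsigma_\co^{\widetilde T}=\varsigma_{f_0}^{\widetilde T}$ by definition of the aperiodic spectral measure. Since $L^2_\co$ is $U_{\widetilde T}$-invariant it is in particular invariant under the $L$-restriction, and the representation of $L$ it carries is exactly $(U_{\widetilde T^{|L}})|_{L^2_\co}$. Applying Proposition~\ref{p:genrest1} with $\mathcal F=L^2_\co$ and $H=L$ then tells me that $f_0$ remains of maximal spectral type for this $L$-representation and that its spectral measure is the pushforward $\varsigma_\co^{\widetilde T}\circ\pi^{-1}$, where $\pi:\R^d\to\R^d/L^\perp=\widehat L$ is the canonical projection and we identify $\widehat L$ with $L$ as in Section~\ref{s:duals}.

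Next I would translate Definition~\ref{Zddef1} into an atom condition. A non-zero direction $L$ eigenfunction for $\widetilde T$ lying in $L^2_\co$ with eigenvalue $\vec\ell\in L$ is, by definition, an eigenfunction for the representation $(U_{\widetilde T^{|L}})|_{L^2_\co}$ with eigenvalue $\vec\ell$. The correspondence between eigenvalues and atoms of the measure of maximal spectral type---which for a general unitary representation restricted to an invariant subspace follows from the converse clause of Proposition~\ref{sst}, and specializes to Lemma~\ref{hasatom} for full Koopman representations---then shows that such an eigenfunction exists if and only if $\varsigma_\co^{\widetilde T}\circ\pi^{-1}$ has an atom at $\vec\ell$. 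Unwinding the pushforward, this reads $\varsigma_\co^{\widetilde T}(\pi^{-1}(\vec\ell))=\varsigma_\co^{\widetilde T}(\vec\ell+L^\perp)>0$, i.e.\ that $P=L^\perp+\vec\ell$ is a wall for $\varsigma_\co^{\widetilde T}$. Taking $\vec\ell=\vec 0$ yields the ergodicity assertion, and quantifying over all $\vec\ell\in L$ yields the weak mixing assertion.

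There is no deep obstacle here; the content has been front-loaded into Propositions~\ref{p:genrest1} and~\ref{sst}, whose generality was arranged for exactly this moment. The one point that genuinely requires care is to verify that restricting to $L^2_\co$ is the correct analogue of passing to the reduced measure in the $\R^d$ case. This rests on the observation that the constant function $\mathbf 1$ lies in $\mathcal O$, so $L^2_\co$ contains no non-zero constants; consequently the atoms of $\varsigma_\co^{\widetilde T}$ detect precisely the directional eigenfunctions relevant to Definition~\ref{Zddef1}, and none of the spurious eigenfunctions produced by the suspension rotation factor $\rho$ of Proposition~\ref{p:suspratl}, which live entirely in $\mathcal O$. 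Making that last filtering statement precise, rather than the formal spectral manipulation, is where I would expect to spend the bulk of the effort.
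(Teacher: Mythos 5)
Your proposal is correct and follows essentially the same route as the paper: the paper dispatches this proposition by citing Proposition~\ref{p:subgroupwall}, which is itself proved from exactly the ingredients you use (Proposition~\ref{p:genrest1} applied to a closed invariant subspace, plus the eigenvalue--atom correspondence from Proposition~\ref{sst}/Lemma~\ref{hasatom}). Your only addition is to spell out that the general form of Proposition~\ref{p:genrest1} with $\mathcal F=L^2_\co(\widetilde X,\tilde\mu)$ is what licenses replacing the reduced spectral measure by $\varsigma_\co^{\widetilde T}$, which is a fair and accurate filling-in of a detail the paper leaves implicit.
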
 

As with the nearly identical  Theorem~\ref{XX}, this is just Proposition~\ref{p:subgroupwall} 
translated into the terminology of this section.
The next goal is to describe the directional properties of $T$ in terms of its own reduced
spectral measure $\sigma_0^T$
rather than the aperiodic  spectral measure $\varsigma_\co^{\widetilde T}$ for the unit suspension $\widetilde T$.
To do this we 
show how $\sigma_0^T$
determines $\varsigma_\co^{\widetilde T}$. 
The following lemma is a result from \cite{DL}, adapted to our context.  
 Let  $\pi:\R^d\to\T^d=\R^d/\Z^d$ be the canonical projection,
 defined $\pi(\vec t\,)=\vec t\mod \Z^d$, and let $\sim$ denote equivalence of measures..
 \begin{lem}\label{leman}
$\varsigma_{\mathcal O}^{\widetilde T}\circ\pi^{-1}\sim\sigma_0^T.$
\end{lem}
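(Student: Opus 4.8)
The plan is to recognize $\varsigma_\co^{\widetilde T}\circ\pi^{-1}$ as a measure of maximal spectral type for the restriction of $U_{\widetilde T}|_{L^2_\co}$ to the integer lattice $\Z^d\subseteq\R^d$, and then to identify that restricted representation concretely as a direct sum of copies of the reduced Koopman representation of $T$. The mechanism for the first move is Proposition~\ref{p:genrest1}, applied with $G=\R^d$, $H=\Z^d$, and $\mathcal F=L^2_\co(\widetilde X,\tilde\mu)$. Here $H^\perp=\Z^d\subseteq\widehat{\R^d}=\R^d$, so $\widehat H=\R^d/\Z^d=\T^d$ and the canonical projection $\widehat G\to\widehat H$ is exactly the map $\pi$ of the statement. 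Thus, writing $f_0$ for a function of maximal spectral type of $U_{\widetilde T}|_{L^2_\co}$ (so that $\varsigma_\co^{\widetilde T}=\varsigma_{f_0}^{\widetilde T}$), Proposition~\ref{p:genrest1} tells me that $f_0$ is simultaneously of maximal spectral type for the $\Z^d$-restriction, and that its spectral measure there is $\varsigma_\co^{\widetilde T}\circ\pi^{-1}$.

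First I would compute the restriction of the suspension itself to the lattice. For $\vec n\in\Z^d$ and $\vec r\in[0,1)^d$ one has $\lfloor\vec n+\vec r\rfloor=\vec n$ and $\{\vec n+\vec r\,\}=\vec r$, so $\widetilde T^{\vec n}(x,\vec r\,)=(T^{\vec n}x,\vec r\,)$; that is, the restriction of $\widetilde T$ to $\Z^d$ is $T\times\mathrm{Id}$ on $X\times[0,1)^d$. Under the identification $L^2(\widetilde X,\tilde\mu)=L^2(X,\mu)\otimes L^2([0,1)^d,\lambda)$, the lattice therefore acts by $U_T^{\vec n}\otimes\mathrm{Id}$.

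Next I would identify $L^2_\co$. Since $\co$ is exactly the subspace of functions constant in the $x$-variable, its orthogonal complement is $L^2_\co=L^2_0(X,\mu)\otimes L^2([0,1)^d,\lambda)$. Fixing an orthonormal basis $\{e_k\}$ of $L^2([0,1)^d,\lambda)$ splits this as $\bigoplus_k\big(L^2_0(X,\mu)\otimes e_k\big)$, and on each summand the lattice action $U_T^{\vec n}\otimes\mathrm{Id}$ is a single copy of the reduced representation $U_T|_{L^2_0(X,\mu)}$. Hence the $\Z^d$-restriction of $U_{\widetilde T}|_{L^2_\co}$ is a countable direct sum of copies of $U_T|_{L^2_0(X,\mu)}$. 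Placing a function of maximal spectral type in one summand, and using that the spectral measure of a vector is the sum of the spectral measures of its components in orthogonal invariant summands, one checks that the maximal spectral type of such a direct sum coincides with that of a single copy, namely $\sigma_0^T$. Comparing with the first paragraph, $\varsigma_\co^{\widetilde T}\circ\pi^{-1}$ and $\sigma_0^T$ are both of maximal spectral type for the same $\Z^d$-representation, hence equivalent.

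The substance lies entirely in the two identifications---that $\widetilde T$ restricts to $T\times\mathrm{Id}$ on the lattice, and that $L^2_\co$ is precisely the $x$-mean-zero subspace $L^2_0(X,\mu)\otimes L^2([0,1)^d,\lambda)$---both of which are short computations (the first using only $\{\vec n+\vec r\,\}=\vec r$ for $\vec n\in\Z^d$, the second being Fourier analysis on $[0,1)^d$). After that, Proposition~\ref{p:genrest1} and the stability of maximal spectral type under direct sums do the work, so I do not expect a genuine obstacle beyond this bookkeeping. The one point needing care is to apply Proposition~\ref{p:genrest1} to $\mathcal F=L^2_\co$ rather than to all of $L^2(\widetilde X,\tilde\mu)$: restricting to $\co^\perp$ is exactly what discards the rotation factor responsible for Proposition~\ref{p:baddir}, and it is what makes the pushforward land on the reduced measure $\sigma_0^T$ instead of on $\sigma^T$ together with the extra atoms coming from $\co$.
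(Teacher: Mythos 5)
Your proof is correct, but it takes a genuinely different route from the paper's. The paper's proof is essentially a citation: it invokes Proposition 2.1 of Danilenko--Lema\'nczyk \cite{DL}, which asserts $\varsigma^{\widetilde T}\circ\pi^{-1}\sim\sigma^T$ for the \emph{full} maximal spectral types, and then passes to the reduced versions by noting that $\varsigma^{\widetilde T}_\co$ is obtained from $\varsigma^{\widetilde T}$ by deleting the atoms at $\Z^d$ (the contribution of $\co$) while $\sigma_0^T$ is obtained from $\sigma^T$ by deleting the atom at $\vec 0=\pi(\Z^d)$; that bookkeeping step implicitly uses ergodicity of $T$ (via Corollary~\ref{c:noatomatzero}) to guarantee that all the mass of $\varsigma^{\widetilde T}$ sitting over $\Z^d$ really does come from $\co$ and all the mass of $\sigma^T$ at $\vec 0$ from the constants. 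Your argument instead stays entirely inside the paper's own toolkit: Proposition~\ref{p:genrest1} applied with $G=\R^d$, $H=\Z^d$ (so $H^\perp=\Z^d$ and the quotient map is exactly $\pi$) and $\mathcal F=L^2_\co(\widetilde X,\tilde\mu)$ identifies $\varsigma_\co^{\widetilde T}\circ\pi^{-1}$ as a maximal spectral type for the lattice restriction of $U_{\widetilde T}|_{L^2_\co}$, and the two explicit identifications $\widetilde T^{\vec n}=T^{\vec n}\times\mathrm{Id}$ and $L^2_\co=L^2_0(X,\mu)\otimes L^2([0,1)^d,\lambda)$ exhibit that restriction as a countable orthogonal direct sum of copies of $U_T|_{L^2_0(X,\mu)}$, whose maximal spectral type is $\sigma_0^T$; uniqueness of maximal spectral type up to equivalence then finishes. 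What your route buys is self-containedness (no appeal to \cite{DL}) and freedom from the ergodicity hypothesis; what the paper's route buys is brevity. The one step you pass over quickly --- that a countable orthogonal direct sum of copies of a representation has the same maximal spectral type as a single copy --- is standard and follows, exactly as you indicate, from $\sigma_f=\sum_k\sigma_{f_k}$ for $f=\sum_k f_k\otimes e_k$ together with Proposition~\ref{sst}.
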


\begin{proof}
Let $\varsigma^{\widetilde T}$ and $\sigma^T$, respectively, be the 
maximal spectral types for $U_{\widetilde T}$ and $U_T$
on $L^2(\widetilde X,\widetilde \mu)$ and $L^2(X,\mu)$. 
Proposition 2.1 in \cite{DL} states that $\varsigma^{\widetilde T}\circ\pi^{-1}\sim\sigma^T$. 
Now,  $\varsigma^{\widetilde T}_\co$ is obtained from  
$\varsigma^{\widetilde T}$ by removing the atoms at $\Z^d\subseteq\R^d$ 
corresponding the eigenfunctions in $\co$, and $\sigma^T_0$  is obtained from $\sigma^T$
by removing the atom at $\vec 0\in\T^d$ corresponding to the invariant functions. 
The result follows since 
$\pi$ maps $\Z^d\subseteq\R^d$ to $\vec 0\in\T^d$. 
 \end{proof}

 By identifying $\T^d=[0,1)^d$ we can think of 
 $\sigma_0^T$ as a measure on $[0,1)^d$ or as
 finite measure on $\R^d$ with support $[0,1)^d$.
 Taking the latter view, we define a finite Borel measure on $\R^d$ by 
 \begin{equation}\label{average1}
\widetilde{\sigma_0^T}=\sum_{\vec n\in \Z^d} 
a_{\vec n} \cdot(\sigma_0^T\circ\tau^{\vn}),
\end{equation}  
where $a_\vn$ is an arbitrary positive sequence with 
$\sum_{\vec n \in \Z^d}a_{\vec n}=1$
and  $\tau^\vn(\vt\,):=\vt-\vn$ is translation on $\R^d$.
 
\begin{cor}\label{c:corDL}
$\varsigma_\co^{\widetilde T}\ll\widetilde{\sigma_0^T}$.
\end{cor}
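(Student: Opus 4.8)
The plan is to verify the absolute continuity straight from the definition: I take a Borel set $A\subseteq\R^d$ with $\widetilde{\sigma_0^T}(A)=0$ and show $\varsigma_\co^{\widetilde T}(A)=0$, using Lemma~\ref{leman} as the only real input linking $\varsigma_\co^{\widetilde T}$ to $\sigma_0^T$.

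First I would cut $A$ along the integer tiling of $\R^d$, setting $A_{\vn}:=A\cap\bigl([0,1)^d+\vn\bigr)$ for $\vn\in\Z^d$, so that $A=\bigcup_{\vn\in\Z^d}A_{\vn}$ is a disjoint union. On each translated cube $[0,1)^d+\vn$ the projection $\pi$ restricts to a Borel isomorphism onto $\T^d=[0,1)^d$, with $\pi(A_{\vn})=A_{\vn}-\vn$; hence each $\pi(A_{\vn})$ is Borel and $\pi(A)=\bigcup_{\vn}\pi(A_{\vn})$ is Borel as well. A direct computation then evaluates the mass $\widetilde{\sigma_0^T}$ assigns to $A$: since $\sigma_0^T$ is carried by $[0,1)^d$ we have $(\sigma_0^T\circ\tau^{\vn})(A)=\sigma_0^T(A-\vn)$ and $(A-\vn)\cap[0,1)^d=A_{\vn}-\vn=\pi(A_{\vn})$, so that
\[
\widetilde{\sigma_0^T}(A)=\sum_{\vn\in\Z^d}a_{\vn}\,\sigma_0^T\bigl(\pi(A_{\vn})\bigr).
\]

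Because each weight $a_{\vn}$ is strictly positive, the hypothesis $\widetilde{\sigma_0^T}(A)=0$ forces $\sigma_0^T\bigl(\pi(A_{\vn})\bigr)=0$ for every $\vn$, whence $\sigma_0^T(\pi(A))\le\sum_{\vn}\sigma_0^T(\pi(A_{\vn}))=0$ by countable subadditivity. To finish I would use the inclusion $A\subseteq\pi^{-1}(\pi(A))$ to write $\varsigma_\co^{\widetilde T}(A)\le\varsigma_\co^{\widetilde T}\bigl(\pi^{-1}(\pi(A))\bigr)=\bigl(\varsigma_\co^{\widetilde T}\circ\pi^{-1}\bigr)(\pi(A))$, and then apply Lemma~\ref{leman}: since $\varsigma_\co^{\widetilde T}\circ\pi^{-1}\sim\sigma_0^T$ and $\sigma_0^T(\pi(A))=0$, the right-hand side vanishes, giving $\varsigma_\co^{\widetilde T}(A)=0$.

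The one point I would watch carefully is the measurability bookkeeping in the decomposition: I must check that $\pi$ is genuinely a Borel isomorphism on each fundamental-domain translate, so that the sets $\pi(A_{\vn})$, and therefore $\pi(A)$, are legitimate Borel sets on which $\sigma_0^T$ and the push-forward identity $(\varsigma_\co^{\widetilde T}\circ\pi^{-1})(B)=\varsigma_\co^{\widetilde T}(\pi^{-1}(B))$ can be evaluated. Everything else is routine once the sum over $\Z^d$ is organized. It is worth noting that the argument yields only $\ll$ and not $\sim$: the measure $\widetilde{\sigma_0^T}$ is built to place positive mass on every integer translate of the cube, whereas $\varsigma_\co^{\widetilde T}$ may live on far fewer of them, so no reverse domination should be expected.
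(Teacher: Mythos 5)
Your argument is correct and takes essentially the same route as the paper's proof: both rest on cutting along the integer translates $Q_{\vn}=[0,1)^d+\vn$ and invoking Lemma~\ref{leman} as the sole link between $\varsigma_\co^{\widetilde T}$ and $\sigma_0^T$. The paper phrases the decomposition at the level of measures, writing $\varsigma_\co^{\widetilde T}=\sum_{\vn}\varsigma_\co^{\widetilde T}|_{Q_{\vn}}$ with each piece dominated by the corresponding translate of $\sigma_0^T$, whereas you verify the null-set definition of $\ll$ directly; these are two formulations of the same argument, and your closing remark that only $\ll$ (not $\sim$) can be expected without ergodicity matches the paper's Theorem~\ref{unit1}.
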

\begin{proof}
For $\vn\in\Z^d$ let  $Q_{\vec n}=[0,1)^d+\vec n\subseteq\R^d$.   Using 
Lemma~\ref{leman} we have $\varsigma_\co^{\widetilde T}|_{Q_\vn}\circ\tau^{-\vec n}\ll\sigma_0^T$.  
Then $\varsigma_\co^{\widetilde T}=\sum_{\vn\in\Z^d}\varsigma_\co^{\widetilde T}|_{Q_\vn}$.
\end{proof}

If we assume ergodicity, we can strengthen Corollary~\ref{c:corDL} by using the following lemma.

\begin{thm}\label{unit1}
Let $T$ be an ergodic $\Z^d$-action and let $\widetilde T$ be its unit suspension.  
Then $\varsigma_\co^{\widetilde T}\sim\widetilde{\sigma_0^T}$.
\end{thm}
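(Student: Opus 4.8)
The goal is to upgrade the absolute continuity $\varsigma_\co^{\widetilde T}\ll\widetilde{\sigma_0^T}$ from Corollary~\ref{c:corDL} to a genuine equivalence, under the additional hypothesis that $T$ is ergodic. The plan is to establish the reverse absolute continuity $\widetilde{\sigma_0^T}\ll\varsigma_\co^{\widetilde T}$. Since $\widetilde{\sigma_0^T}=\sum_{\vn\in\Z^d}a_\vn\cdot(\sigma_0^T\circ\tau^\vn)$ is, up to the positive weights $a_\vn$, a superposition of the translates of $\sigma_0^T$ placed on each cube $Q_\vn=[0,1)^d+\vn$, and since the $Q_\vn$ tile $\R^d$, it suffices to show that on each cube the measure $\varsigma_\co^{\widetilde T}|_{Q_\vn}$ dominates $\sigma_0^T\circ\tau^\vn$. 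By Lemma~\ref{leman} we already know $\varsigma_\co^{\widetilde T}\circ\pi^{-1}\sim\sigma_0^T$, so the content of the theorem is that the mass of $\varsigma_\co^{\widetilde T}$ that projects under $\pi$ onto a given $\pi$-null set $E\subseteq\T^d$ cannot ``disappear'' from any individual cube; equivalently, no translate $Q_\vn$ can fail to carry the full spectral support that $\pi$ sees.

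\textbf{Key steps.} First I would fix $\vn\in\Z^d$ and a Borel set $A\subseteq Q_\vn$ with $\varsigma_\co^{\widetilde T}(A)=0$, and aim to show $(\sigma_0^T\circ\tau^\vn)(A)=0$, i.e.\ $\sigma_0^T(\tau^\vn A)=0$. Combined with Corollary~\ref{c:corDL} this yields the equivalence. The mechanism I expect to exploit is the $U_{\widetilde T}$-invariance of the spectral type together with the structure of the suspension: the representation $U_{\widetilde T}$ restricted to $L^2_\co$ carries an $\R^d$-action whose spectral measure $\varsigma_\co^{\widetilde T}$ is quasi-invariant in a way that links the mass on $Q_\vn$ to the mass on $Q_{\vec 0}$. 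Concretely, the eigenfunctions $e^{2\pi i(\vec n\cdot\vec r)}$ that generate $\co$ are exactly the characters being removed, and multiplication by such a character on $L^2(\widetilde X,\tilde\mu)$ implements a translation of the spectral measure by $\vn$ on $\R^d$ while preserving the subspace $L^2_\co$ (since $\co$ is itself invariant under multiplication by these characters). This should show that $\varsigma_\co^{\widetilde T}$ is, up to equivalence, invariant under the translations $\tau^\vn$, $\vn\in\Z^d$. Given $\Z^d$-translation-invariance of the equivalence class of $\varsigma_\co^{\widetilde T}$, the mass on every cube $Q_\vn$ is equivalent to the mass on $Q_{\vec 0}$, and Lemma~\ref{leman} identifies the latter (after $\pi$) with $\sigma_0^T$; this pins down $\varsigma_\co^{\widetilde T}$ to the equivalence class of $\widetilde{\sigma_0^T}$.

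\textbf{Main obstacle.} The delicate point is precisely the claimed $\Z^d$-translation quasi-invariance of $\varsigma_\co^{\widetilde T}$, and this is where ergodicity of $T$ must enter. Without ergodicity one could imagine the aperiodic spectral measure concentrating on some cubes and vanishing on others, so that $\varsigma_\co^{\widetilde T}$ is strictly smaller than $\widetilde{\sigma_0^T}$ on part of $\R^d$; Corollary~\ref{c:corDL} alone does not rule this out. I expect the cleanest route is to verify that for each $\vn\in\Z^d$ the unitary $M_{\vn}$ on $L^2(\widetilde X,\tilde\mu)$ given by multiplication by $e^{2\pi i(\vn\cdot\vec r)}$ preserves $L^2_\co$ and satisfies $M_\vn U_{\widetilde T}^{\vt} M_\vn^{-1}=\overline{e^{2\pi i(\vn\cdot\vt)}}\,U_{\widetilde T}^{\vt}$, the standard intertwining identity that translates spectral measures by $\vn$. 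Establishing that $M_\vn$ maps $L^2_\co$ onto itself bijectively (so that it genuinely acts on the restricted representation and does not leak mass into or out of $\co$) is the technical heart, and I would use ergodicity of $T$—hence of $\widetilde T$—to guarantee that the only eigenfunctions at integer eigenvalues are the scalar multiples of the $\co$-generators, so that the complement $L^2_\co$ is stable under these multipliers. Once that invariance is in hand, the translation-quasi-invariance of $\varsigma_\co^{\widetilde T}$ follows formally, and the equivalence with $\widetilde{\sigma_0^T}$ is immediate.
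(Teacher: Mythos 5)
Your proposal is correct and follows essentially the same route as the paper: the paper obtains the $\Z^d$-translation quasi-invariance of $\varsigma_\co^{\widetilde T}$ by combining Proposition~\ref{p:suspratl} (which gives $\Z^d\subseteq\Sigma_{\widetilde T}$) with Lemma~\ref{symmetry}, then matches each cube $Q_{\vn}$ against $Q_{\vec 0}$ via Lemma~\ref{leman}, exactly as you do; your version merely unfolds Lemma~\ref{symmetry} explicitly through the multipliers $M_{\vn}$. One small correction: the stability of $L^2_\co$ under $M_{\vn}$, which you identify as the technical heart and attribute to ergodicity, is in fact immediate from $M_{\vn}$ being unitary with $M_{\vn}\co=\co$ (hence $M_{\vn}\co^\perp=\co^\perp$); ergodicity enters only through the invocation of Lemma~\ref{symmetry} (equivalently, through the constancy of $|f|$ for eigenfunctions, which here is automatic for the characters $e^{2\pi i(\vn\cdot\vec r\,)}$).
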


The proof depends on the follwing well-known lemma.
 
\begin{lem}[see Theorem~3.16 in \cite{KT}]\label{symmetry}
Let $T$ be an ergodic $G$-action and let $\gamma\in \Sigma_T$. 
Define
$\tau^\gamma:\widehat G\to\widehat G$ by 
$\tau^\gamma(\omega)=\omega+\gamma$. Then 
$\sigma^T\sim\sigma^T\circ\tau^\gamma$, where $\sim$ denotes the 
equivalence of measures.
\end{lem}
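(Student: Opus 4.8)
The plan is to realize the translation $\tau^\gamma$ of the maximal spectral type as conjugation of the Koopman representation by multiplication with an eigenfunction of $\gamma$. First I would invoke Lemma~\ref{easyergodic}: since $T$ is ergodic and $\gamma\in\Sigma_T$, there is an eigenfunction $g\in L^2(X,\mu)$ with $|g|=1$ $\mu$-a.e., so that $g(T^s x)=\gamma(s)g(x)$ for all $s\in G$. Multiplication $M_g\colon f\mapsto g\cdot f$ is then a unitary operator on $L^2(X,\mu)$, with inverse $M_{\overline g}$, and it twists the Koopman representation by $\gamma$ in the sense that $U_T^s(gf)=\gamma(s)\,g\cdot U_T^s f$ for every $f$.

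The heart of the argument is the identity $\sigma_{gf}=\sigma_f\circ\tau^\gamma$ for every $f\in L^2(X,\mu)$. Using the twist above together with $|g|=1$, the correlation function computes to
\[
\varphi_{gf}(s)=(U_T^s(gf),gf)=\gamma(s)\,(U_T^s f,f)=\gamma(s)\,\varphi_f(s).
\]
Substituting \eqref{smgen} (with integration variable $\omega\in\widehat G$) and using the character identity $\gamma(s)\overline{\omega(s)}=\overline{(\omega-\gamma)(s)}$, followed by the change of variable $\eta=\omega-\gamma$, gives
\[
\varphi_{gf}(s)=\int_{\widehat G}\overline{(\omega-\gamma)(s)}\,d\sigma_f(\omega)=\int_{\widehat G}\overline{\eta(s)}\,d\bigl(\sigma_f\circ\tau^\gamma\bigr)(\eta).
\]
Since $\varphi_{gf}$ is positive definite, the uniqueness clause of Bochner's theorem forces $\sigma_{gf}=\sigma_f\circ\tau^\gamma$.

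To finish, let $f_*$ be a function of maximal spectral type for $U_T$ on $L^2(X,\mu)$ (Proposition~\ref{sst}), so $\sigma^T=\sigma_{f_*}$. Applying the identity with $f=f_*$ yields $\sigma^T\circ\tau^\gamma=\sigma_{gf_*}$; as the spectral measure of an $L^2$ function it satisfies $\sigma^T\circ\tau^\gamma\ll\sigma^T$ by maximality. For the reverse inclusion, take any $h\in L^2(X,\mu)$ and write $h=g\cdot(\overline g h)$, so that the identity gives $\sigma_h=\sigma_{\overline g h}\circ\tau^\gamma$. Since $\sigma_{\overline g h}\ll\sigma^T$ and the set map $\mu\mapsto\mu\circ\tau^\gamma$ preserves absolute continuity ($\tau^\gamma$ being a homeomorphism of $\widehat G$), we get $\sigma_h\ll\sigma^T\circ\tau^\gamma$; taking $h=f_*$ gives $\sigma^T\ll\sigma^T\circ\tau^\gamma$. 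Combining the two inclusions yields $\sigma^T\sim\sigma^T\circ\tau^\gamma$.

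I expect the only genuine obstacle to be bookkeeping inside the key computation: one must track the additive/multiplicative conventions for characters of $\widehat G$ and the direction of the pushforward so that the twist by $\gamma$ emerges as $\tau^\gamma$ rather than $\tau^{-\gamma}$. Once the identity $\sigma_{gf}=\sigma_f\circ\tau^\gamma$ is in hand, the passage to maximal spectral types and the equivalence are routine.
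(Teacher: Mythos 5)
The paper states this lemma without proof, citing Theorem~3.16 of \cite{KT}, and your argument is exactly the standard one behind that citation: multiplication by a unimodular eigenfunction $g$ (Lemma~\ref{easyergodic}) twists the Koopman representation by $\gamma$, giving the key identity $\sigma_{gf}=\sigma_f\circ\tau^\gamma$, after which maximality of $\sigma^T$ and the factorization $h=g\cdot(\overline{g}h)$ yield both absolute continuities. Your bookkeeping is also right: with the convention $(\sigma\circ\tau^\gamma)(E)=\sigma(E+\gamma)$, the change of variable $\eta=\omega-\gamma$ does produce $\tau^\gamma$ rather than $\tau^{-\gamma}$, so there is nothing to fix.
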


 \begin{proof}[Proof of Theorem~\ref{unit1}]
 Since $T$ is ergodic, so is $\widetilde T$.  
 By Proposition~\ref{p:suspratl} we have $\mathbb Z^d\subseteq\Sigma_{\widetilde T}$.  
By Lemma~\ref{symmetry}, for all $\vec n\in\mathbb Z^d$
$\varsigma_\co^{\widetilde T}|_{Q_\vn}\sim \varsigma_\co^{\widetilde T}|_{Q_{\vec 0}}$.
 By the definition of $\pi$ and Lemma~\ref{leman} this means for all $\vec n\in\mathbb Z^d$
$\varsigma_\co^{\widetilde T}|_{Q_\vn}\sim \sigma_0^T\circ\tau^{-\vec n}$
 therefore $\varsigma_\co^{\widetilde T}=\sum_{\vn\in\Z^d}\varsigma_\co^{\widetilde T}|_{Q_\vn}\sim  
 \widetilde{\sigma_0^T}$.
 \end{proof}

We are now ready to define directional behavior of a $\Z^d$-action $T$ in terms of its own spectral measure
$\sigma_0^T$.
We begin by 
adapting the definition of a wall measure, Definition~\ref{wall}, 
to the case of measures on $\T^d$. Let 
$\pi:\R^d\to\T^d=[0,1)^d$ defined $\pi(\vt\,)=\vt \mod \Z^d$.   
For $L\in \G_{e,d}$ the projection $\pi(L^\perp)$ 
is a $(d-e)$-dimensional (not necessarily closed) subgroup of $\T^d$.  
For  $\vec \ell\in L$, we refer to $P=\pi(L^\perp+\vec\ell)=\pi(L^\perp)+\pi(\vec \ell)$ as an affine subset in $\T^d$ 
perpendicular to $L$.  

\begin{dfn}\label{wallZ} 
 Let $\sigma$ be a finite Borel measure on  $\T^d$. Let $L\in \G_{e,d}$
so that  $L^\perp\in \G_{d-e,d}$. 
\begin{enumerate}
\item Let $P=\pi(L^\perp)+\vec \ell$ be an affine subset $\T^d$.
If  $\sigma(P)>0$,
 we say that $P$ is a $(d-e)$-dimensional {\em wall} for $\sigma$. The wall 
 $P$ is said to be in the direction $L^\perp$, or 
 perpendicular to $L$.
 
\item A wall $P$ for $\sigma$ is called {\em proper} if $P$ 
contains no lower dimensional walls $Q\subseteq P$ for $\sigma$. 

\item We call a measure $\sigma$ an $e$-dimensional {\em wall-measure} if there is an 
$e$-dimensional proper wall $P$ for $\sigma$ that 
contains its support. 
We call $P$ the {\em carrier} of $\sigma$.
\end{enumerate}
\end{dfn}
 
A $0$-dimensional wall measure on $\mathbb T^d$ is an atomic measure with a single atom 
$\vec \ell\in\T^d$ as its carrier, and 
a $d$-dimensional wall measure is a (non-zero) measure with 
carrier $\T^d$. 
The following theorem is the $\Z^d$ analogue of Theorem~\ref{XX}. 

\begin{thm}\label{XXZ2}
Let $T$ be an ergodic $\Z^d$-action and $L\in \G_{e,d}$.  
Then 
\begin{enumerate}
\item  
$L\in \ce_T$ if and only if $\pi(L^\perp)$ is not a wall for $\sigma_0^T$, and 
\item $L\in \cw_T$ if and only if no affine subset
$P=\pi(L^\perp)+\pi(\vec \ell\,)\subseteq\T^d$, $\vec \ell\in L$,
 is a wall for 
$\sigma_0^T$.
\end{enumerate}
\end{thm}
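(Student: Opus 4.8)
The plan is to reduce everything to the already-proven spectral characterization on the suspension side and then transport it from $\R^d$ down to $\T^d$ under $\pi$. By Proposition~\ref{likeXX}, the directional properties of $T$ are detected by walls of the aperiodic spectral measure $\varsigma_\co^{\widetilde T}$ in $\R^d$: $L\in\ce_T$ iff $L^\perp$ is not a wall for $\varsigma_\co^{\widetilde T}$, and $L\in\cw_T$ iff no $P=L^\perp+\vec\ell$, $\vec\ell\in L$, is a wall for $\varsigma_\co^{\widetilde T}$. Since whether a set is a wall depends only on the measure class, and since $T$ is ergodic, Theorem~\ref{unit1} lets me replace $\varsigma_\co^{\widetilde T}$ throughout by the explicit periodized measure $\widetilde{\sigma_0^T}$ of \eqref{average1}. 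So the whole theorem will follow once I match walls of the $\R^d$-measure $\widetilde{\sigma_0^T}$ with walls of the $\T^d$-measure $\sigma_0^T$.

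The heart of the argument is the claim that for every affine subset $P=L^\perp+\vec\ell\subseteq\R^d$ with $\vec\ell\in L$, the set $P$ is a wall for $\widetilde{\sigma_0^T}$ if and only if $\pi(P)=\pi(L^\perp)+\pi(\vec\ell)$ is a wall for $\sigma_0^T$. To prove it I would first record two elementary facts about $\widetilde{\sigma_0^T}$, both read off directly from its definition \eqref{average1}. First, because all the weights $a_\vn$ are strictly positive, $\widetilde{\sigma_0^T}$ is $\Z^d$-quasi-invariant, i.e.\ $\widetilde{\sigma_0^T}\circ\tau^\vm\sim\widetilde{\sigma_0^T}$ for every $\vm\in\Z^d$; in particular $\widetilde{\sigma_0^T}(P+\vm)>0$ iff $\widetilde{\sigma_0^T}(P)>0$. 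Second, since each term $\sigma_0^T\circ\tau^\vn$ is carried by $Q_\vn=[0,1)^d+\vn$, on which $\pi$ is a bijection onto $\T^d$, and the weights sum to $1$, pushing forward gives $\widetilde{\sigma_0^T}\circ\pi^{-1}=\sigma_0^T$, in agreement with Lemma~\ref{leman}. Using $\pi^{-1}(\pi(P))=P+\Z^d$, I then compute
\[
\sigma_0^T(\pi(P))=\big(\widetilde{\sigma_0^T}\circ\pi^{-1}\big)(\pi(P))=\widetilde{\sigma_0^T}\big(\pi^{-1}(\pi(P))\big)=\widetilde{\sigma_0^T}\Big(\bigcup_{\vn\in\Z^d}(P+\vn)\Big).
\]
As this is a countable union, $\sigma_0^T(\pi(P))>0$ iff $\widetilde{\sigma_0^T}(P+\vn)>0$ for some $\vn\in\Z^d$, and by $\Z^d$-quasi-invariance this holds iff $\widetilde{\sigma_0^T}(P)>0$, proving the claim.

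Finally I would apply the claim twice. Taking $\vec\ell=\vec 0$, so $P=L^\perp$, yields part (1): $L\in\ce_T$ iff $L^\perp$ is not a wall for $\widetilde{\sigma_0^T}$ iff $\pi(L^\perp)$ is not a wall for $\sigma_0^T$. Letting $\vec\ell$ range over all of $L$ yields part (2), since the per-$\vec\ell$ equivalence turns ``no $P=L^\perp+\vec\ell$ is a wall for $\widetilde{\sigma_0^T}$'' into ``no $\pi(L^\perp)+\pi(\vec\ell)$ is a wall for $\sigma_0^T$''. The main obstacle, and the only place ergodicity is genuinely used, is precisely the passage from the single $\R^d$-wall $P$ to the collapsed $\T^d$-wall $\pi(P)$: because $\pi$ wraps $\R^d$ around $\T^d$, the preimage $\pi^{-1}(\pi(P))$ is the entire countable family $\{P+\vn\}_{\vn\in\Z^d}$ of parallel translates, so a priori $\pi(P)$ could be a wall for $\sigma_0^T$ while $P$ itself is $\widetilde{\sigma_0^T}$-null. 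It is exactly the $\Z^d$-quasi-invariance of $\widetilde{\sigma_0^T}$ --- equivalently, the symmetry of $\varsigma_\co^{\widetilde T}$ under the eigenvalues $\Z^d\subseteq\Sigma_{\widetilde T}$ encoded in Theorem~\ref{unit1} --- that rules this out and makes the collapse harmless.
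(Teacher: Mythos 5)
Your proof is correct and follows essentially the same route as the paper's: reduce to the wall characterization for $\varsigma_\co^{\widetilde T}$ via Proposition~\ref{likeXX}, then invoke Theorem~\ref{unit1} to pass to $\widetilde{\sigma_0^T}$ and hence to $\sigma_0^T$. The only difference is that you explicitly work out the step the paper compresses into ``follows immediately from Theorem~\ref{unit1}'' --- namely that $\pi^{-1}(\pi(P))=\bigcup_{\vn}(P+\vn)$ together with the $\Z^d$-quasi-invariance of $\widetilde{\sigma_0^T}$ makes $P$ a wall for $\widetilde{\sigma_0^T}$ exactly when $\pi(P)$ is a wall for $\sigma_0^T$ --- which is a correct and worthwhile elaboration.
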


\begin{cor}\label{arespectral}
Directional ergodicity and directional weak mixing for $\Z^d$-actions $T$ are spectral properties (i.e., they depend only on $\sigma_0^T$). It follows that 
they are isomorphism invariants.  
\end{cor}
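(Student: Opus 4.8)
The plan is to deduce Corollary~\ref{arespectral} directly from Theorem~\ref{XXZ2}, treating the latter as already established. The key observation is that Theorem~\ref{XXZ2} expresses membership in $\ce_T$ and $\cw_T$ entirely through the reduced spectral measure $\sigma_0^T$: a direction $L$ lies in $\ce_T$ precisely when $\pi(L^\perp)$ is not a wall for $\sigma_0^T$, and $L$ lies in $\cw_T$ precisely when no affine subset $\pi(L^\perp)+\pi(\vec\ell\,)$ with $\vec\ell\in L$ is a wall for $\sigma_0^T$. Being a wall is, by Definition~\ref{wallZ}, just the condition $\sigma_0^T(P)>0$ for the relevant subset $P\subseteq\T^d$. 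Since the sets $\pi(L^\perp)$ and $\pi(L^\perp)+\pi(\vec\ell\,)$ are determined by $L$ alone and do not reference $T$, the conditions defining $\ce_T$ and $\cw_T$ are functions of $\sigma_0^T$ only. This is exactly the assertion that directional ergodicity and weak mixing are spectral properties.

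First I would state that by Theorem~\ref{XXZ2}, for an ergodic $\Z^d$-action $T$ the sets $\ce_T$ and $\cw_T$ are completely determined by which affine subsets of $\T^d$ perpendicular to the various directions $L$ receive positive $\sigma_0^T$-mass. Next I would invoke the fact, recorded in the discussion preceding Lemma~\ref{hasatom}, that metrically isomorphic $G$-actions have equivalent reduced spectral measures. Thus if $T$ and $S$ are two isomorphic ergodic $\Z^d$-actions, then $\sigma_0^T\sim\sigma_0^S$, so for every Borel set $P\subseteq\T^d$ one has $\sigma_0^T(P)>0$ if and only if $\sigma_0^S(P)>0$. Applying this equivalence to each candidate wall $P=\pi(L^\perp)$ and $P=\pi(L^\perp)+\pi(\vec\ell\,)$, the wall conditions in Theorem~\ref{XXZ2} agree for $T$ and $S$, whence $\ce_T=\ce_S$ and $\cw_T=\cw_S$.

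I would close by drawing the two conclusions the corollary asserts: the dependence on $\sigma_0^T$ alone establishes that directional ergodicity and weak mixing are spectral properties, and the invariance under measure equivalence established in the previous step yields that they are isomorphism invariants. No genuine obstacle arises here; the content is entirely front-loaded into Theorem~\ref{XXZ2} and the standard spectral-invariance statement, and the corollary is a bookkeeping consequence. The only point requiring a word of care is that the candidate walls appearing in Theorem~\ref{XXZ2} depend solely on the geometry of $L$ and not on the action, so that passing to an isomorphic action changes only the measure and not the family of sets being tested; once this is noted, the equivalence $\sigma_0^T\sim\sigma_0^S$ transfers the positivity conditions verbatim.
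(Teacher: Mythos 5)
Your proposal is correct and matches the paper's intent exactly: the paper offers no separate argument, presenting the corollary as an immediate consequence of Theorem~\ref{XXZ2} together with the fact (noted before Lemma~\ref{hasatom}) that isomorphic actions have equivalent reduced spectral measures, which is precisely your reasoning. The only point worth a passing remark is that Theorem~\ref{XXZ2} assumes ergodicity while the corollary does not, but since ergodicity is itself visible in $\sigma_0^T$ (an atom at $\vec 0$) and non-ergodic actions have $\ce_T=\cw_T=\emptyset$, your argument covers the general statement with no real change.
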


\begin{proof}[Proof of Theorem~{\rm \ref{XXZ2}}]
Definition~\ref{Zddef1} and Theorem~\ref{XX} together imply that $L\notin\cw_T$ if and only if there is an affine subset $P=L+\vec\ell$ that is a wall for $\varsigma_{\mathcal O}^{\widetilde T}$.   The result then follows immediately from Theorem~\ref{unit1}.  A similar argument holds for $L\notin\ce_T$.  
\end{proof}
 
 \subsection{Completely rational directions}

In this section we show that, in completely rational directions, 
Definition~\ref{Zddef1} agrees with 
ergodicity and weak mixing of actions restricted to 
the corresponding subgroups.
 
 \begin{thm}\label{p:equivdef}
Let $T$ be an ergodic $\Z^d$-action and let $L\in\mathbb G_{e,d}$ 
be a completely rational direction, with $0<e< d$.  Let $\Lambda=\Z^d\cap L$. Then $T^{|\Lambda}$ is an ergodic (weak 
mixing) subgroup action if and only if $T$ is ergodic (weak mixing) in the direction $L$.
\end{thm}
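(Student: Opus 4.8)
The plan is to reduce the statement to a comparison of two spectral measures on different groups, using the machinery already established. The key observation is that I have two different characterizations at hand: on the $\Lambda$-side, Corollary~\ref{c:subgrp} tells me that $T^{|\Lambda}$ is non-ergodic (resp.\ non-weak mixing) precisely when $\sigma_0^T$ charges $\Lambda^\perp$ (resp.\ some coset $\nu+\Lambda^\perp$); on the direction-$L$ side, Theorem~\ref{XXZ2} tells me that $L\notin\ce_T$ (resp.\ $L\notin\cw_T$) precisely when $\sigma_0^T$ charges the affine subset $\pi(L^\perp)$ (resp.\ some $\pi(L^\perp)+\pi(\vec\ell)$). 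So the whole theorem will follow once I identify these two families of subsets of $\T^d$ as the same, i.e.\ once I show $\Lambda^\perp=\pi(L^\perp)$ as subsets of $\T^d$, and that the relevant cosets match up.

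The first step is the geometric identification. Since $L$ is completely rational with $\Lambda=\Z^d\cap L\sim\Z^e$ the maximal lattice in $L$, Section~\ref{s:duals} already records that $\Lambda^\perp=\pi(L^\perp)\subseteq\T^d$, a closed $(d-e)$-dimensional subtorus. This is exactly the set appearing in Theorem~\ref{XXZ2}(1), so the ergodicity equivalence is immediate: $L\notin\ce_T$ iff $\sigma_0^T(\pi(L^\perp))>0$ iff $\sigma_0^T(\Lambda^\perp)>0$ iff $T^{|\Lambda}$ is non-ergodic by Corollary~\ref{c:subgrp}(1). The first step thus dispatches the ergodic case completely.

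For the weak mixing case I must match up the cosets. On the direction side the walls are $\pi(L^\perp)+\pi(\vec\ell)$ with $\vec\ell$ ranging over $L$; on the subgroup side they are $\nu+\Lambda^\perp$ with $\nu$ ranging over all of $\widehat{\Z^d}=\T^d$. The inclusion giving non-weak mixing of $T^{|\Lambda}$ from $L\notin\cw_T$ is immediate, since every $\pi(\vec\ell)$ is a valid choice of $\nu$. The converse direction is where I must be slightly careful: I need that whenever $\sigma_0^T(\nu+\Lambda^\perp)>0$ for \emph{some} $\nu\in\T^d$, the coset $\nu+\Lambda^\perp$ is in fact of the form $\pi(L^\perp)+\pi(\vec\ell)$ for some $\vec\ell\in L$. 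Because $\pi(L)+\pi(L^\perp)=\T^d$ (as $L\oplus L^\perp=\R^d$ and $\pi$ is surjective), every coset of $\Lambda^\perp=\pi(L^\perp)$ can be represented by an element of $\pi(L)$, so I can write $\nu+\Lambda^\perp=\pi(\vec\ell)+\pi(L^\perp)$ for a suitable $\vec\ell\in L$. Thus the two families of candidate walls coincide exactly, and Theorem~\ref{XXZ2}(2) together with Corollary~\ref{c:subgrp}(2) give the weak mixing equivalence.

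\textbf{The main obstacle} I anticipate is purely bookkeeping rather than conceptual: making sure the identification $\widehat\Lambda=\widehat{\Z^d}/\Lambda^\perp$ is used consistently so that ``eigenvalue $\gamma\in\widehat\Lambda$ for $T^{|\Lambda}$'' and ``directional $L$ eigenvalue $\vec\ell\in L$ for $T$'' really correspond to the same coset in $\T^d$ under $\pi$, and that the maximality of $\Lambda$ (not merely $\Lambda\sim\Z^e$) is what guarantees $\Lambda^\perp$ is the full subtorus $\pi(L^\perp)$ with no extra finite-group contribution (contrast the non-maximal case in Section~\ref{s:duals}, where $H^\perp=\pi(L^\perp)+F$). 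Once the dictionary between the two notations is pinned down, the proof is a one-line appeal to Theorem~\ref{XXZ2} and Corollary~\ref{c:subgrp}.
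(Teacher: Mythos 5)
Your proposal is correct. The coset-matching step, which is the only place something could go wrong, is fine: since $\R^d=L\oplus L^\perp$ and $\pi:\R^d\to\T^d$ is surjective, every coset $\nu+\Lambda^\perp$ with $\nu\in\T^d$ does equal $\pi(\vec\ell\,)+\pi(L^\perp)$ for some $\vec\ell\in L$, so the family of candidate walls in Theorem~\ref{XXZ2} coincides exactly with the family of cosets in Corollary~\ref{c:subgrp}, and the identification $\Lambda^\perp=\pi(L^\perp)$ for maximal $\Lambda$ is exactly what Section~\ref{s:duals} records. Your route differs from the paper's in one direction: the paper proves that weak mixing of $T^{|\Lambda}$ implies direction $L$ weak mixing by a direct eigenfunction argument --- taking a direction $L$ eigenfunction $F\in L^2_\co(\widetilde X,\widetilde\mu)$ of the unit suspension and restricting it to a fiber $X\times\{\vec r\,\}$ on which it is nonzero, producing an eigenfunction for $T^{|\Lambda}$ with eigenvalue $\pi(\vec\ell\,)$ --- and only proves the converse implication spectrally, essentially as you do. Your version is uniformly spectral and symmetric, which makes the two implications collapse into a single identification of subsets of $\T^d$; the paper's fiber-restriction argument is more concrete and does not lean on the full strength of Theorem~\ref{XXZ2} (hence on Theorem~\ref{unit1}) for that direction, which is worth something since Theorem~\ref{XXZ2} is itself justified by the suspension machinery. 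Both are valid; yours is shorter once the spectral dictionary is in place.
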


\begin{proof}
Suppose that  $T$ is not weak mixing in the direction $L$.  Then by Definition~\ref{Zddef1}, there is $F\in L^2_\co(\widetilde X,\widetilde \mu)$, $F\ne 0$, 
and $\vec\ell\in L$.
Then $\widetilde T^{\vec v}(x,\vec r)=e^{2\pi i(\vec\ell\cdot\vec v\,)}F(x,\vec r\,)$ for all 
$\vec v\in L$.  Since $F\ne 0$, there exists $\vec r\in\mathbb T^d$ such that $F|_{X\times\{\vec r\}}\ne 0$.   
So $f(x):=F(x,\vec r\,)$ is a non-zero 
eigenfunction for $T^{|\Lambda}$ with eigenvalue $\pi(\vec\ell\,)\in\pi(L)\subseteq\mathbb T^d$,
(including the case 
$\vec\ell=\pi(\vec\ell\,)=\vec 0$).   Thus if 
$T^{|\Lambda}$ is ergodic or weak mixing, then $T$ is ergodic or weak mixing in the direction $L$. 

While it is possible to prove the converse  by extending the 
eigenfunctions for $T|^\Lambda$  to eigenfunctions for $\widetilde T^{|L}$, we prove the converse a different way: by 
showing it is immediate corollary of our previous work.  Suppose that $T^{|\Lambda}$ is not 
weak mixing.  Then by Theorem~\ref{XX} there is $\vec\ell\in L$ such that $\sigma_0^T(\vec\ell+L^\perp)>0$, 
and by 
Theorem~\ref{XXZ2}, 
$L\notin\mathcal W_T$. 
Setting $\vec\ell=\vec 0$ we have the analogous statement for ergodicity.  
\end{proof}
                
\subsection{Restrictions and embeddings}\label{s:restem}

Let $T=\{T^\vn\}_{\vn\in\Z^d}$ be a $\Z^d$-action on $(X,\mu)$. If 
$\overline T=\{\overline T^{\vec v}\}$ is an $\R^d$-action on $(X,\mu)$,
 so that $T=\overline T|^{\Z^d}$, then we
 call $\overline T$ an {\em embedding} of $T$.  As noted in the introduction, while not every 
 $\Z^d$-action has an embedding
 (and it may not be unique), but the property is generic (see \cite{Ryz}, see also Section~\ref{s:rigidity}).
 In this section we restrict our attention to  
 $\Z^d$-actions $T$ that do have an embedding, 
 and study the relationship between direction $L$ properties of $T$ 
 and those of the restriction $\overline T^{|L}$. 
We begin with the following observation relating embeddings 
to the unit suspension. 
\begin{lem}\label{e:embunit}
Let $T$ be an 
ergodic $\Z^d$-action, 
let $\overline T$ be an 
embedding,  and let 
$\widetilde T$ be the unit suspension.
Then 
$\varsigma_0^{\overline T}\ll\varsigma_0^{\widetilde T}$. 
\end{lem}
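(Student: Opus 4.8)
The plan is to compare the two $\R^d$-actions $\overline T$ and $\widetilde T$ via a natural intertwining and then invoke Proposition~\ref{sst} to pass from a dynamical containment to the absolute continuity $\varsigma_0^{\overline T}\ll\varsigma_0^{\widetilde T}$. The key observation is that both $\overline T$ and $\widetilde T$ are $\R^d$-actions that restrict to the \emph{same} $\Z^d$-action $T$ on $X$: we have $T=\overline T|^{\Z^d}$ by the definition of embedding, and $\widetilde T|^{\Z^d}$ acts on $\widetilde X=X\times[0,1)^d$ as $T$ on the first coordinate (fixing the torus coordinate). This suggests building an $\R^d$-equivariant map between the two systems.

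First I would construct a factor-type map realizing $\overline T$ inside $\widetilde T$. The natural candidate is the map $\Phi:\widetilde X=X\times[0,1)^d\to X$ given by $\Phi(x,\vec r\,)=\overline T^{\vec r}x$, which ``unwinds'' the suspension using the embedding flow. One checks $\Phi\circ\widetilde T^{\vec t}=\overline T^{\vec t}\circ\Phi$ for all $\vec t\in\R^d$: on the torus coordinate $\widetilde T^{\vec t}$ adds $\vec t$ modulo $\Z^d$ and applies $T^{\lfloor \vec t+\vec r\rfloor}$ to $x$, and since $\overline T$ embeds $T$, the integer part $T^{\lfloor\vec t+\vec r\rfloor}=\overline T^{\lfloor\vec t+\vec r\rfloor}$ recombines with the fractional part $\{\vec t+\vec r\}$ to give exactly $\overline T^{\vec t+\vec r}x$. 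I would also verify $\Phi$ is measure preserving, i.e. $\Phi_*\tilde\mu=\mu$, which follows from Fubini together with the $\overline T$-invariance of $\mu$ for each fixed $\vec r$.

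Given such an equivariant measure-preserving $\Phi$, the induced Koopman operator $U_\Phi F=F\circ\Phi$ embeds $L^2(X,\mu)$ isometrically into $L^2(\widetilde X,\tilde\mu)$ and intertwines $U_{\overline T}$ with $U_{\widetilde T}$. The image $U_\Phi(L^2_0(X,\mu))$ is a closed $U_{\widetilde T}$-invariant subspace on which $U_{\widetilde T}$ is unitarily equivalent to $U_{\overline T}|_{L^2_0}$, so a function of maximal spectral type for $U_{\overline T}|_{L^2_0(X,\mu)}$ pushes forward to a function in $L^2(\widetilde X,\tilde\mu)$ with spectral measure equivalent to $\varsigma_0^{\overline T}$. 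The remaining point is to confirm this image lands in $L^2_{\mathcal O}(\widetilde X,\tilde\mu)=\mathcal O^\perp$: a function $F=f\circ\Phi$ with $f\in L^2_0(X,\mu)$ must be orthogonal to every $G(x,\vec s\,)=g(\vec s\,)$ in $\mathcal O$, which I expect to follow from integrating out the $x$-variable using $\int_X f\,d\mu=0$ and the measure-preserving property of each $\overline T^{\vec r}$. Once $\varsigma_0^{\overline T}=\sigma_{f}$ for some $f\in L^2_{\mathcal O}(\widetilde X,\tilde\mu)$, Proposition~\ref{sst} gives $\sigma_f\ll\varsigma_{\mathcal O}^{\widetilde T}=\varsigma_0^{\widetilde T}$ (in the notation of this lemma), which is the claim.

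The main obstacle I anticipate is verifying the orthogonality $U_\Phi(L^2_0(X,\mu))\subseteq\mathcal O^\perp$ cleanly, since a general $f\circ\Phi$ depends on both $x$ and $\vec r$ through $\overline T^{\vec r}x$ and is not obviously perpendicular to all torus-only functions; the computation $\int_{\widetilde X}(f\circ\Phi)\,\overline{g}\,d\tilde\mu=\int_{[0,1)^d}\big(\int_X f(\overline T^{\vec r}x)\,d\mu(x)\big)\overline{g(\vec r\,)}\,d\vec r=0$ uses the inner integral vanishing for each fixed $\vec r$ by invariance of $\mu$ and $\int_X f\,d\mu=0$, so this should resolve, but it is the step requiring the most care. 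A secondary subtlety is that $\Phi$ need not be invertible, so I work with the one-sided isometry $U_\Phi$ and the containment of spectral types rather than an isomorphism, which is exactly why the conclusion is $\ll$ rather than $\sim$.
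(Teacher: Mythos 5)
Your proof is correct and follows essentially the same route as the paper: both use the factor map $\Phi(x,\vec s\,)=\overline T^{\vec s}x$ to realize $U_{\overline T}$ as a restriction of $U_{\widetilde T}$ to an invariant subspace and then apply Proposition~\ref{sst}. Your extra orthogonality computation placing the image of $L^2_0(X,\mu)$ inside $\mathcal O^\perp$ in fact yields the stronger conclusion $\varsigma_0^{\overline T}\ll\varsigma_{\mathcal O}^{\widetilde T}$ (the paper merely removes the atom at $\vec 0$); only note that your asserted equality $\varsigma_{\mathcal O}^{\widetilde T}=\varsigma_0^{\widetilde T}$ is not right --- the reduced measure $\varsigma_0^{\widetilde T}$ still carries the atoms at $\Z^d\backslash\{\vec 0\}$ coming from the rotation factor --- but since $\varsigma_{\mathcal O}^{\widetilde T}\ll\varsigma_0^{\widetilde T}$ the stated lemma follows anyway.
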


\begin{proof}
We identify $\vec s\in\mathbb T^d=[0,1)^d$ with $\vec s\R^d$. 
Then the map 
$\Phi:\widetilde X\rightarrow X$ defined by 
 $\Phi(x,\vec s\,)=\overline T^{\vec s}(x)$
is a factor map between $\widetilde T$ and $\overline T$.  It then follows that the Koopman 
representation $U_{\overline T}$ is unitarily equivalent to a restriction of $U_{\widetilde T}$ to an 
invariant subspace, and by Proposition~\ref{sst}, 
 $\varsigma^{\overline T}\ll\varsigma^{\widetilde T}$.
Since $T$ is ergodic, so are $\widetilde T$ and $\overline T$, so 
$\varsigma_0^{\overline T}\ll\varsigma_0^{\widetilde T}$
(by removing an atom at $\vec 0$ from each).
\end{proof}

 \begin{thm}\label{t:ergemb}
Let $T$ be an ergodic $\Z^d$-action and let 
$\overline T$ be an embedding  
of $T$ in an $\R^d$-action. For any 
$L\in\mathbb G_{e,d}$,  the restriction $\overline T^{|L}$ is ergodic if and only if $T$ is 
ergodic in the direction $L$.
\end{thm}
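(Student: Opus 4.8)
The plan is to translate both sides of the claimed equivalence into statements about walls and then to compare the reduced spectral measure $\varsigma_0^{\overline T}$ of the embedding with the aperiodic spectral measure $\varsigma_\co^{\widetilde T}$ of the suspension on the single subspace $L^\perp$. On the $\overline T$ side, $\overline T^{|L}$ is not ergodic exactly when $\varsigma_0^{\overline T}(L^\perp)>0$, by Corollary~\ref{c:subgrp} applied to the $\R^d$-action $\overline T$ and the closed subgroup $L$, whose annihilator is $L^\perp$. On the $T$ side, $L\notin\ce_T$ exactly when $L^\perp$ is a wall for $\varsigma_\co^{\widetilde T}$, i.e. $\varsigma_\co^{\widetilde T}(L^\perp)>0$, by Proposition~\ref{likeXX}. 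So the theorem reduces to the measure-theoretic statement $\varsigma_0^{\overline T}(L^\perp)>0 \iff \varsigma_\co^{\widetilde T}(L^\perp)>0$, and everything comes down to comparing these two measures on the one fixed perpendicular subspace.

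For the forward implication ($L\in\ce_T\Rightarrow\overline T^{|L}$ ergodic) I would first sharpen Lemma~\ref{e:embunit}. The factor map $\Phi(x,\vec s)=\overline T^{\vec s}(x)$ satisfies $\int_X g(\overline T^{\vec s}x)\,d\mu=\int_X g\,d\mu$ for every $g$, so $\Phi^{\ast}$ carries $L^2_0(X,\mu)$ into $L^2_\co(\widetilde X,\widetilde\mu)=\co^\perp$; hence the copy of $U_{\overline T}$ sitting inside $U_{\widetilde T}$ on $L^2_0$ already lands in $L^2_\co$, and Proposition~\ref{sst} upgrades Lemma~\ref{e:embunit} to $\varsigma_0^{\overline T}\ll\varsigma_\co^{\widetilde T}$. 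Absolute continuity then yields $\varsigma_\co^{\widetilde T}(L^\perp)=0\Rightarrow\varsigma_0^{\overline T}(L^\perp)=0$, which is the forward direction. Equivalently, one pulls back an $\overline T^{|L}$-invariant function $g\in L^2_0$ to $G=g\circ\Phi$, which is a nonzero direction-$L$ invariant function for $\widetilde T$ lying in $L^2_\co$, contradicting $L\in\ce_T$.

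For the converse I would use the two exact relations already available: $\varsigma_\co^{\widetilde T}\sim\widetilde{\sigma_0^T}$ from Theorem~\ref{unit1}, and $\sigma_0^T=\varsigma_0^{\overline T}\circ\pi^{-1}$ from Corollary~\ref{p:genrest} applied to $T=\overline T^{|\Z^d}$ (using that $(\Z^d)^\perp=\Z^d$ in $\widehat{\R^d}=\R^d$). Together with Theorem~\ref{XXZ2} these give $\varsigma_\co^{\widetilde T}(L^\perp)>0 \iff \sigma_0^T(\pi(L^\perp))>0 \iff \varsigma_0^{\overline T}\bigl(\pi^{-1}\pi(L^\perp)\bigr)=\varsigma_0^{\overline T}(L^\perp+\Z^d)>0$. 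Thus, assuming $L\notin\ce_T$, one knows that $\varsigma_0^{\overline T}$ charges the union $L^\perp+\Z^d=\bigcup_{\vec n}(L^\perp+\vec n)$, and by finiteness it charges some single affine subspace $L^\perp+\vec n_0$ with $\vec n_0\in\Z^d$.

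The main obstacle is precisely the final step of this converse: passing from a charged integer translate $L^\perp+\vec n_0$ to the origin subspace $L^\perp$ itself. The suspension measure $\varsigma_\co^{\widetilde T}$ is $\Z^d$-quasi-invariant — this is the content of Lemma~\ref{symmetry}/Theorem~\ref{unit1}, since $\Z^d\subseteq\Sigma_{\widetilde T}$ — so for $\varsigma_\co^{\widetilde T}$ the translation from $L^\perp+\vec n_0$ back to $L^\perp$ is free. But $\varsigma_0^{\overline T}$ carries no integer eigenvalues (any $\vec n\in\Z^d\cap\Sigma_{\overline T}$ would force a nonconstant $T$-invariant function, contradicting ergodicity of $T$), so it is not $\Z^d$-quasi-invariant and the translation is not automatic. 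Concretely, a charged $L^\perp+\vec n_0$ only records a direction-$L$ eigenfunction of $\overline T$ with nonzero eigenvalue $\mathrm{proj}_L(\vec n_0)$, whereas non-ergodicity of $\overline T^{|L}$ requires a genuine direction-$L$ invariant function; equivalently, one must show that the direction-$L$ invariant function for $\widetilde T$ produced by $L\notin\ce_T$ can be taken inside $\mathcal F=\Phi^{\ast}L^2(X,\mu)$ rather than in $\mathcal F^\perp$. I expect this to be the crux of the argument, and the point at which the interaction between the embedding $\overline T$ and the integer lattice $\Z^d$ must be exploited with care.
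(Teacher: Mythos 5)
Your forward implication is correct and follows the paper's own route, with one genuine improvement: by checking that $\Phi^{\ast}$ carries $L^2_0(X,\mu)$ into $L^2_{\co}(\widetilde X,\widetilde\mu)$ (because $\int_X g(\overline T^{\vec s}x)\,d\mu=\int_X g\,d\mu=0$), you upgrade Lemma~\ref{e:embunit} to $\varsigma_0^{\overline T}\ll\varsigma_{\co}^{\widetilde T}$, which is the absolute continuity one actually needs; the paper only records $\varsigma_0^{\overline T}\ll\varsigma_0^{\widetilde T}$ and leaves implicit that $\varsigma_0^{\overline T}$ has no atoms on $\Z^d\setminus\{\vec 0\}$ (a consequence of the ergodicity of $T$). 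Your reduction of the whole theorem to the single equivalence $\varsigma_0^{\overline T}(L^\perp)>0\iff\varsigma_0^{\overline T}(L^\perp+\Z^d)>0$ is also exactly right.

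The step you could not complete --- passing from $\varsigma_0^{\overline T}(L^\perp+\vec n_0)>0$ for some $\vec n_0\in\Z^d$ to $\varsigma_0^{\overline T}(L^\perp)>0$ --- is not a defect of your argument but of the theorem. The paper's proof of the converse performs this step silently: from $\sigma_0^T(\pi(L^\perp))>0$ and $\sigma_0^T=\varsigma_0^{\overline T}\circ\pi^{-1}$ one only gets $\varsigma_0^{\overline T}(\pi^{-1}(\pi(L^\perp)))=\varsigma_0^{\overline T}(L^\perp+\Z^d)>0$, and the jump to $\varsigma_0^{\overline T}(L^\perp)>0$ genuinely fails. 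Concretely, take $d=2$, $X=\T$, $\overline T^{(t_1,t_2)}x=x+t_1+\sqrt 2\, t_2$, and $L$ the $x$-axis. Then $T=\overline T^{|\Z^2}$ acts by $T^{(m_1,m_2)}x=x+\sqrt2\, m_2$, which is ergodic, while $T^{(m,0)}=\mathrm{Id}$; so $T^{|\Lambda}$ is not ergodic and hence, by Theorem~\ref{p:equivdef} (or directly, since $F(x,\vec s\,)=e^{2\pi i(x+\sqrt 2 s_2)}$ is a nonzero direction-$L$ invariant function for $\widetilde T$ lying in $L^2_{\co}$), $T$ is not ergodic in the direction $L$. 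Yet $\overline T^{|L}$ is the full rotation flow on $\T$ and is ergodic. Here $\varsigma_0^{\overline T}$ is supported on the atoms $(n,n\sqrt2)$, $n\ne 0$, so it charges $L^\perp+(1,0)$ but not $L^\perp$. To respect the paper's standing freeness hypothesis, multiply $T$ and $\overline T$ by a free mixing $\Z^2$-action and a free mixing $\R^2$-embedding of it; none of the relevant conclusions change. This is the classical phenomenon of an ergodic flow whose time-one map is not ergodic, reappearing one direction at a time.

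The statement does become true, and your outline closes, under an added weak mixing hypothesis: if $T$ is weak mixing then so is $\overline T$, and $\varsigma_0^{\overline T}(L^\perp+\vec n_0)>0$ says $\overline T^{|L}$ is not weak mixing, whence Proposition~\ref{p:subgroup1} forces $\overline T^{|L}$ to be non-ergodic, i.e.\ $\varsigma_0^{\overline T}(L^\perp)>0$. That repaired version is exactly what Corollary~\ref{t:wmemb} needs, so the downstream results survive; but Theorem~\ref{t:ergemb} as stated, for merely ergodic $T$, is false, and the point at which you stalled is precisely the error.
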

\begin{proof}
By Theorem~\ref{XX}, $L\notin\ce_{\overline T}$ implies that $L^\perp$ is a 
wall for $\sigma^{\overline T}_0$, namely $\sigma^{\overline T}_0(L^\perp)>0$.   
By Lemma~\ref{e:embunit},  it is also wall for $\varsigma_0^{\widetilde T}$, and 
by Theorem~\ref{XX},  $L\notin\ce_{T}$. 

For the converse, suppose $L\notin\ce_{T}$.    
Then $\sigma^{T}_{0}\left(\pi\left(L^\perp\right)\right)>0$, and since 
$T=\overline T^{|\Z^d}$, 
we have 
$\sigma_0^{T}=\varsigma_0^{\overline T}\circ\pi^{-1}$ by Proposition~\ref{p:genrest} .
Therefore $\sigma_0^{\overline T}\left(L^\perp\right)>0$, and by Theorem~\ref{XX}, $L\notin\ce_{\overline T}$.  
 \end{proof}

If $T$ is a weak mixing $\Z^d$-action, any embedding of $T$ must be a weak mixing as $\R^d$-action.   In this case, 
Theorem~\ref{p:ergiswm} (below) shows ergodicity in a direction is equivalent to weak mixing, and we have the following immediate 
corollary of Theorem~\ref{t:ergemb}. 

\begin{cor}\label{t:wmemb}
Let $T$ be a weak mixing $\Z^d$-action and let
 $\overline T$ denote an embedding of $T$. Then
 $T$ is weak mixing in a direction $L\in{\G}_{e,d}$  if and only if $\overline T^{|L}$ is weak mixing. 
\end{cor}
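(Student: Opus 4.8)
The plan is to derive Corollary~\ref{t:wmemb} as a direct consequence of Theorem~\ref{t:ergemb} together with Theorem~\ref{p:ergiswm}, exactly as the surrounding text suggests. First I would record the preliminary observation that when $T$ is a weak mixing $\Z^d$-action and $\overline T$ is an embedding of $T$, then $\overline T$ is itself a weak mixing $\R^d$-action: since $T=\overline T^{|\Z^d}$ is a restriction of $\overline T$ to the subgroup $\Z^d$, Proposition~\ref{supergroups} guarantees that weak mixing of the restriction forces weak mixing of $\overline T$. This is the key structural fact that lets us invoke the ``ergodicity equals weak mixing in a direction'' principle on both sides.

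The heart of the argument is then a two-sided chain of equivalences. On the $\R^d$ side, since $\overline T$ is weak mixing, Theorem~\ref{p:ergiswm} applied to $\overline T$ tells us that for any direction $L\in\G_{e,d}$, the restriction $\overline T^{|L}$ is weak mixing if and only if it is ergodic. On the $\Z^d$ side, since $T$ is weak mixing, the same Theorem~\ref{p:ergiswm} applied to $T$ tells us that $T$ is weak mixing in the direction $L$ if and only if $T$ is ergodic in the direction $L$. These two biconditionals reduce the weak mixing statement we want to the corresponding ergodicity statement, which is precisely Theorem~\ref{t:ergemb}: $\overline T^{|L}$ is ergodic if and only if $T$ is ergodic in the direction $L$. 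Splicing the three equivalences together yields
\[
\overline T^{|L}\text{ weak mixing}\iff \overline T^{|L}\text{ ergodic}\iff T\text{ ergodic in direction }L\iff T\text{ weak mixing in direction }L,
\]
which is the claim.

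The only genuine subtlety, and the step I would be most careful about, is the bookkeeping on the meaning of ``weak mixing in direction $L$'' in Theorem~\ref{p:ergiswm}. That theorem is stated for weak mixing $\Z^d$- and $\R^d$-actions, and I want to be sure it applies in the directional sense defined by Definition~\ref{Zddef1} (via the unit suspension, on $L^2_\co$) rather than only to the literal subgroup restriction. Since Corollary~\ref{arespectral} establishes that directional ergodicity and weak mixing are spectral properties expressed through $\sigma_0^T$ (and through $\varsigma_0^{\widetilde T}$ via Theorem~\ref{XXZ2} and Proposition~\ref{likeXX}), the directional version of Theorem~\ref{p:ergiswm} is the one in force, and this is consistent on both the $\Z^d$ and $\R^d$ sides. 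I would therefore state the proof concisely, citing Proposition~\ref{supergroups} for weak mixing of $\overline T$, Theorem~\ref{p:ergiswm} for the two reductions to ergodicity, and Theorem~\ref{t:ergemb} for the core equivalence, and leave the spectral consistency as already handled by the cited results. Because every ingredient is already proved, this is essentially a formal chaining argument with no new estimate or construction required.
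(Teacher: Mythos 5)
Your proof is correct and follows essentially the same route as the paper: the authors note just before the corollary that any embedding of a weak mixing $T$ is weak mixing, invoke Theorem~\ref{p:ergiswm} to identify directional ergodicity with directional weak mixing on both sides, and then cite Theorem~\ref{t:ergemb} for the ergodic equivalence. Your chain of equivalences is exactly this argument, spelled out.
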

 
\begin{example}
Here we show
the consequence of removing the ergodicity hypothesis from Theorem~\ref{t:ergemb}.
Let $T$ denote the identity $\Z^d$-action on $\mathbb T^d$. We can embed $T$
both into the identity $\R^d$-action $\overline T_1$ on $\mathbb T^d$, as well as the rotation action of $\R^d$ on 
$\mathbb T^d$ given by  $\overline T_2^{\vec v}(\vec x)=\{\vec x+\vec v\}$.   
Notice that  $\widetilde T$ is an uncountable union of $\R^d$ rotations on $\mathbb T^d$.  Further, for any $A\subseteq\mathbb T^d$ with 
$\mu(A)>0$, the characteristic 
function $\chi_{A\times\mathbb T^d}\in L^2_{\co}(\widetilde X,\widetilde\mu)$ is an $L$ invariant function 
for $\widetilde T$ for any direction $L$. Therefore according to Definition~\ref{Zddef1}, 
$T$ is not ergodic in any direction $L$. 
Turning to the embeddings we see that   $\overline T_1$ is not ergodic in any direction $L$,  while $\overline T_2$ is 
ergodic in any irrational direction.   
\end{example}

\section{The structure and realizations of $\ce_T$ and $\cw_T$} \label{s:rdstruct} 
In this section we turn to the question of what sets can arise as $\ce_T$ and $\cw_T$.  
Since every direction $L$ corresponds to a subgroup of $\R^d$, 
it follows 
$\ce_T=\cw_T=\emptyset$ for $\R^d$-actions $T$ that are not ergodic 
(see Section~\ref{ss:subaction}).  
Analogously, since completely rational directions $L$ correspond to subgroups of $\Z^d$,
 we have that for a non-ergodic $\Z^d$-action $T$, $\ce_T$, and therefore $\cw_T$, 
can contain not contain any  completely rational directions.   
More generally, if $T$ is a non ergodic $\Z^d$-action, then 
\eqref{tosszero} of Corollary~\ref{c:noatomatzero} does not hold, so $\sigma_0^T$ has an atom at $\vec 0$.
Thus,  $L^\perp$ is trivially a wall for every direction $L\in\mathbb G_d$,  
and $\ce_T=\cw_T=\emptyset$.
Therefore for the remainder of the section we focus our attention on ergodic actions.  We use the spectral characterizations of directional properties that we provided in previous sections to prove structure theorems analogous to the results in \cite{Pugh}.  

\subsection{The case $d=2$}
\begin{thm}\label{t:2dimstructure}
If $T$ is an ergodic  $\R^2$- or $\Z^2$-action, then the set 
 $\ce^c_T$ of non-ergodic {\rm  (}$1$-dimensional{\rm )} directions 
  is at most countable. 
  Similarly, if $T$ is weak mixing, then the set  $\cw^c_T$ of non-weak mixing 
{\rm  (}$1$-dimensional{\rm )} directions is at most countable.
\end{thm}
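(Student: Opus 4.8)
The plan is to reduce both statements to a single counting fact about walls for the relevant reduced (or aperiodic) spectral measure. By the spectral characterizations already established---Theorem~\ref{XX} for $\R^2$-actions and Theorem~\ref{XXZ2} for $\Z^2$-actions---a direction $L\in\G_{1,2}$ fails to be ergodic precisely when $L^\perp$ carries positive mass, and fails to be weak mixing precisely when some affine translate $L^\perp+\vec\ell$ (equivalently, some wall perpendicular to $L$) carries positive mass. So in both cases I want to bound the number of directions $L$ for which a line perpendicular to $L$ is a wall for a fixed finite measure. First I would treat the $\R^2$ ergodicity case as the model argument and then indicate the easy modifications for the other three cases.

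For the ergodicity statement with $G=\R^2$: a non-ergodic direction $L$ corresponds to a line $L^\perp$ through the origin with $\varsigma_0^T(L^\perp)>0$. Distinct directions $L$ give distinct lines $L^\perp$ through $\vec 0$, and any two such lines meet only at $\vec 0$. Since $\varsigma_0^T$ is a finite measure, for each $k\ge 1$ there can be at most finitely many pairwise-almost-disjoint sets of measure exceeding $1/k$; more precisely, two distinct lines through the origin intersect in $\{\vec 0\}$, so their positive parts away from the origin are disjoint, and a finite measure admits only countably many disjoint sets of positive measure. Hence $\ce_T^c$ is at most countable. For weak mixing, a non-weak-mixing direction $L$ means some affine line $L^\perp+\vec\ell$ is a wall. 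Here distinct directions still give distinct perpendicular directions $L^\perp$, so any two of the offending lines are non-parallel and again meet in a single point; by the same finiteness-of-disjoint-positive-measure-sets argument one gets only countably many directions. For the $\Z^2$ cases I would run the identical argument with $\sigma_0^T$ in place of $\varsigma_0^T$, working on $\T^2$ via the projection $\pi$, noting that the images $\pi(L^\perp)$ of distinct one-dimensional directions, intersected away from overlaps, again contribute disjoint sets of positive measure.

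I expect the main technical obstacle to be the weak-mixing case, where the walls are affine (not through the origin), so distinct directions no longer automatically yield disjoint sets---one must use that the perpendicular \emph{directions} $L^\perp$ are distinct, hence the lines are non-parallel and pairwise intersect in at most one point, which has measure zero unless it is itself an atom. A careful statement must handle the possibility of atoms lying at these intersection points, but since $\sigma_0^T$ (resp. $\varsigma_0^T$) is finite it has only countably many atoms, so these contribute at most countably many extra directions and do not affect the conclusion. The clean way to package all four cases is the elementary lemma that a finite Borel measure on $\R^2$ (or $\T^2$) can assign positive mass to at most countably many lines of pairwise distinct directions: group the lines by the dyadic scale of their measure and observe that within each scale only finitely many can have pairwise intersections of measure zero while each carrying mass above the scale, since their total mass is bounded by the finite total mass. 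I would state and prove this lemma first, then deduce all four assertions of the theorem as immediate corollaries via Theorem~\ref{XX} and Theorem~\ref{XXZ2}.
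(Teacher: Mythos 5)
Your proposal is correct and follows essentially the same route as the paper: reduce to the spectral characterizations of Theorems~\ref{XX} and~\ref{XXZ2}, observe that the walls attached to distinct one-dimensional directions are non-parallel lines meeting in a single point of measure zero, and conclude countability from finiteness. The only cosmetic difference is the last step: the paper converts mutual singularity of the wall measures into orthogonality of the corresponding functions via Proposition~\ref{sst} and invokes separability of $L^2(X,\mu)$, whereas you run the pigeonhole directly on the finite spectral measure; both are fine.

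One sub-claim in your third paragraph is false, although it does not damage the proof. You assert that atoms of $\sigma_0^T$ lying at the intersection points of affine walls ``contribute at most countably many extra directions.'' In fact a single atom at $\vec b\ne\vec 0$ would make \emph{every} direction $L$ fail to be weak mixing, since for each $L$ there is an affine line perpendicular to $L$ passing through $\vec b$ (this is exactly the content of Proposition~\ref{p:nwm}); so countably many atoms could a priori ruin uncountably many directions. The correct way to dispose of this case is simply to note that the weak mixing hypothesis forces the reduced spectral measure to be non-atomic (Corollary~\ref{c:noatomatzero}), so the intersection points automatically carry zero mass and the atom case never arises.
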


\begin{proof}
By Theorems~\ref{XX} (for $\R^2$) and \ref{XXZ2} (for $\Z^2$), 
for any two non-parallel $L_1,L_2\in\mathcal E_T^c$, $L_1^\perp$ and $L_2^\perp$ are walls for 
$\sigma_0^T$. But $\sigma_0^T(L_1^\perp\cap L_2^\perp)=\sigma_0^T(\{\vec 0\})=0$, 
since $T$ is ergodic,
and therefore the respective wall measures are mutually singular.
By Proposition~\ref{sst},
 the corresponding invariant functions $f_1$ and $f_2$ (modulo constant multiples)
 are orthogonal in $L^2(X,\mu)$.  Since $L^2(X,\mu)$ is separable, there can only be countably many such functions.  
A similar argument holds if
$T$ is weak mixing.
It also follows from Theorem~\ref{p:ergiswm} (below).  
\end{proof}  

\begin{example}\label{pintil}
Theorem~\ref{t:2dimstructure} has the following easy application to the Conway-Radin pinwheel tiling
dynamical system \cite{CR}. This is a uniquely ergodic, weak mixing  $\R^2$-action 
$T$ on the set $X$ of all pinwheel tilings.
The action $T=\{T^\vt\}_{\vt\in\R^2}$ has the property that it is isomorphic to all of its rotations. In particular, 
if $M_\theta$ denotes the $2\times 2$ rotation matrix by $\theta$, then
there is a measure preserving transformation $R_\theta$ on $X$ (it rotates tilings) 
so that $R_\theta T^{M_\theta\vt}=T^\vt R_\theta$.
It follows that if $L$ is a non weak mixing 1-dimensional direction (i.e., there is a wall in direction
$L^\perp$) then so is $M_\theta L$ for any $\theta$. 
But since the number of non weak mixing directions can be at most countable, 
$T$ must weak mixing (and thus ergodic) in every direction.  
\end{example}

\subsection{The case $d>2$}

Propositions~\ref{super1} and \ref{super2} 
show that any result analogous to Theorem~\ref{t:2dimstructure} in higher dimensions will require more care.
We start by introducing some terminology that will 
allow us to describe non-ergodic and non-weak mixing directions efficiently.  
\begin{dfn}
Call a set of directions $\mathcal L\subseteq\G_d$ 
{\em concise} if for all $L,L'\in\mathcal L$ with $L\subseteq L'$ implies $L=L'$.
\end{dfn}

\begin{dfn}
Let $\cl\subseteq\G_d$. A direction $L\in\G_d$ is 
{\em subordinate} to $\mathcal L$ if $L\subseteq L'$ for some $L'\in\mathcal L$.
\end{dfn}

Our main result here is the following.

\begin{thm}\label{whatdirections}
Let $T$ be an ergodic $\R^d$ or $\Z^d$-action.  Then there exist unique, and 
at most countable, concise sets 
$\mathcal N\ce_T\subseteq\G_d$  and $\mathcal N\cw_T\subseteq\G_d$
so that $L\in\ce^c_T$ if and only if $L$ is subordinate to $\mathcal N\ce_T$ and $L\in\cw_T^c$ if and only if $L$ is 
subordinate to  $\mathcal N\cw_T$.
\end{thm}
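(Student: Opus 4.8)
The plan is to treat both directional properties through the single mechanism of walls for a spectral measure, reducing the $\Z^d$ case to the $\R^d$ case. For an $\R^d$-action I would work with $\varsigma:=\varsigma_0^T$; for a $\Z^d$-action I would apply the identical analysis to $\widetilde T$ via $\varsigma_\co^{\widetilde T}$ and Proposition~\ref{likeXX} (equivalently to $\sigma_0^T$ on $\T^d$ via Theorem~\ref{XXZ2}). The governing poset fact is that, by Propositions~\ref{super1} and \ref{super2}, both $\ce_T^c$ and $\cw_T^c$ are downward closed under inclusion: if $L\subseteq L'$ and $L'$ is non-ergodic (resp.\ non-weak mixing), then so is $L$. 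Consequently the natural candidates for $\mathcal N\ce_T$ and $\mathcal N\cw_T$ are the \emph{maximal} elements of $\ce_T^c$ and $\cw_T^c$, and the whole proof reduces to showing these maximal sets lie above every element, are concise, and are at most countable.

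For ergodicity I would use Theorem~\ref{XX} to identify $L\in\ce_T^c$ with the condition $\varsigma(L^\perp)>0$, so that non-ergodic directions correspond under $\perp$ (which reverses inclusion) to positive-mass linear subspaces $M=L^\perp$. Since the dimension of a subspace is a nonnegative integer, every descending chain of positive-mass subspaces terminates, so each positive-mass subspace contains a \emph{minimal} one; dually every $L\in\ce_T^c$ sits under a maximal non-ergodic direction, giving both the existence-above and the subordinate-generating property. For countability I would observe that two distinct minimal positive-mass subspaces $M_1\ne M_2$ meet in a proper subspace of each, so $\varsigma(M_1\cap M_2)=0$ by minimality; hence the restrictions $\varsigma|_{M_i}$ are pairwise mutually singular, and by the last clause of Proposition~\ref{sst} the functions $f_i$ with $\sigma_{f_i}=\varsigma|_{M_i}$ are pairwise orthogonal in the separable space $L^2(X,\mu)$. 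This forces at most countably many minimal subspaces, so $\mathcal N\ce_T:=\{M^\perp: M\text{ minimal, }\varsigma(M)>0\}$ is concise and countable and coincides with the maximal elements of $\ce_T^c$.

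For weak mixing the scheme is the same, but Theorem~\ref{XX} now characterizes $L\in\cw_T^c$ by the existence of a wall (an affine subset) parallel to $L^\perp$, so I must work with proper walls in place of linear subspaces. The central lemma will be that $\varsigma$ has at most countably many proper walls: distinct proper walls have null intersection (a nonnull intersection would be a strictly lower-dimensional wall contained in a proper wall, a contradiction), and the same mutual-singularity/orthogonality argument bounds their number. Each proper wall $W$, parallel to a subspace $M_W$, yields a non-weak-mixing direction $L_W:=M_W^\perp$; conversely any wall witnessing $L\in\cw_T^c$ contains a proper subwall (descend in dimension), whence $L\subseteq L_W$ for that subwall. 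Thus the countable family $\{L_W\}$ is subordinate-generating for $\cw_T^c$, every maximal element of $\cw_T^c$ must equal some $L_W$, and $\mathcal N\cw_T$ is again concise and countable. In both cases uniqueness is the routine poset argument: if $\mathcal N$ and $\mathcal N'$ are concise subordinate-generating sets, each $L\in\mathcal N$ lies below some $L'\in\mathcal N'$ which lies below some $L''\in\mathcal N$, and conciseness forces $L=L'=L''$, giving $\mathcal N\subseteq\mathcal N'$ and, symmetrically, equality.

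The step I expect to be most delicate is the weak-mixing countability, specifically controlling the \emph{affine} walls: I must check that reducing a witnessing wall to a proper subwall preserves the direction (the subwall stays parallel to a subspace containing $L^\perp$), and, in the $\Z^d$ setting, that the images $\pi(L^\perp+\vec\ell)\subseteq\T^d$ are measurable enough for the intersection-null argument. Both points are handled most cleanly by lifting to $\R^d$ and running the argument for $\widetilde T$ with $\varsigma_\co^{\widetilde T}$, where walls are honest closed affine subspaces; the equivalence $\varsigma_\co^{\widetilde T}\sim\widetilde{\sigma_0^T}$ from Theorem~\ref{unit1} then transfers the conclusion back to $T$.
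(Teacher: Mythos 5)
Your proof is correct and follows essentially the same route as the paper: the paper packages the countability of proper walls as a Lebesgue-type decomposition of $\sigma_0^T$ into wall measures by dimension (Proposition~\ref{p:ctbl}), while you extract only the countability of minimal positive-mass subspaces and proper walls, but the underlying mechanism --- distinct proper walls meet in null sets, hence carry mutually singular pieces of the spectral measure, hence by Proposition~\ref{sst} and separability of $L^2$ (or, as in the paper, by finiteness of the measure) there are only countably many --- is identical. Your definition of $\mathcal N\ce_T$ and $\mathcal N\cw_T$ as the maximal elements of $\ce_T^c$ and $\cw_T^c$, together with the standard poset argument for uniqueness, matches what the paper obtains after removing redundancies from the orthocomplements of the carriers.
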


The proof of Theorem~\ref{whatdirections} requires an extension of the Lebesgue decomposition 
theorem to wall measures.
This idea is implicit in \cite{Pugh}.

\begin{prop}\label{p:ctbl}
Any finite Borel measure $\sigma$ on $\R^d$ or $\T^d$ can be written as a sum
$
\sigma=\sigma_0+\sigma_1+\dots+\dots+\sigma_d
$
where for 
each $e=0,1,\dots,d$, the measure 
$\sigma_e$ is either zero, or 
a sum of at most countably many  
$e$-dimensional wall-measures with distinct (but not necessarily disjoint)
carriers. This decomposition is unique.
\end{prop}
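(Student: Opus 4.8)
The plan is to build the decomposition by peeling off pieces in order of \emph{decreasing} dimension, from $e=d$ down to $e=0$, each step extracting the part of the measure that is genuinely $e$-dimensional. The single geometric fact driving everything is: if a finite measure $\mu$ assigns zero mass to every affine subspace of dimension $<e$, then any two distinct $e$-dimensional walls $P\neq P'$ for $\mu$ satisfy $\mu(P\cap P')=0$. In $\R^d$ this holds because $P\cap P'$ is an affine subspace of dimension $<e$. In $\T^d$ (where a wall is the image $\pi(V+\vec c)$ of an affine subspace of $\R^d$) one takes $\Z^d$-periodic preimages and uses $\pi(A\cap B)=\pi(A)\cap\pi(B)$ for $\Z^d$-periodic $A,B$ to see that $\pi^{-1}(P)\cap\pi^{-1}(P')$ is a countable union of affine subspaces of dimension $<e$, whence $P\cap P'$ is a countable union of walls of dimension $<e$, each $\mu$-null by hypothesis. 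Consequently such $e$-dimensional walls are pairwise $\mu$-a.e.\ disjoint, so, $\mu$ being finite, only countably many carry positive mass. I expect the verification of this intersection fact in $\T^d$, where walls need not be closed, together with the well-definedness of the extraction below, to be the main technical obstacle.

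The engine of the construction is a \emph{top-dimensional extraction} step. Given a finite Borel measure $\mu$ concentrated on a countable union of affine subspaces of dimension $\le e$, set
\[
s=\sup\{\mu(A): A\text{ a countable union of affine subspaces of dimension }\le e-1\}.
\]
Choosing $A_n$ realizing the supremum and letting $A_*=\bigcup_n A_n$ gives one such set with $\mu(A_*)=s$; define $\sigma_e:=\mu|_{A_*^c}$ and $\rho:=\mu|_{A_*}$. For any affine subspace $Q$ with $\dim Q\le e-1$, the set $Q\cup A_*$ is again a countable union of $\le(e-1)$-dimensional subspaces, so $\mu(Q\cup A_*)\le s$ forces $\sigma_e(Q)=\mu(Q\cap A_*^c)=0$. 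Thus $\sigma_e$ charges no affine subspace of dimension $\le e-1$, while $\rho$ is concentrated on a countable union of such subspaces. Since $\sigma_e\le\mu$ is concentrated on a countable union of $\le e$-dimensional subspaces but charges none of dimension $\le e-1$, it lives on the $e$-dimensional members $\{P_i\}$ of that family; by the geometric fact the $P_i$ have $\sigma_e$-null overlaps, so $\sigma_e=\sum_i\sigma_e|_{P_i}$, and each summand is a proper $e$-dimensional wall measure (proper exactly because $\sigma_e$ charges no lower-dimensional wall) with carrier $P_i$, the $P_i$ being distinct.

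Existence follows by iteration. Starting from $\mu=\sigma$ (concentrated on the single $d$-dimensional ``subspace'' $\R^d$, resp.\ $\T^d$), apply the extraction at $e=d$ to split off $\sigma_d$ and a remainder $\rho_d$ concentrated on a countable union of $\le(d-1)$-dimensional subspaces; apply it to $\rho_d$ at $e=d-1$, and so on down to $e=0$, where the leftover is concentrated on countably many points and is the atomic measure $\sigma_0$. The telescoping sum gives $\sigma=\sigma_0+\cdots+\sigma_d$ with each $\sigma_e$ of the required form.

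For uniqueness, first note that an $e$-dimensional wall measure $\nu$ with carrier $P$ assigns zero mass to every affine subspace $Q$ of dimension $\le e$ other than $P$: indeed $\nu(Q)=\nu(Q\cap P)$, and $Q\cap P$ is a subspace of $P$ of dimension $<e$, hence $\nu$-null by properness. Therefore in \emph{any} decomposition of the required type, $\sigma_d$ charges no affine subspace of dimension $\le d-1$, whereas $\sigma_0+\cdots+\sigma_{d-1}$ is concentrated on a countable union of such subspaces. The split of $\sigma$ into a part charging no $\le(d-1)$-dimensional subspace plus a part concentrated on a countable union of them is unique: if $\sigma=\mu_1+\mu_2=\mu_1'+\mu_2'$ are two such splits with $\mu_2,\mu_2'$ concentrated on $B,B'$, then on $(B\cup B')^c$ both $\mu_1,\mu_1'$ equal $\sigma$ while on $B\cup B'$ both vanish, giving $\mu_1=\mu_1'$. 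Hence $\sigma_d$ is determined by $\sigma$; subtracting it and repeating the argument on $\sigma-\sigma_d$ at dimension $d-1$, and inducting downward, pins down each $\sigma_e$ uniquely.
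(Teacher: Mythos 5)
Your proof is correct, and it takes a recognizably different route from the paper's. The paper builds the decomposition bottom-up: it first splits off the atomic part $\sigma_0$ by Lebesgue decomposition, then, assuming inductively that the remainder $\sigma'_e$ has no walls of dimension $<e$, shows by a pigeonhole/finiteness argument that $\sigma'_e$ has at most countably many $e$-dimensional walls (distinct such walls meet in lower-dimensional sets, which are $\sigma'_e$-null), and takes $\sigma_e$ to be the sum of the corresponding restrictions. You instead work top-down, at each dimension $e$ extracting the part of the measure that charges no affine subspace of dimension $\le e-1$ via a supremum over countable unions of such subspaces, and then using the same null-overlap fact to split that part into countably many proper wall measures with distinct carriers. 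Both arguments hinge on the identical geometric observation that, in the absence of lower-dimensional walls, distinct $e$-dimensional walls are a.e.\ disjoint. What your route buys: (i) you actually prove the uniqueness assertion --- the paper's proof constructs the decomposition and stops, never addressing why it is unique, whereas your splitting argument ("no mass on low-dimensional subspaces" versus "concentrated on a countable union of them") settles it cleanly dimension by dimension; and (ii) you are explicit about the torus case, where two distinct walls are projections of affine subspaces and intersect in a \emph{countable union} of lower-dimensional walls rather than a single one --- a point the paper's proof glosses over. The paper's version is marginally more economical if one only wants existence.
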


\begin{proof} 
Using Lebesgue decomposition we write  $\sigma=\sigma_0+\sigma'_1$ where $\sigma_0$ is the discrete part of 
$\sigma$.  A standard separability argument yields that $\sigma_0$ is a (possibly trivial) countable sum of atoms. In other words, $\sigma_0$ is 
a countable sum of $0$-dimensional wall measures. 

We proceed in a similar manner by induction. Let
$
\sigma=\sigma_0+\sigma_1+\dots+\sigma_{e-1}+\sigma'_e,
$
where $e<d$, and such that 
for each $0\le k\le e-1$, $\sigma_k$ is either the zero measure or a countable sum of 
 $k$-dimensional wall measures with distinct carriers, and such that $\sigma'_e$ has no walls 
of dimension $0\le k< e$. 

Suppose $\ca_e$ is the set of $e$-dimensional walls for $\sigma'_e$. If $\ca_e$ is empty then $\sigma_e$ is the zero measure.  
If not, we show $\ca_e$ is (at most) countable.  For if we  suppose not, then 
by the pigeonhole principle, there exists $\epsilon>0$ and an 
uncountable subset $\ca'_e\subseteq \ca_e$ with the property that 
$\sigma'_e(P)\ge\epsilon$ for all $P\in \ca_e'$. Then for any  countable subset 
$\ca_e''\subseteq \ca_e'$,
$\sigma'_e \left(\bigcup_{P\in \ca_e''} P \right) = \sum_{P\in \ca_e''}\sigma'_e(P)=\infty$,
where equality follows from  $\sigma'_e(P_1\cap P_2)=0$ for $P_1,P_2\in \ca_e''$
distinct. This is because 
$\sigma'_e$ has no $k$-dimension walls
 for any $k<e$. But this contradicts the fact that $\sigma'_e$
 is a finite measure.

Now we let $\sigma_e$ be the sum of the wall-measures whose distinct carriers are the walls $P\in \ca_e$. By the above, it is a countable sum.

\end{proof}

\begin{proof}[Proof of Theorem~{\rm \ref{whatdirections}}]
Use Proposition~\ref{p:ctbl} to write 
$\sigma_0^T=\sigma_0+\sigma_1+\dots+\dots+\sigma_d$,
and for $0\le i\le d$, let $\ca_i$ denote the (possibly empty) set of distinct carriers $P=K+\vec \ell$, $K\in \G_{i,d}$ and 
$\vec \ell\in K^\perp$, 
for $\sigma_i$.
Define the subset of $\mathbb G_{d-i,d}$ given by the orthocomplements of the subspaces in $\ca_i$, namely
$
\ca_i^\perp=\{L=P^\perp: P\in \ca_{i}\}\subseteq \mathbb G_{d-i,d}.
$
This is the set of non-ergodic $(d-i)$-dimensional directions for $T$ and 
using 
the same argument as Theorem~\ref{t:2dimstructure}
we know that this is a countable set. 
We now define
$
\mathcal N\ce_T=\bigcup_{i=1}^d\left(\ca_i^\perp\,\,
\backslash\bigcup_{0\le k<i}\ca_k^\perp\right).
$
This set is the countable union of all the countable non-ergodic directions with redundancies, which are a 
consequence of Propositons~\ref{super1} and \ref{super2}, having been removed.

We construct $\mathcal N\cw_T$ in a similar way.  Uniqueness of both sets follows from the uniqueness of the orthocomplements restricted to $\mathbb G_d$.

\end{proof}

\begin{cor}\label{whatdirections1}  
If $T$ is ergodic then $\ce_T\neq\emptyset$.  
\end{cor}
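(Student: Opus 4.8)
The plan is to prove the stronger statement that the top-dimensional direction $L=\R^d\in\G_{d,d}$ always lies in $\ce_T$ whenever $T$ is ergodic, from which $\ce_T\neq\emptyset$ is immediate. The whole point is geometric: the orthocomplement of the full direction is $(\R^d)^\perp=\{\vec 0\}$, and the reduced spectral measure of an ergodic action puts no mass at $\vec 0$.

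The cleanest route I would take is the spectral characterization already in hand. Set $L=\R^d$, so $L^\perp=\{\vec 0\}$. In the $\R^d$ case, Theorem~\ref{XX} says $L\in\ce_T$ if and only if $L^\perp=\{\vec 0\}$ is not a wall for $\varsigma_0^T$, i.e. if and only if $\varsigma_0^T(\{\vec 0\})=0$; in the $\Z^d$ case, Theorem~\ref{XXZ2} says $L\in\ce_T$ if and only if $\pi(L^\perp)=\{\vec 0\}\subseteq\T^d$ is not a wall for $\sigma_0^T$, i.e. if and only if $\sigma_0^T(\{\vec 0\})=0$. By Corollary~\ref{c:noatomatzero}, an ergodic action has no atom of its reduced spectral measure at $\vec 0$, so in both settings the point $\{\vec 0\}$ fails to be a wall. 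Hence $\R^d\in\ce_T$, which is exactly what is needed. (For $\R^d$-actions one may instead note directly from Definition~\ref{dirergwm} that $T^{|\R^d}=T$ is ergodic by hypothesis; the spectral argument is what unifies the two cases.)

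Alternatively, and perhaps more in the spirit of placing this right after the structure theorem, I would deduce it from Theorem~\ref{whatdirections}. If $\ce_T$ were empty then in particular $\R^d\in\ce_T^c$, so $\R^d$ would be subordinate to the concise set $\mathcal N\ce_T$; since the only direction containing $\R^d$ is $\R^d$ itself, this forces $\R^d\in\mathcal N\ce_T$. Tracing back the construction of $\mathcal N\ce_T$ from the wall decomposition of $\sigma_0^T$, membership of $\R^d$ would mean that $\{\vec 0\}=(\R^d)^\perp$ is a $0$-dimensional wall, i.e. an atom of $\sigma_0^T$ at $\vec 0$, again contradicting ergodicity through Corollary~\ref{c:noatomatzero}. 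I expect essentially no obstacle here, as the statement is really a consistency check on the theory; the only point requiring a little care is handling the $\Z^d$ and $\R^d$ settings uniformly, and this is already arranged since Theorem~\ref{XXZ2} pushes the directional characterization for $\Z^d$-actions down to $\sigma_0^T$, so the no-atom-at-$\vec 0$ fact applies verbatim in both cases.
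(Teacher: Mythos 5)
Your proof is correct, and your second argument (ruling out $\R^d\in\mathcal N\ce_T$ via the absence of an atom of $\sigma_0^T$ at $\vec 0$, by Corollary~\ref{c:noatomatzero}) is exactly the deduction the paper intends when it states this as an immediate corollary of Theorem~\ref{whatdirections}. Your first argument is the same fact read off directly from Theorems~\ref{XX} and \ref{XXZ2} with $L=\R^d$, $L^\perp=\{\vec 0\}$, which even shows the corollary does not actually need the structure theorem.
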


The natural question to ask now is which countable, concise subsets of $\G_d$ can be realized as $\mathcal N\ce_T$ for some ergodic $T$?  We begin with the following result which generalizes our discussion from Section~\ref{ss:subaction}.    
\begin{prop}\label{p:nwm}
If $T$ is an ergodic $\R^d$ or $\Z^d$-action that is not weak mixing, then  
$\mathcal N\ce_T\neq\emptyset$ and $\mathcal N\cw_T=\{\R^d\}$, (i.e., $\mathcal W_T^c=\mathbb G_d$).  
 \end{prop}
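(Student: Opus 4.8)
The plan is to establish the two assertions separately, extracting both from the spectral characterizations in Theorem~\ref{XXZ2} (for $\Z^d$) and Theorem~\ref{XX} (for $\R^d$) together with the fact that $T$ is ergodic but not weak mixing. First I would observe that since $T$ is ergodic, Corollary~\ref{c:noatomatzero} gives $\sigma_0^T=\sigma^T-\sigma^T(\{0\})\delta_0$, so $\sigma_0^T$ has no atom at $\vec 0$; and since $T$ is \emph{not} weak mixing, the same corollary says $\sigma_0^T$ has at least one atom, say at $\gamma\in\widehat G\setminus\{\vec 0\}$. This single nonzero atom is the engine driving both claims.

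For the weak mixing statement, I would argue that every direction $L\in\G_{e,d}$ with $e\ge 1$ is non-weak-mixing. Indeed, given the atom $\gamma\ne\vec 0$, I want to exhibit an affine subset $P$ perpendicular to $L$ that is a wall for $\sigma_0^T$, i.e. contains the atom. In the $\R^d$ case, writing $\gamma=\vec a$, I take $\vec\ell$ to be the orthogonal projection of $\vec a$ onto $L$ and $P=L^\perp+\vec\ell$; then $\vec a\in P$, so $\sigma_0^T(P)\ge\sigma_0^T(\{\vec a\})>0$, and by Theorem~\ref{XX} this means $L\notin\cw_T$. In the $\Z^d$ case the same idea runs through Theorem~\ref{XXZ2}: the atom of $\sigma_0^T$ lies in some affine subset $\pi(L^\perp)+\pi(\vec\ell)$ with $\vec\ell\in L$, making it a wall, so again $L\notin\cw_T$. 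Since this holds for every direction, $\cw_T^c=\G_d$, and the concise set of maximal non-weak-mixing directions is just the top-dimensional one, giving $\mathcal N\cw_T=\{\R^d\}$.

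For the ergodicity statement I must produce at least one non-ergodic direction, i.e. show $\mathcal N\ce_T\ne\emptyset$. Here I use the atom $\gamma\ne\vec 0$ directly: the line (or affine subspace) through $\gamma$ is already a proper wall located off $\vec 0$. Concretely, in the $\R^d$ picture, let $L=\langle\gamma\rangle^\perp\in\G_{d-1,d}$ so that $L^\perp=\langle\gamma\rangle$ contains $\gamma$; then $\sigma_0^T(L^\perp)\ge\sigma_0^T(\{\gamma\})>0$, so $L^\perp$ is a wall and by Theorem~\ref{XX} we get $L\in\ce_T^c$. (In the $\Z^d$ case one similarly chooses $L$ so that $\pi(L^\perp)$ captures the atom and invokes Theorem~\ref{XXZ2}.) Thus $\ce_T^c$ is nonempty, and by Theorem~\ref{whatdirections} the concise generating set $\mathcal N\ce_T$ is nonempty as well.

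The one point requiring genuine care — and what I expect to be the main obstacle — is the $\Z^d$ case of the ergodicity claim, where one must verify that the chosen $L$ really does give $\pi(L^\perp)$ passing through the \emph{projected} atom and that this remains a proper (lowest-dimensional) wall after passing to $\T^d$; the projection $\pi$ can merge or shift structure, and the atom of $\sigma_0^T$ sits in $\T^d$ rather than $\R^d$. I would handle this by selecting $L$ so that $\pi(L^\perp)$ is the smallest affine subset of $\T^d$ perpendicular to some $L$ that contains the atom, which is automatic if one takes $L^\perp$ spanned by a suitable rational or real direction through a lift of the atom; the existence of such an $L$ is guaranteed because any point of $\T^d$ lies on at least the $0$-dimensional wall $\{\gamma\}$, and $\{\gamma\}=\pi((\R^d)^\perp+\gamma)$ corresponds to $L=\R^d$ only in the degenerate case, so a proper lower-dimensional choice exists whenever $d\ge 1$. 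This is really just bookkeeping with the wall/carrier formalism of Definition~\ref{wallZ}, but it is where the argument must be stated precisely rather than waved through.
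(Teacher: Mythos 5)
Your argument is correct and follows essentially the same route as the paper: the nonzero atom of $\sigma_0^T$ guaranteed by ergodicity plus non-weak-mixing lies in some affine coset perpendicular to \emph{every} direction (so $\cw_T^c=\G_d$) and in some linear subspace $L^\perp$ (producing a non-ergodic direction), the paper handling the $\R^d$ case via Proposition~\ref{p:easyobs} and the $\Z^d$ case via exactly the wall argument you give. Your closing concern about properness is unnecessary --- Theorems~\ref{XX} and~\ref{XXZ2} only require the relevant coset to be a wall, not a proper one, and any linear subspace containing a lift of the atom suffices --- but this does not affect correctness.
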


\begin{proof}
For $\R^d$-actions this is a case of Proposition~\ref{p:easyobs},
since every direction $L$ corresponds to a subgroup action of $\R^d$.   

If $T$ is a $\Z^d$-action that is 
not weak mixing, then $\sigma_0^T$ has atoms 
(corresponding to the nonzero 
eigenvalues). 
Since each atom must lie in 
$\pi\left(K\right)\subseteq\mathbb T^d$ for some $K\in\G_{d-1,d}$, we have $\pi\left(K\right)$ is a wall for $\sigma_0^T$. 
By Theorem~\ref{XXZ2}, this implies $L=K^\perp\in \G_{1,d}$ is a non-ergodic direction for $T$. 
Moreover, for any atom $\vec b$ for  
$\sigma_0^T$, 
and for any $L\in \G_d$, there is an $\vec \ell\in L$ so that 
$\vec b\in \pi(L^\perp+\vec \ell\,)$. Then $\pi(\vec \ell\,)\in\mathbb T^d$ is a  direction $L$ eigenvalue for $T$. 
\end{proof}

\begin{example}\label{pentil}
The Penrose tiling dynamical system $T$ consists of $\R^2$ acting on the set of $X$ all Penrose tilings. 
Penrose tilings can be defined in (at least) three distinct ways: 
by a local rule (a matching rule), by a tiling substitution (an inflation), and in terms of model sets
(see \cite{BG}). The Penrose tiling dynamical system $T$ 
is minimal, uniquely ergodic, and has pure discrete spectrum 
($\sigma_0^T$ is atomic) with 
$\Sigma_T=\Z[e^{2\pi i/5}]$, viewed as a subset of $\R^2$ (see \cite{RRP}).
In particular $T$ is not weak mixing. 
Thus a direction $L$ is ergodic if and only if $L^\perp\cap
\Z[e^{2\pi i/5}]=\{0\}$, and there are no weak mixing directions.  
Almost identical considerations apply to all model set dynamical systems,
all of which have pure discrete spectrum (see \cite{BG}). These considerations also apply to the chair 
tilings \cite{RRTC}, which have pure discrete spectrum 
$\Sigma_T=\Z[1/2]^2$, so in this case, $L$ is an ergodic direction if and only if 
$L^\perp\cap\Z[1/2]^2=\{0\}$, and again there are no weakly mixing directions. 

Like the chair tilings, the table tilings, also discussed in  \cite{RRTC}, have 
$\Sigma_T=\Z[1/2]^2$, 
so again there are no weak mixing directions. But the table tilings  have ``mixed spectrum'' in the sense that  
the spectral measure $\sigma_0^T$ has a nontrivial singular continuous component:
$\sigma'\ll\sigma_0^T$. From this it follows that 
$\ce_T^c\subseteq\{L:L^\perp\cap(\Z[1/2]^2)\backslash\{0\}\ne\emptyset\}$. We 
conjecture that equality holds, which would follow from $\sigma'$ having no walls.
\end{example}

\subsection{Realization}\label{real}  
In this section we  show that any countable concise set
$\cl\subseteq\G_d$ is the set of non-weak mixing (and therefore non-ergodic) directions of a 
weak mixing action of $\Z^d$ and $\R^d$.  

\begin{thm}\label{t:cecomplete}
For any finite or countably infinite concise set $\cl\subseteq\G_d$, 
there is a weak mixing $\R^d$-action $T$ and a weak mixing $\Z^d$-action $S$ with the property that $\cl=\mathcal N\ce_T=\mathcal N\cw_T=\mathcal N\ce_S=\mathcal N\cw_S$.  
\end{thm}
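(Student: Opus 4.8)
The goal is to realize an arbitrary countable concise set $\cl \subseteq \G_d$ as $\mathcal N\ce_T = \mathcal N\cw_T$ for a weak mixing $\R^d$-action $T$ (and likewise a $\Z^d$-action $S$). By Theorems~\ref{XX} and \ref{XXZ2}, the non-ergodic and non-weak mixing directions are determined entirely by the walls of the reduced spectral measure $\sigma_0^T$ (resp.\ $\varsigma_0^{\widetilde T}$). So the entire problem reduces to \emph{prescribing the spectral measure}: the plan is to build a weak mixing action whose reduced spectral measure $\sigma_0^T$ has, for each $L \in \cl$, a proper wall in the direction $L^\perp$, and has \emph{no} other walls (of any dimension). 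Since each $L \in \cl$ has dimension $e \ge 1$, the corresponding wall $L^\perp$ is a positive-dimensional affine subspace, so the wall-measure carried on it must be \emph{non-atomic}; this automatically forces weak mixing (by Corollary~\ref{c:noatomatzero}, $\sigma_0^T$ has no atoms), which is exactly why weak mixing and ergodicity coincide directionally here and why $\mathcal N\ce_T = \mathcal N\cw_T$.

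\textbf{Construction via Gaussian actions.} The natural tool, flagged in the introduction, is the Gaussian action construction: any finite symmetric Borel measure $\varsigma$ on $\R^d$ arises as (a component of) the reduced spectral measure of a Gaussian $\R^d$-action, whose maximal spectral type on the first chaos is $\varsigma$ and which is weak mixing precisely when $\varsigma$ is non-atomic. First I would, for each $L \in \cl$ of dimension $e$, choose a non-atomic finite symmetric measure $\mu_L$ supported on $L^\perp$ (e.g.\ a smooth compactly supported density on the $(d-e)$-dimensional subspace $L^\perp$) that has no lower-dimensional walls --- i.e.\ $\mu_L$ gives zero mass to every proper affine subspace of $L^\perp$, which a nondegenerate absolutely continuous measure on $L^\perp$ does. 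Then I would set
\[
\varsigma := \sum_{L \in \cl} c_L\, \mu_L,
\qquad \sum_{L\in\cl} c_L < \infty,\ c_L > 0,
\]
and symmetrize if necessary. The Gaussian $\R^d$-action $T$ with first-chaos spectral type $\varsigma$ is the desired action. For the $\Z^d$-action $S$, I would either restrict a Gaussian $\R^d$-action to $\Z^d$ and pass to the unit suspension (so that $\varsigma_0^{\widetilde S} \sim \widetilde{\sigma_0^S}$ by Theorem~\ref{unit1} recovers the prescribed walls), or directly build a Gaussian $\Z^d$-action with reduced spectral measure on $\T^d$ equal to $\sum_L c_L\, \pi_*\mu_L$.

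\textbf{Verifying the walls are exactly $\cl$.} The key verification has two directions. For each $L \in \cl$, the measure $\varsigma(L^\perp) \ge c_L \mu_L(L^\perp) > 0$, so $L^\perp$ is a wall; conciseness of $\cl$ together with the choice of $\mu_L$ (no lower-dimensional walls, and $\mu_{L'}(L^\perp) = 0$ for $L' \neq L$ since distinct subspaces meet in a lower-dimensional set carrying no $\mu_{L'}$-mass) ensures this wall is \emph{proper} and that $L$ is recorded in $\mathcal N\ce_T$ rather than being subordinate to another element. Conversely I must check there are \emph{no extraneous walls}: any affine subspace $M$ with $\varsigma(M) > 0$ must, by countable additivity and mutual singularity of the $\mu_L$, satisfy $\mu_L(M) > 0$ for some $L$, whence (since $\mu_L$ charges no proper affine subset of $L^\perp$) $M \supseteq L^\perp$, so $M^\perp \subseteq L$ and $M^\perp$ is subordinate to $\cl$. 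This is exactly the statement that $\ce_T^c$ and $\cw_T^c$ consist precisely of the directions subordinate to $\cl$, giving $\mathcal N\ce_T = \mathcal N\cw_T = \cl$ via Theorem~\ref{whatdirections}.

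\textbf{Main obstacle.} The delicate point is arranging that $\varsigma$ has \emph{no walls other than the $L^\perp$}, especially in the $\Z^d$ case. For the $\Z^d$-action one works on $\T^d$, where the projected measures $\pi_*\mu_L$ can overlap or create new coincidental affine alignments modulo $\Z^d$, and where the tilde-averaging $\widetilde{\sigma_0^S} = \sum_{\vn} a_{\vn}\,\sigma_0^S \circ \tau^{\vn}$ spreads each wall over all its integer translates --- so I must ensure the $\Z^d$-periodization of each $\pi_*\mu_L$ still carries no spurious lower-dimensional walls and that translates of different $\mu_L$ stay mutually singular off lower-dimensional sets. Choosing the $\mu_L$ absolutely continuous with respect to Lebesgue measure on $L^\perp$ (hence charging no proper subspace) is what neutralizes this, but the bookkeeping to confirm properness and the absence of accidental walls --- both in $\R^d$ and after projection to $\T^d$ --- is the technical heart of the argument.
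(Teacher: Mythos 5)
Your overall strategy is the same as the paper's: use the Gaussian measure space construction with a base measure built from absolutely continuous measures on the subspaces $L^\perp$, $L\in\cl$, and then read off the walls of the reduced spectral measure via Theorem~\ref{XXZ2}/Theorem~\ref{XX}. However, there is a genuine gap in your verification step. You correctly note that $\varsigma=\sum_L c_L\mu_L$ is only the spectral type of the \emph{first chaos}, but you then check the wall structure only of $\varsigma$ itself. The reduced spectral measure of the Gaussian action is $\sigma_0^T=\exp(\varsigma)-\delta_{\vec 0}=\sum_{n\ge 1}\varsigma^{(n)}/n!$, and the higher convolution powers \emph{do} create additional walls that your argument never examines. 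For instance, $\mu_{L_1}*\mu_{L_2}$ is a wall measure carried on $L_1^\perp+L_2^\perp$, which (when this sum is a proper subspace of $\R^d$) is a wall of $\sigma_0^T$ that is not of the form $L^\perp$ for any $L\in\cl$. Your claim that ``any affine subspace $M$ with $\varsigma(M)>0$ must satisfy $\mu_L(M)>0$ for some $L$'' is true for $\varsigma$ but simply does not apply to $\sigma_0^T$, so the conclusion $\mathcal N\ce_T=\cl$ is not established as written.

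The missing step is exactly the computation the paper carries out: every proper wall of $\sigma^{(n)}$ is a proper wall of some $\sigma_{K_{i_1}}*\cdots*\sigma_{K_{i_n}}$ (with $K_i=L_i^\perp$), whose unique proper wall is the subspace $K_{i_1}+\cdots+K_{i_n}$; its orthocomplement is $L_{i_1}\cap\cdots\cap L_{i_n}\subseteq L_{i_j}$, hence subordinate to $\cl$. So the extra walls coming from the convolution powers are harmless, but only after one checks this. (Alternatively, if you intended to produce an action whose reduced spectral measure is exactly $\varsigma$ rather than $\exp(\varsigma)-\delta_{\vec 0}$, you would need to justify that such an action exists, which is a nontrivial spectral realization problem and not what the Gaussian construction provides.) Your concerns in the last paragraph about periodization on $\T^d$ are sidestepped in the paper by simply taking $S=T^{|\Z^d}$ and invoking Proposition~\ref{p:genrest}, so that the walls of $\sigma_0^S$ are the $\pi$-images of the walls of $\sigma_0^T$; this is cleaner than rebuilding the measure on $\T^d$.
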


\begin{proof}
We first construct the $\R^d$ example. 
 Let $\mathcal L=\{L_1,L_2,\cdots\}$ be a countable concise set of directions in $\R^d$.   Define the set
$
 \mathcal K=\{K_1, K_2,\ldots\}
$
where $K_i=L_i^\perp$ for each $i$.   Note that $\mathcal K$ is also a countable concise set of subspaces of $\R^d$.  
We will construct an $\R^d$-action $T$ with the property that for any wall $P$ for $\sigma_0^T$ there exists $K\in\mathcal K$ so that $K\subseteq P$.  Since $K^\perp=L_j$ for some $j$, this will imply that $L_j\supseteq P^\perp$, and therefore $P^\perp$ is subordinate to $\mathcal L$.  Thus we will have that $\mathcal L=\mathcal N\ce_T$.

We will use the {\em Gaussian Measure Space Construction} (GMC),  starting with 
a finite Borel measure $\sigma$ on $\R^d$ to obtain a {\em Gaussian $\R^d$-action} 
$T$  on a Lebesgue probability space $(X,\mu)$ with 
$\sigma_T=\exp(\sigma)=\sum_{i=0}^\infty \frac{\sigma^{(n)}}{n!}$ (see \cite{KFS}). Here $\sigma^{(n)}=\sigma * \sigma * \ldots * \sigma$ denotes the $n$-fold convolution power of $\sigma$, $n\ge 1$ and $\sigma^0=\delta_{\vec 0}$. 

If  $\dim(K_i)=r_i$ let $\sigma_{K_i}$ denote $r_i$-dimensional Lebesgue measure on $K_i$ and let $\sigma$ be any finite measure equivalent to $\sum_{i} \sigma_{K_i}$.   The action $T$ will be the Gaussian action with $\sigma_0^T=\exp(\sigma)-\delta_{\vec 0}$. Note that since $\sigma$ is non-atomic, $T$ is weak mixing.

We now identify all proper walls for $\sigma_0^T$.  Each one has to be a proper wall for some 
$
\sigma^{(n)}\sim\sum_{i_1,i_2,\ldots,i_n} \sigma_{K_{i_1}}*\sigma_{K_{i_2}}*\ldots*\sigma_{K_{i_n}}
$
thus proper walls for $\sigma^{(n)}$ must be proper walls for some $\sigma_{K_{i_1}}*\sigma_{K_{i_2}}*\ldots*\sigma_{K_{i_n}}$.  It is easy to see that the
subspace $K=K_{i_1}+K_{i_2}+\ldots+K_{i_n}$ is the only proper wall for $\sigma_{K_{i_1}}*\sigma_{K_{i_2}}*\ldots*\sigma_{K_{i_n}}$ because $K_j$ is the only proper wall for  $\sigma_{K_j}$.  Thus $P$ must be of the form $K_{i_1}+K_{i_2}+\ldots+K_{i_n}$.  Therefore $P$ contains every subspace in $\{K_{i_1},{K_{i_2}},\ldots,{K_{i_n}}\}$ and $P^\perp=L_{i_1}^\perp\cap L_{i_2}^\perp\ldots\cap L_{i_n}^\perp\subseteq L_{i_j}$ for $j=1,\ldots n$.

We now set $S=T|^{\Z^d}$, the restriction of $T$ to $\Z^d$.   By Proposition~\ref{p:genrest} we have 
that $\sigma_0^{S}=\sigma_0^T\circ\pi^{-1}$.  Note that the proper walls for $\sigma_0^S$ are projections of 
the proper walls for $\sigma_0^T$ and the result follows for $\Z^d$.  Alternatively the same construction 
as above can be repeated using measures on $\T^d$ instead of $\R^d$ 
to obtain a $\Z^d$-action with the same properties.  
\end{proof}

\begin{example}\label{gbw}
The idea in Example~\ref{ex:BWtype} provides a recipe for making many 
other examples of $\Z^d$ and $\R^d$-actions
with interesting directional properties. The examples that can be made this way 
have more or less the same directional properties 
as the Gaussian actions constructed in  Theorem~\ref{t:cecomplete}. 
Here is one such example. Choose a $1$-dimensional direction $L_1\subseteq\R^3$ and a 
$2$-dimensional direction $L_2\subseteq\R^3$ so that $L_1\cap L_2=\{\vec 0\}$.
Let $P_1,P_2:\R^3\to\R^3$ be linear maps so that $P_1(\R_3)=L_1^\perp$ and 
 $P_2(\R_3)=L_2^\perp$.
Let $S_1$ be a weakly mixing $\R^2$-action that is weakly mixing in every direction, and $S_2$ be a weakly mixing 
$\R$-action. Define $\R^3$-actions $T_1^{\vec t}=S_1^{ P_1\vec t}$ and $T_2^{\vec t}=S_2^{ P_2\vec t}$ 
so that $T_i^\vt x=S_1^{P_1\vt}x$. The actions $T_i$ are not free,
and the spectral measures $\sigma_0^{T_i}$  are wall measures consisting of $\sigma_0^{S_i}$  pushed forward  
by $\widehat P_i$ onto $L_i^\perp$. 
The product $T=T_1\times T_2$, which is free and weak mixing, has $L_1$ and $L_2$ as non ergodic directions 
 (like Example~\ref{ex:BWtype}). Its spectral measure consists of the wall measures 
 $\sigma_0^{T_1}$ and $\sigma_0^{T_2}$,
plus the convolution $\sigma_0^{T_1}*\sigma_0^{T_2}$, which has no walls. 
It follows that 
$\mathcal N\ce_T=\mathcal N\cw_T=\{L_1,L_2\}$.
\end{example}

\smallskip

\noindent {\em Comment $1$:} If $S_1$ and $S_2$ are chosen to have Lebesgue spectrum, then $\sigma_0^{T_1}*\sigma_0^{T_2}$
is equivalent to Lebesgue measure. For this $T$, any direction except $L_1$ or (a subspace of) $L_2$  is a strong mixing direction. But $T$ is not mixing.

\smallskip

\noindent {\em Comment $2$:} It is known that the spectral measure of any {\em free}
 ergodic $G$-action has support $\widehat G$ (see \cite{KT}).
In this example, $G=\widehat G=\R^3$. However, since $T_1$ and $T_2$ are not free, we see here that they can 
have wall measures 
as spectral measures.

\section{Some additional results}\label{six}

\subsection{Directional ergodicity implies directional weak mixing} 

 It follows from Corollary~\ref{whatdirections1} and Proposition~\ref{p:nwm} that if an ergodic $\Z^d$ or $\R^d$-action is 
 not weak mixing, then there exists $L\in\mathbb G_d$ so that $L\in\mathcal E_T$ and $L\in\mathcal N\mathcal W_T$.  
 The main result of this section is to show that this is not possible for weak mixing actions $T$.   
 
\begin{thm}\label{p:ergiswm}  Let $T$ be a weak mixing $\R^d$ or $\Z^d$-action and let $L\in\mathbb G_{e,d}$.  
Then $\ce_T=\cw_T$.  
 \end{thm}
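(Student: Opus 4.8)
The plan is to prove the contrapositive via the spectral characterizations already developed. Recall that $\mathcal W_T \subseteq \mathcal E_T$ always holds (weak mixing in a direction implies ergodicity in that direction, by Proposition~\ref{super1}/\ref{super2}, or directly from the definitions). So the content is the reverse inclusion $\mathcal E_T \subseteq \mathcal W_T$: I must show that if $L \notin \mathcal W_T$, then $L \notin \mathcal E_T$. Suppose $L \notin \mathcal W_T$. By Theorem~\ref{XX} (for $\R^d$) or Theorem~\ref{XXZ2} (for $\Z^d$), there is a nonzero directional eigenvalue, i.e.\ a wall $P = L^\perp + \vec\ell$ (with $\vec\ell \in L$, $\vec\ell \ne \vec 0$ in the $\R^d$ case; similarly $\pi(L^\perp) + \pi(\vec\ell)$ in the $\Z^d$ case) carrying positive reduced spectral mass. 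Equivalently, there is a nonzero direction $L$ eigenfunction $f$ with nonzero eigenvalue.

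The key idea is to \emph{manufacture a direction $L$ invariant function from a direction $L$ eigenfunction}, using the global weak mixing hypothesis. First I would work on the group side. Let $f$ be a direction $L$ eigenfunction for $T$ with eigenvalue $\vec\ell \ne \vec 0$, so $f(T^{\vt}x) = e^{2\pi i(\vec\ell\cdot\vt)} f(x)$ for all $\vt \in L$. The natural move is to consider $g = f \cdot \bar f = |f|^2$, or more usefully $f/f'$ where $f'$ is a second eigenfunction, so that the eigenvalues cancel. Concretely, consider the function $|f|^2 = f\bar f$: for $\vt \in L$ we get $|f|^2(T^{\vt}x) = |f(T^{\vt}x)|^2 = |f(x)|^2$, so $|f|^2$ is direction $L$ invariant. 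The obstacle is that $|f|^2$ could be constant (indeed if $|f|$ is constant a.e., as Lemma~\ref{easyergodic} guarantees for an \emph{ergodic} action, then $|f|^2$ is constant and gives nothing). So the cancellation trick on a single eigenfunction is too crude.

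The better route is spectral. Since $P = L^\perp + \vec\ell$ is a wall for $\sigma_0^T$ (or $\varsigma_{\mathcal O}^{\widetilde T}$), I want to produce a wall \emph{through the origin}, i.e.\ show $L^\perp$ itself is a wall, which by Theorem~\ref{XX}/\ref{XXZ2} gives $L \notin \mathcal E_T$. The mechanism is the symmetry Lemma~\ref{symmetry}: because $T$ is weak mixing, its point spectrum is trivial, but that is not directly what I need. Instead, the plan is to use the eigenfunction $f$ with eigenvalue $\vec\ell$ to produce, via the spectral theorem, a function whose spectral measure lives on $L^\perp = P - \vec\ell$. The clean way: take $f$ direction-$L$ eigenfunction with eigenvalue $\vec\ell$, and form $F = f \cdot \overline{U^{\vt_0}_T f}$ appropriately, or better, use that the convolution square $\sigma_0^T * \check{\sigma_0^T}$ (the spectral measure of $|f|^2$-type functions) must charge $L^\perp$ whenever $\sigma_0^T$ charges some translate $L^\perp + \vec\ell$: indeed $P - P = (L^\perp + \vec\ell) - (L^\perp + \vec\ell) \subseteq L^\perp$. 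Since $T$ is weak mixing there are no atoms off zero to worry about, and I would argue that the corresponding self-convolution (realized as the spectral measure of a genuine nonconstant $L^2_0$ function built from $f$) gives a wall $L^\perp$ for an $L$-invariant function in $L^2_0$, hence $L \notin \mathcal E_T$.

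\textbf{The main obstacle} will be executing this convolution argument rigorously while staying inside $L^2_0$ (for $\Z^d$, inside $L^2_{\mathcal O}$): I must verify that the function whose spectral measure is supported on $L^\perp$ is genuinely nonconstant and $L$-invariant, which is where the weak mixing hypothesis is essential (it is exactly what rules out the degenerate case where the only way to charge $L^\perp$ is through the atom at $\vec 0$, i.e.\ through constants). For $\R^d$ this can be phrased directly through eigenfunctions: if $f$ is a direction $L$ eigenfunction with eigenvalue $\vec\ell \ne \vec 0$, then $T^{\vt_0}$ for suitable $\vt_0 \in L$ is not weak mixing as a $\Z$-action, and one leverages that $T$ globally weak mixing forces the eigenvalue structure to collapse, producing an honest $L$-invariant nonconstant function. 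The $\Z^d$ case follows by transporting the argument to $\widetilde T$ on $L^2_{\mathcal O}$ and invoking Theorem~\ref{unit1} to pass between $\varsigma_{\mathcal O}^{\widetilde T}$ and $\sigma_0^T$, exactly as in the proof of Theorem~\ref{XXZ2}.
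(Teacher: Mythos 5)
There is a genuine gap at the heart of your argument: the convolution step. The measure $\sigma_0^T * \check{\sigma_0^T}$ is the spectral type of the \emph{product} action $T\times T$ on $L^2_0\otimes L^2_0$ (it is the spectral measure of $f\otimes\bar f$), not the spectral measure of any function on $X$ under $T$ itself; in particular it need not be absolutely continuous with respect to $\sigma_0^T$. So showing that $P-P\subseteq L^\perp$ forces the convolution to charge $L^\perp$ only recovers the tautology that if $T^{|L}$ is not weak mixing then $T^{|L}\times T^{|L}$ is not ergodic --- it says nothing about whether $\sigma_0^T$ itself charges $L^\perp$, which is what Theorem~\ref{XX}/\ref{XXZ2} requires to conclude $L\notin\ce_T$. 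Your parenthetical hope that the convolution can be ``realized as the spectral measure of a genuine nonconstant $L^2_0$ function built from $f$'' is precisely the step that fails, and you had already observed that the naive realization $|f|^2$ degenerates. A related symptom: your argument never actually invokes the hypothesis $L\in\ce_T$, yet that hypothesis is doing essential work. In the paper's $\R^d$ proof (a special case of Proposition~\ref{p:subgroup1}) direction-$L$ ergodicity makes each directional eigenspace one-dimensional; since that eigenspace is invariant under the full action $U_T$, one gets a character $\alpha$ of $\R^d$ with $f\circ T^g=\alpha(g)f$, i.e.\ an eigenfunction of $T$ itself, which global weak mixing then kills.

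The $\Z^d$ case is not a routine transport of the $\R^d$ argument through $\widetilde T$ and Theorem~\ref{unit1}, contrary to your last paragraph. The paper's Example~\ref{baddir} shows that $\widetilde T^{|L}$ can fail to be ergodic purely because of the rotation factor (when $L^\perp\cap\Z^d\ne\{\vec 0\}$ even though $L$ is not completely rational), so directional eigenvalues of $\widetilde T^{|L}$ need not be simple and the one-dimensionality mechanism breaks. The paper instead argues directly with an eigenfunction $F\in L^2_{\mathcal O}$: direction-$L$ ergodicity of $T$ forces $|F\circ\widetilde T^{\vec v}|$ and the ratios $F\circ\widetilde T^{\vec n}/F$ to lie in $\mathcal O$, i.e.\ to depend only on the suspension coordinate $\vec s$; fixing a suitable $\vec s$ then produces a nonconstant eigenfunction of the $\Z^d$-action $T$, contradicting weak mixing. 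To repair your proposal you would need to replace the convolution step with an argument of this type that genuinely exploits $L\in\ce_T$.
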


For $\R^d$-actions $T$ this is a special case of Proposition~\ref{p:subgroup1}. 
Since it is clear that weak mixing implies ergodicity, we need only prove that $\ce_T\supseteq\cw_T$.
The next lemma is a key ingredient in the proof. The proposition following the lemma establishes 
the $\R^d$ case, and a little more.  
Similarly the $\mathbb Z^d$ case where $L$ is a 
completely rational direction follows from Proposition~\ref{p:subgroup1}. 

If $d=2$ and $L$ is not completely rational, then $L$ is irrational, as is $L^\perp$ (see Definition~\ref{d:rational}), and we 
can provide a quick proof of Theorem~\ref{p:ergiswm} in this case as well.   If $T$ is a weak mixing $\Z^2$-action and $T$ 
is  ergodic but not weak mixing in the direction $L$, then
by Definition~\ref{Zddef1}  $L^\perp$ is not a wall for $\varsigma_{\mathcal O}^{\widetilde T}$.  
Since $L^\perp$ is irrational, it misses all the atoms of $\varsigma_0^{\widetilde T}$ so
it is also not a wall for $\varsigma_{0}^{\widetilde T}$.  This implies that  
$\widetilde T^{|L}$ is ergodic.  By Proposition~\ref{p:subgroup1}
we have that $\widetilde T^{|L}$ is weak mixing. We conclude by Definition~\ref{Zddef1} that $T$ is weak mixing in the direction $L$.

For $d>2$, however, it is possible for $L^\perp\cap\Z^d\neq\emptyset$ even if $L$ is not rational. This means that 
$\widetilde T^{|L}$ can fail to be ergodic, but with the non-ergodicity entirely due to 
the rotation factor.

\begin{example}\label{baddir}
Let $L=\{t(0,1,\sqrt 2):t\in\R\}\subseteq\R^3$, which is not completely rational 
since $L\cap \Z^3=\{\vec 0\}$. However,  $L^\perp\cap\Z^3\sim\Z\ne\Z^{{\rm dim}(L^\perp)}=\Z^2$, so $L$ is also not irrational. 
Write $\mathbb T^3=[0,1)^3$ and note that the sets $X\times[0,\epsilon)\times[0,1)^2$ are invariant under $\widetilde T^{|L}$.  Thus we can not assume that eigenvalues of $\widetilde T^{|L}$ are simple. 
 \end{example}

 Because of Example~\ref{baddir}, the proof of Proposition~\ref{p:subgroup1} cannot be used to prove Theorem~\ref{p:ergiswm} for $\Z^d$-actions in general.  Here we provide a different proof.

\begin{proof}[Proof of Theorem~{\rm \ref{p:ergiswm}} for $\Z^d$-actions.]
 
Let $T$ be a weak mixing $\Z^d$-action, and suppose for some $0<e<d$ that $T$ is ergodic but not 
weak mixing in the direction $L\in\mathbb G_{e,d}$.   
By Definition~\ref{Zddef1}  there is  a nonzero eigenfunction  
$F\in L^2_\co(\widetilde X,\widetilde\mu)$ for $\widetilde T^{|L}$
with nonzero eigenvalue $\vec\ell\in L$. 
By an argument from the proof of Proposition~\ref{p:subgroup1}, the eigenspace
$
{\mathcal F}_{\vec \ell}:=\{F:F(\widetilde T^{|L})^{\vec t } (x)=e^{2\pi i {\vec \ell\,}\cdot \vec t}F(x)\}
$
is $U_{\widetilde T}$-invariant. Thus for any $\vec v\in\R^d$, we have that 
 $G_{\vec v}:=F\circ \widetilde T^{\vec v}  \in\mathcal F_{\vec\ell}$, and 
 therefore,  $\vert G_{\vec v}\vert$
 is $\widetilde T^{|L}$-invariant. 
 Since $T$ is direction $L$ ergodic,   
Definition~\ref{Zddef1} implies
$\vert G_{\vec v}\vert\in \mathcal O$, 
and thus for each $\vec v\in\R^d$
\eqref{e:defineo1} implies  
$\vert G_{\vec v}(x,\vec s\,)\vert = \omega_{\vec v}(\vec s\,)\in L^2([0,1)^d,\lambda)$.
If $Z=\{\vec s\in[0,1)^d: \omega_{\vec 0}(\vec s\,)=0\}$, then $\lambda(Z^c)>0$
since $F=G_{\vec 0}$ is non-zero.  
Furthermore if $\vec n\in\Z^d$, then  $\widetilde T^{\vec n}(x,\vec s)=(T^{\vec n}x,\vec s\,)$, so
$G_{\vec n}(x,\vec s\,)=F(\widetilde T^{\vec n}(x,\vec s\,))=F(T^{\vec n}x,\vec s\,)$.   
Thus for all $\vec n\in\Z^d$ the function $\omega_{\vec n}(\vec s\,)=0$ if and only if $\vec s\in Z$.

We define $H_{\vec n}(x,\vec s\,)={F(x,\vec s\,)}/{G_{\vec n}(x,\vec s\,)}$ if $\vec s\notin Z$ and 
$H_{\vec n}(x,\vec s\,)=0$ otherwise.
Since $F,G_{\vec n}\in\cf_{\vec \ell}$, we have that $H_{\vec n}$ is  $\widetilde T^{|L}$-invariant, so 
$H_{\vec n}\in \mathcal O$. Thus 
$H_{\vec n}(x,\vec s\,)=c_{\vec n}(\vec s\,)$ for some function $c_{\vec n}\in L^2([0,1)^d,\lambda)$, and
$G_{\vec n}(x,\vec s\,)=c_{\vec n}(\vec s\,)F(x,\vec s\,)$.
We claim that there is a $Z'\subseteq Z^c$, $\lambda(Z')>0$,  
so that for any $\vec s\in Z'$, 
$f_{\vec s\,}(x):=F(x,\vec s\,)$ is not constant.  
Indeed, suppose 
$f_{\vec s\,}(x)=a_{\vec s}$ for a.e $\vec s\in Z^c$.  
Then  
$F(x,\vec s\,)= 0$ for $\vec s\in Z$  and $F(x,\vec s\,)=a_{\vec s}$ for $\vec s\notin Z$.
Thus $F\in \co$, 
contradicting 
$F\in L^2_{\mathcal O}(\widetilde X,\mu)=\co^\perp$.  

Fix $\vec s\in Z'$ and define $f(x):=F(x,\vec s\,)$. 
Then 
$
f(T^{\vec n}x)=F(T^{\vec n},\vec s\,)=G_{\vec n}(x,\vec s\,)=c_{\vec n}(\vec s\,)F(x,\vec s\,)=c_{\vec n}(\vec s\,)f(x),
$
and since $T$ is measure preserving, we have $\vert c_{\vec n}(\vec s\,)\vert=1$, or in other words 
$c_{\vec n}(\vec s\,)=e^{2\pi i\alpha_{\vec n}}$ for some $\alpha_{\vec n}\in\R$.   
Arguing as in the proof of Proposition~\ref{p:subgroup1}, we have 
$e^{2\pi i\alpha_{\vec n+\vec m}}=e^{2\pi i\alpha_{\vec n}}e^{2\pi i\alpha_{\vec m}}$,
so there exists $\vec\alpha\in[0,1)^d$ so that $\alpha_{\vec n}=\vec \alpha\cdot\vec n$.
Thus $f$ is a non-zero eigenfunction for the $\Z^d$-action $T$ with eigenvalue $\vec\alpha$. This is a contradiction
since $T$ is weak mixing.
\end{proof} 

\subsection{Strong mixing}

A $G=\R^d$ or $\Z^d$-action $T$ (or a subgroup of one of these) is 
{\em strong mixing} if for each $f\in L^2_0(X,\mu)$ its correlation function 
satisfies $\widehat\sigma_f(\vec g\,)=(U_T^{\vec g}f,f)\to 0$ as 
$\vec g\to\infty$. As above, the correlation function
$\widehat\sigma_f(\vec g\,)$ is the Fourier transform of the 
spectral measure $\widehat\sigma_f$ on $\widehat G$.
A measure like $\widehat\sigma_f$, which has a Fourier transform that vanishes 
at infinity is called a {\em 
Rajchman measure}. In particular, a $G$-action $T$ is strong mixing if and only if its reduced spectral measure 
$\sigma^T_0$ is Rajchman (see \cite{KT}).  
No measure with an atom can be a Rajchman measure, so strong mixing implies weak mixing, which implies ergodicity.
By the Riemann-Lebesgue lemma, if $T$ has Lebesgue spectrum,
which means $\sigma^T_0$ is absolutely continuous, then $\sigma^T_0$ is a  
Rajchman measure,
so $T$ is strong mixing. In particular, Bernoulli actions are strong mixing, as are certain 
entropy zero actions.

It is easy to see that if $T$ is a strong mixing $\R^d$-action, then the restriction to $T^{|L}$ to any 
direction $L\in \G_d$ is also strong mixing. Indeed, strong 
mixing passes to subgroups, unlike ergodicity and weak mixing and ergodicity, which pass to supergroups. Thus,
strong mixing $\R^d$
actions are ergodic and weak mixing 
in every direction. The same argument holds for completely 
rational directions $L$ if $T$ is a $\Z^d$-action. Our goal now is to prove a the same result for $\Z^d$-
actions in arbitrary directions. 

We will need the following definition and lemma.

\begin{dfn}\label{flat}
Call a subset $I\subseteq \widehat G$, for $\widehat G=\R^d$ or $\T^d$, 
a {\em flat disc} if it is either a single point, or it is the nonempty 
intersection of an $e$-dimensional hyperplane in $G$, $0<e<d$, with an open disc 
$D\subseteq G$.
\end{dfn}

\begin{lem}\label{norajchman}
A wall measure $\sigma$ on $\widehat G=\R^d$ or $\T^d$ is not a Rajchman measure. 
\end{lem}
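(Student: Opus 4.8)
The plan is to show directly that the Fourier transform $\widehat\sigma$ fails to vanish at infinity, by exhibiting $\vec g_k\to\infty$ in $G$ with $|\widehat\sigma(\vec g_k)|$ bounded below. I take the carrier of $\sigma$ to be a proper wall, i.e. of dimension $e<d$ (the only case needed below; for $e=d$ there is no perpendicular direction to exploit, and indeed a full-dimensional measure can be Rajchman). The $\R^d$ case is immediate: writing the carrier as $P=L+\vec\ell$ with $L\in\G_{e,d}$ and $\vec\ell\in L^\perp$, for every $\vec u\in L$ and $\vec n\in L^\perp$ we have $(\vec u+\vec\ell)\cdot\vec n=\vec\ell\cdot\vec n$, so the character $\gamma\mapsto e^{-2\pi i\,\gamma\cdot\vec n}$ is constant on $\operatorname{supp}\sigma\subseteq P$. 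Hence $\widehat\sigma(\vec n)=e^{-2\pi i\,\vec\ell\cdot\vec n}\,\sigma(P)$ for all $\vec n\in L^\perp$; since $e<d$ makes $L^\perp$ a nonzero subspace, letting $\vec n\to\infty$ inside $L^\perp$ keeps $|\widehat\sigma(\vec n)|=\sigma(P)>0$.

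For $\widehat G=\T^d$ the carrier is $P=\pi(V)+\vec b$ with $V\in\G_{e,d}$, and the obstruction is that $\pi(V)$ may be dense, so there need not be any nonzero integer vector perpendicular to $V$. My plan is to pull $\sigma$ back through the affine map $\Psi\colon V\to\T^d$, $\Psi(\vec v)=\pi(\vec v+\vec b')$ with $\pi(\vec b')=\vec b$, whose kernel is $V\cap\Z^d$. Setting $r=\dim\operatorname{span}(V\cap\Z^d)$, the induced map of $H:=V/(V\cap\Z^d)\cong\T^r\times\R^{e-r}$ into $\T^d$ is a continuous injection onto $P$, so (Lusin--Souslin) it transports $\sigma$ to a finite Borel measure $\mu$ on $H$ with $\|\mu\|=\sigma(P)$. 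A direct computation then gives $\widehat\sigma(\vec n)=e^{-2\pi i\,\vec n\cdot\vec b'}\,\widehat\mu(\chi_{\vec n})$, where $\chi_{\vec n}\in\widehat H$ is induced by $\vec v\mapsto e^{2\pi i\,\vec n\cdot\vec v}$ and $\widehat\mu$ is the Fourier transform on $H$. Since $\widehat\mu$ is continuous with $\widehat\mu(\mathbf 1)=\|\mu\|>0$, it suffices to produce $\vec n_k\to\infty$ in $\Z^d$ with $\chi_{\vec n_k}\to\mathbf 1$ in $\widehat H\cong\Z^r\times\R^{e-r}$.

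This last step is the heart of the matter and the main obstacle. Convergence $\chi_{\vec n}\to\mathbf 1$ forces the $\Z^r$-coordinate to vanish (it is integer-valued), i.e. $\vec n\perp V_\Q:=\operatorname{span}(V\cap\Z^d)$, and forces the $\R^{e-r}$-coordinate $(\vec n\cdot\vec w_j)_{r<j\le e}$ to tend to $\vec 0$, where $\vec w_{r+1},\dots,\vec w_e$ complete an integer basis of $V\cap\Z^d$ to a basis of $V$. I would therefore work inside the lattice $\Lambda^*=V_\Q^\perp\cap\Z^d$ of rank $d-r$ and study the homomorphism $\phi\colon\Lambda^*\to\R^{e-r}$, $\phi(\vec n)=(\vec n\cdot\vec w_j)_{r<j\le e}$, whose kernel is $V^\perp\cap\Z^d$. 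The decisive count is $d-r>e-r$ (equivalently $e<d$): a subgroup of $\R^{e-r}$ of rank exceeding $e-r$ cannot be discrete, so either $\phi(\Lambda^*)$ is non-discrete and distinct images accumulate at $\vec 0$, or $\ker\phi$ is infinite and $\phi$ vanishes on infinitely many points. In either case one obtains distinct $\vec n_k\in\Lambda^*$ with $\phi(\vec n_k)\to\vec 0$; distinctness forces $|\vec n_k|\to\infty$, while $\vec n_k\in V_\Q^\perp$ kills the $\Z^r$-coordinate, so $\chi_{\vec n_k}\to\mathbf 1$ and $|\widehat\sigma(\vec n_k)|\to\sigma(P)>0$. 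The points requiring care are this rank inequality and the non-discreteness claim; the measurable pullback $\Psi_*\mu=\sigma$ is routine.
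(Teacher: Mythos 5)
Your proof is correct. For $\widehat G=\R^d$ it coincides with the paper's argument: both observe that every character indexed by a vector in $L^\perp$ is constant on the carrier $P=L+\vec\ell$, so $|\widehat\sigma|$ equals $\sigma(P)>0$ on all of the nonzero subspace $L^\perp$. For $\widehat G=\T^d$ you take a genuinely different, and in fact more complete, route. The paper restricts $\sigma$ to a flat disc $I$ lying in a fundamental domain, lifts to $\R^d$, and shows the $\R^d$-Fourier transform is constant on cosets of $L^\perp$; but the Rajchman property on $\T^d$ concerns only integer frequencies, so one must still produce unboundedly many $\vec n\in\Z^d$ on which that transform stays bounded below (equivalently, integer points arbitrarily close to $L^\perp$), a pigeonhole step the paper leaves implicit. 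Your argument supplies exactly this arithmetic input, and does so globally rather than locally: you transport $\sigma$ to $H=V/(V\cap\Z^d)\cong\T^r\times\R^{e-r}$ and reduce the problem to finding $\vec n_k\to\infty$ in $\Z^d$ with $\chi_{\vec n_k}\to\mathbf 1$ in $\widehat H$, which your rank count $d-r>e-r$ delivers via the dichotomy that either $\phi(\Lambda^*)$ is non-discrete or $\ker\phi=V^\perp\cap\Z^d$ is infinite; both halves of the dichotomy are justified correctly (a discrete subgroup of $\R^{e-r}$ has rank at most $e-r$). The caveats are minor and you flag them yourself: the lemma must be read as concerning walls of dimension $e<d$, which is how it is applied in Corollary~\ref{sw} (a $d$-dimensional wall measure such as Lebesgue measure is indeed Rajchman), and the Lusin--Souslin transport together with dominated convergence for $\widehat\mu(\chi_{\vec n_k})\to\|\mu\|$ are routine. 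What the paper's localization buys is brevity and a uniform treatment of $\R^d$ and $\T^d$; what yours buys is a self-contained proof of the torus case that makes the Diophantine step explicit.
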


\begin{proof}
 If $\sigma$ is a wall measure, then $\sigma(I)>0$ for some flat disc $I$,
and it will suffice to replace $\sigma$ with the wall measure $\sigma'(E)=\sigma(E\cap I)$.
Clearly, if $\sigma'$ is atomic, then it is not Rajchman, so we assume $\sigma'$ is non-atomic. 

Without loss of generality,
we can translate $I$ so it goes through $\vec 0$. 
In the case $\widehat G=\T^d$ we can identify $\T^d=[-1/2,1/2)^d\subseteq\R^d$ and assume 
$I\subseteq [-1/2,1/2)^d$. Thus in either case, we view $\sigma'$ as a measure on $\R^d$. 
We can write any $\vec t\in \R^d$ uniquely as 
$\vec t={\vec t}_L+{\vec t}_{L^\perp}$ where  ${\vec t}_L\in L=\text{span}(I)$ and ${\vec t}_{L^\perp}\in L^\perp$.
Then since $\sigma$ is supported in $I$, 
\begin{equation*}
\widehat\sigma(\vec t\,)=\int_{\widehat G} e^{-2\pi i (\vec a\cdot \vec t\,)}d\sigma(\vec a)
=\int_I e^{-2\pi i (\vec a\cdot({\vec t}_L+{\vec t}_{L^\perp}))}d\sigma(\vec a)
=\int_I e^{-2\pi i (\vec a\cdot {\vec t}_L)}d\sigma(\vec a)=\widehat{\sigma|_L}({\vec t}_L).
\end{equation*}

Here $\sigma|_L$ is the restriction of $\sigma$ to $L$, and 
its Fourier transform 
$\widehat{\sigma|_L}({\vec t}_L)$
is a positive definite function on $L$.
Thus $\widehat\sigma(\vec t\,)$ is constant on each coset $L^\perp+{\vec t}_L$
and in particular, is not 0 at infinity.  
\end{proof}

\begin{cor}\label{sw}
If $T$ is a strong mixing $G=\R^d$ or $\Z^d$-action then $T$ is weak mixing  in every direction, and thus ergodic in every direction. 
\end{cor}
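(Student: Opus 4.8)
The plan is to reduce everything to the spectral characterizations already established and to the fact, recorded in Lemma~\ref{norajchman}, that a wall measure is never Rajchman. For $\R^d$-actions there is nothing to do beyond what precedes the statement: strong mixing passes to subgroups, so $T^{|L}$ is strong mixing, hence weak mixing, for every $L\in\G_d$. Since a direction $L$ weak mixing action is a fortiori direction $L$ ergodic (an invariant function is an eigenfunction with eigenvalue $\vec 0$, cf.\ Definition~\ref{Zddef1}), it suffices to prove the weak mixing assertion for $\Z^d$-actions. Recall that $T$ is strong mixing exactly when $\sigma_0^T$ is Rajchman, i.e.\ $\widehat{\sigma_0^T}$ vanishes at infinity, and that by Theorem~\ref{XXZ2} a direction $L$ fails to be weak mixing precisely when some affine set $P=\pi(L^\perp)+\pi(\vec\ell)$, $\vec\ell\in L$, is a wall for $\sigma_0^T$.

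First I would argue by contradiction: assume $T$ is strong mixing but $L\notin\cw_T$, and let $P$ be the resulting wall, so $\sigma_0^T(P)>0$. The key move is to manufacture from $P$ an honest wall measure dominated by $\sigma_0^T$ and then promote it to the spectral measure of a function. Concretely, I would restrict $\sigma_0^T$ to a suitable flat piece of $P$ to obtain a nonzero wall measure $\sigma'\ll\sigma_0^T$ (see below). Since $\sigma_0^T=\sigma_{f_0}$ is a measure of maximal spectral type for $U_T|_{L^2_0(X,\mu)}$, Proposition~\ref{sst} furnishes some $f\in L^2_0(X,\mu)$ with $\sigma_f=\sigma'$. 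Now the two characterizations collide: by \eqref{newlabel} the correlation function of $f$ is $\widehat{\sigma_f}(\vn)=(U_T^{\vn}f,f)$, which must tend to $0$ as $\vn\to\infty$ because $T$ is strong mixing; but $\sigma_f=\sigma'$ is a wall measure, so by Lemma~\ref{norajchman} its Fourier transform does not vanish at infinity. This contradiction shows no such $L$ exists, which is the desired weak mixing conclusion; the ergodicity-in-every-direction statement then follows as noted above (or, alternatively, from Theorem~\ref{p:ergiswm} together with the fact that strong mixing implies weak mixing).

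The main obstacle is purely measure-theoretic and lives in the passage ``restrict $\sigma_0^T$ to a flat piece of $P$.'' Unlike the $\R^d$ situation, the wall $P=\pi(L^\perp+\vec\ell)\subseteq\T^d$ need not be closed: when $L^\perp$ is irrational its image under $\pi$ is dense, so $\sigma_0^T|_P$ is not a priori supported on a single affine translate and need not qualify literally as the wall measure of Definition~\ref{wallZ}. To handle this I would lift to the fundamental domain $[-1/2,1/2)^d$, where $P$ pulls back to the countable union $\bigcup_{\vn\in\Z^d}(L^\perp+\vec\ell+\vn)\cap[-1/2,1/2)^d$ of parallel affine pieces. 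Since $\sigma_0^T$ assigns this union positive total mass, countable additivity produces a single such piece---a flat disc $I$ in the sense of Definition~\ref{flat}---with $\sigma_0^T(I)>0$, and $\sigma':=\sigma_0^T(\,\cdot\cap I)$ is then a genuine wall measure dominated by $\sigma_0^T$, to which Lemma~\ref{norajchman} applies verbatim. In fact the proof of Lemma~\ref{norajchman} already performs exactly this localization to a flat disc, so one could instead apply the lemma to $\sigma_0^T|_P$ directly and let that proof do the work. Once this reduction is in place the argument above goes through unchanged.
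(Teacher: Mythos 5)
Your proof is correct and follows essentially the same route as the paper's: both argue that a non--weak-mixing direction produces a wall measure $\sigma'\ll\sigma_0^T$, which contradicts strong mixing because of Lemma~\ref{norajchman}. The only cosmetic difference is that the paper cites the preservation of the Rajchman property under absolute continuity, whereas you realize $\sigma'$ as the spectral measure $\sigma_f$ of an actual $f\in L^2_0(X,\mu)$ via Proposition~\ref{sst} and invoke the definition of strong mixing directly; your extra care in localizing the possibly non-closed wall $\pi(L^\perp+\vec\ell\,)$ to a flat disc is a detail the paper delegates to the proof of Lemma~\ref{norajchman}.
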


\begin{proof}
Having already discussed $G=\R^d$,
we just need to prove the case $G=\Z^d$. 
Since $T$ is strong mixing, $\sigma_0^T$ is a Rajchman measure 
on $\widehat G=\T^d$. If $T$ is not weak mixing in the direction $L$, then
$L$ is a wall and the corresponding wall measure $\sigma'$ satisfies 
$\sigma'\ll  \sigma_0^T$. But this is impossible since the Rajchman property 
is preserved by absolute continuity (Proposition 2.5 in \cite{KT}). 
\end{proof}

For an $\R^d$-action $T$ and a direction $L\in \G_{e,d}$, we say $T$ is 
{\em strong mixing in the direction $L$} if $T^{|L}$ is mixing. 
Following our 
usual practice, we say a $\Z^d$-action $T$ is
{\em strong mixing in a direction 
$L\in \G_{e,d}$}
for any 
$f\in L^2_{\mathcal O}( \widetilde X,\widetilde \mu)$, the unit suspension 
correlation function satisfies
${\widehat \varsigma\,}_f^{\widetilde T}(\vec t\,):=(U_{\widetilde T}^{\vec t}f,f)\to 0$ as $||\vec t\,||\to\infty$ in $L$.

\begin{prop}
If $T$ is a strong mixing $G=\R^d$ or $\Z^d$-action, then $T$ is strong mixing in every direction.
\end{prop}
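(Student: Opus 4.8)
The plan is to handle the two groups separately and, in the $\Z^d$ case, to reduce ``strong mixing in the direction $L$'' to the \emph{global} Rajchman property of a single measure on $\R^d$. For $\R^d$-actions the statement is just the remark that strong mixing passes to subgroups: if $T$ is strong mixing then $(U_T^{\vec g}f,f)\to 0$ as $\|\vec g\|\to\infty$ in $\R^d$ for every $f\in L^2_0(X,\mu)$, and restricting $\vec g$ to a direction $L$ only restricts the manner in which $\vec g\to\infty$, so $T^{|L}$ is mixing, which is strong mixing in the direction $L$ by definition.

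For a $\Z^d$-action I must instead show that for every $f\in L^2_\co(\widetilde X,\widetilde\mu)$ the suspension correlation function $\widehat{\varsigma}_f^{\widetilde T}(\vec t)=(U_{\widetilde T}^{\vec t}f,f)$ tends to $0$ as $\|\vec t\|\to\infty$ within $L$. Since $\varsigma_f^{\widetilde T}\ll\varsigma_\co^{\widetilde T}$ (maximal spectral type on $L^2_\co$) and $\varsigma_\co^{\widetilde T}\ll\widetilde{\sigma_0^T}$ by Corollary~\ref{c:corDL}, and the Rajchman property is inherited under absolute continuity (Proposition~2.5 in \cite{KT}, as already used in Corollary~\ref{sw}), it suffices to prove that $\widetilde{\sigma_0^T}$ is a Rajchman measure on all of $\R^d$: then $\varsigma_f^{\widetilde T}$ is Rajchman on $\R^d$ and its transform vanishes at infinity in every subspace $L$. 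Viewing $\sigma_0^T$ as a measure supported on $[0,1)^d\subseteq\R^d$ in $\widetilde{\sigma_0^T}=\sum_{\vec n}a_{\vec n}(\sigma_0^T\circ\tau^{\vec n})$, each translate contributes a unimodular factor, so $\widehat{\widetilde{\sigma_0^T}}(\vec t)=\widehat{\sigma_0^T}(\vec t)\sum_{\vec n}a_{\vec n}e^{-2\pi i\vec n\cdot\vec t}$, with the sum bounded by $\sum_{\vec n}a_{\vec n}=1$. Hence $|\widehat{\widetilde{\sigma_0^T}}(\vec t)|\le|\widehat{\sigma_0^T}(\vec t)|$, and it remains only to show that the continuous Fourier transform of $\sigma_0^T$ on $\R^d$ vanishes at infinity.

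This last step is the main obstacle: strong mixing gives only that $\sigma_0^T$ is Rajchman on $\T^d$, i.e. that the coefficients $\widehat{\sigma_0^T}(\vec m)$, $\vec m\in\Z^d$, tend to $0$, whereas I need vanishing along all real $\vec t\to\infty$. I would bridge this lattice-to-reals gap by compactness. Writing $\vec t=\vec m+\vec s$ with $\vec m\in\Z^d$ and $\vec s\in[0,1)^d$, one has $\widehat{\sigma_0^T}(\vec m+\vec s)=\widehat{(\sigma_0^T)_{\vec s}}(\vec m)$, where $(\sigma_0^T)_{\vec s}$ is the complex measure with bounded density $e^{-2\pi i\vec b\cdot\vec s}$ relative to $\sigma_0^T$; being absolutely continuous with respect to the Rajchman measure $\sigma_0^T$, it is itself Rajchman, so $\widehat{(\sigma_0^T)_{\vec s}}(\vec m)\to0$ for each fixed $\vec s$. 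To make this uniform, note that $\vec s\mapsto(\sigma_0^T)_{\vec s}$ is continuous in total variation and $[0,1)^d$ is precompact, so any sequence $\vec t_k=\vec m_k+\vec s_k\to\infty$ has a subsequence along which $\vec s_k\to\vec s_*$; then $|\widehat{\sigma_0^T}(\vec t_k)-\widehat{(\sigma_0^T)_{\vec s_*}}(\vec m_k)|\le\|(\sigma_0^T)_{\vec s_k}-(\sigma_0^T)_{\vec s_*}\|_{TV}\to0$ while $\widehat{(\sigma_0^T)_{\vec s_*}}(\vec m_k)\to0$, forcing $\widehat{\sigma_0^T}(\vec t_k)\to0$. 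This rules out a nonzero $\limsup$ and establishes vanishing at infinity on $\R^d$, completing the reduction. The conversion of vanishing on the integer lattice into vanishing over the reals, via total-variation continuity of the twisted measures $(\sigma_0^T)_{\vec s}$, is where the real work lies.
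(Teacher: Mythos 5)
Your proposal is correct and follows essentially the same route as the paper: reduce to showing $\widetilde{\sigma_0^T}$ is Rajchman on $\R^d$ via the product formula for its Fourier transform, then bridge the lattice-to-reals gap by twisting $\sigma_0^T$ with the character $e^{-2\pi i\vec b\cdot\vec s}$ (your $(\sigma_0^T)_{\vec s}$ is exactly the paper's $\sigma'$), using that absolute continuity preserves the Rajchman property together with compactness of the fractional parts. The only cosmetic differences are that you invoke Corollary~\ref{c:corDL} where the paper uses Theorem~\ref{unit1} (only the one direction of absolute continuity is needed, so your choice is slightly more economical) and you phrase the final step as a subsequence argument rather than a contradiction.
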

\begin{proof}
We noted above that if $T$ is strong mixing $\R^d$-action then each $T^{|L}$ is strong mixing,
thus  
$T$ is strong mixing in every direction. 
Now assume 
$T$ is a strong mixing $\Z^d$-action. Then 
$\sigma_0^T$ is a 
Rajchman measure on $\widehat G=\T^d$.
To show that $T$ is mixing in any direction $L\in\mathbb G_{e,d}$ we show the stronger result that 
for any $f\in L^2_{\mathcal O}(X,\mu)$,
$\varsigma_f^{\widetilde T}$ is a Rajchman measure,
since then 
${\widehat \varsigma\,}_f^{\widetilde T}(\vec t\,)\to 0$ for any $||\vec t||\to\infty$. 
Proposition 2.5 in \cite{KT} shows that the Rajchman property is preserved by 
absolute continuity.  
Since $\varsigma_f^{\widetilde T}\ll\varsigma_\co^{\widetilde T}$ and
by Theorem~\ref{unit1}, $\varsigma_\co^{\widetilde T}\sim\widetilde{\sigma_0^T}$ it suffices 
to show $\widetilde{\sigma_0^T}$ 
is a Rajchman measure on $\R^d$.
Taking the Fourier transform of (\ref{average1})  over $\R^d$, we have
\begin{equation*}
(\widetilde{\sigma_0^T})\widehat{\phantom{l}}(\vec t\,)
=\sum_{\vec n\in \Z^d} a_{\vec n}(\sigma_0^T\circ\tau^{-\vn})\widehat{\phantom{o}}(\vt\,)
=\sum_{\vec n\in \Z^d} a_{\vec n}\left(e^{2\pi i (\vec n\cdot\vec t\,)}\,\widehat{\sigma_0^T}(\vec t\,)\right)
=\left(\sum_{\vec n\in \Z^d} a_{\vec n}\,e^{2\pi i (\vn\cdot\vt\,)}\right)
\widehat{\sigma_0^T}(\vec t\,),
\end{equation*}
where 
\begin{equation*}
\widehat{\sigma_0^T}(\vec t\,)=\int_{\R^d}e^{-2\pi i({\vec \ell}\cdot\vt)}d\sigma_0^T({\vec \ell}\,)
=\int_{[0,1)^d}e^{-2\pi i({\vec \ell}\cdot\vt)}d\sigma_0^T({\vec \ell}\,).
\end{equation*}
is the Fourier transform of $\sigma_0^T$ over $\R^d$, so we 
need to show that $\sigma_0^T$ is a Rajchman measure on $\R^d$.    
By the mixing assumption we know that 
$\sigma_0^T$ is a Rajchman measure on $\T^d=[0,1)^d$,
which means 
\begin{equation*}
\widehat{\sigma_0^T}(\vec n\,)=\int_{[0,1)^d}e^{-2\pi i({\vec \ell}\cdot\vn)}d\sigma_0^T({\vec \ell}\,)\to 
0\text{ as }||\vn||\to \infty\text{ in }\Z^d.
\end{equation*}
We need to show the same happens for $||\vec t||\to\infty\text{ in }\R^d$.  
 
Take a sequence $\vt\in\R^d$ with  $||\vt_j||\to \infty$ and suppose $|\widehat{\sigma_0^T}(\vec t_j)|\ge K>0$.  Write $\vt_j=\vn_j+\vw_j$ where $\vn_j\in \Z^d$ and $\vw_j\in [0,1)^d$.
By passing to a subsequence we can assume $\vw_j\to\vw\in[0,1]^d$.  Define a measure $\sigma'$ by setting 
$d\sigma'({\vec \ell}\,)=e^{-2\pi i ({\vec \ell}\cdot\vw)}d\sigma_0^T(\vec \ell\,)$.  Note that $\sigma'\ll\sigma_0^T$ and
 \begin{align*}
\widehat{\sigma'}(\vn_j)=\int_{[0,1)^d}e^{-2\pi i ({\vec \ell}\cdot\vn_j)}d\sigma'(\vec \ell\,)
&=\int_{[0,1)^d}e^{-2\pi i ({\vec \ell}\cdot(\vn_j+\vw))}d\sigma_0^T(\vec \ell\,).
\end{align*}
Let  $\epsilon<\frac{K}2$, then 
\begin{align*}
|\widehat{\sigma'}(\vn_j)-\widehat{\sigma_0^T}(\vec t_j)|&=\left|\int_{[0,1)^d}e^{-2\pi i ({\vec \ell}\cdot(\vn_j+\vw))}-e^{-2\pi i(\vec\ell\cdot\vec t_j)}d\sigma_0^T(\vec \ell\,)\right|\\
&=\left|\int_{[0,1)^d}e^{-2\pi i (\vec\ell\cdot\vec t_j)}\left(e^{-2\pi i(\vec\ell\cdot(\vec n_j+\vec w-\vec t_j))}-1\right)d\sigma_0^T(\vec \ell\,)\right|\\
&=\left|\int_{[0,1)^d}e^{-2\pi i (\vec\ell\cdot\vec t_j)}\left(e^{-2\pi i(\vec\ell\cdot(\vec w-\vec w_j))}-1\right)d\sigma_0^T(\vec \ell\,)\right|\\
&\le \max_{\vec\ell\in[0,1)^d}\left|e^{-2\pi i(\ell\cdot(\vec w-\vec w_j))}-1\right|\sigma_0^T([0,1)^d)<\epsilon
\end{align*}
for $j$ sufficiently large.  It follows that $\vert\widehat\sigma'(n_j)\vert\ge \frac{K}2$ for $j$ large enough so it is not Rajchman on $\mathbb Z^d$, but yet $\sigma'\ll\sigma_0^T$, a contradiction.  
\end{proof}

A  similar argument shows the following. 

\begin{prop}
If $T$ is 
strong mixing in a direction $L$ then it is weak mixing and ergodic in the direction $L$, and in particular, $T$ is ergodic.
\end{prop}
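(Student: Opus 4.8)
The plan is to argue by contradiction in both the $\R^d$ and $\Z^d$ cases, exploiting the single fact that a directional eigenfunction has a correlation function of \emph{constant} modulus along $L$, which is flatly incompatible with the decay required by strong mixing in the direction $L$. This is the same mechanism behind Lemma~\ref{norajchman} and Corollary~\ref{sw}, specialized to a single direction.

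First I would dispatch the $\R^d$ case, where strong mixing in the direction $L$ means $T^{|L}$ is mixing. If $T$ were not weak mixing in the direction $L$, there would be a nonconstant eigenfunction $f\in L^2_0(X,\mu)$ for $T^{|L}$ with some eigenvalue $\vec\ell\in L$. Then for $\vec t\in L$ one computes $(U_T^{\vec t}f,f)=e^{2\pi i(\vec\ell\cdot\vec t\,)}\|f\|^2$, which has constant modulus $\|f\|^2>0$ and hence cannot tend to $0$ as $\|\vec t\,\|\to\infty$ in $L$. This contradicts mixing of $T^{|L}$, so $T$ must be weak mixing in the direction $L$.

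For the $\Z^d$ case I would proceed analogously in the suspension. Suppose $T$ is strong mixing in the direction $L$ but, toward a contradiction, not weak mixing in the direction $L$. By Definition~\ref{Zddef1}, $\widetilde T^{|L}$ then has a nonzero direction $L$ eigenfunction $F\in L^2_{\mathcal O}(\widetilde X,\widetilde\mu)$ with eigenvalue $\vec\ell\in L$, so that $U_{\widetilde T}^{\vec t}F=e^{2\pi i(\vec\ell\cdot\vec t\,)}F$ for all $\vec t\in L$. Its unit suspension correlation function thus satisfies ${\widehat\varsigma}_F^{\widetilde T}(\vec t\,)=(U_{\widetilde T}^{\vec t}F,F)=e^{2\pi i(\vec\ell\cdot\vec t\,)}\|F\|^2$ for $\vec t\in L$, which again has constant modulus $\|F\|^2>0$. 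Since $F\in L^2_{\mathcal O}(\widetilde X,\widetilde\mu)$, this directly contradicts the definition of strong mixing in the direction $L$, which demands ${\widehat\varsigma}_f^{\widetilde T}(\vec t\,)\to 0$ in $L$ for \emph{every} $f\in L^2_{\mathcal O}(\widetilde X,\widetilde\mu)$. Hence $T$ is weak mixing in the direction $L$ after all.

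Finally, in either case weak mixing in the direction $L$ trivially yields ergodicity in the direction $L$ (an invariant function is an eigenfunction with eigenvalue $\vec 0$), and by Proposition~\ref{super1} (respectively Proposition~\ref{super2}) direction $L$ ergodicity forces $T$ itself to be ergodic. The only point needing attention in the $\Z^d$ case is that the eigenfunction furnished by Definition~\ref{Zddef1} genuinely lies in $L^2_{\mathcal O}(\widetilde X,\widetilde\mu)$, so that it is an admissible test function in the definition of directional strong mixing; but this is exactly how directional weak mixing is defined, so no extra work is required. I do not expect a real obstacle here: the entire argument reduces to the observation that eigenfunction correlations have constant modulus.
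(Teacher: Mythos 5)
Your proof is correct and follows essentially the same route the paper intends: the paper disposes of this proposition with ``a similar argument,'' meaning the non-decay mechanism of Lemma~\ref{norajchman} (a wall measure's Fourier transform has constant modulus along the direction perpendicular to its carrier), and your computation $(U^{\vec t}F,F)=e^{2\pi i(\vec\ell\cdot\vec t\,)}\|F\|^2$ for a direction $L$ eigenfunction is exactly that mechanism stated on the function side rather than the spectral-measure side. Your closing observations --- that the eigenfunction supplied by Definition~\ref{Zddef1} lies in $L^2_{\mathcal O}(\widetilde X,\widetilde\mu)$ and is therefore an admissible test function, and that directional ergodicity plus Propositions~\ref{super1} and~\ref{super2} give ergodicity of $T$ --- correctly complete the argument.
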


\subsection{Rigidity and Genericity}\label{s:rigidity}
In this section we investigate the prevalence of directional weak mixing and its relationship to rigidity.

\begin{dfn}\label{d:rigidl}
A $\Z^d$-action $T$ is {\it rigid}
if there exists a sequence $\vec n_k\in\Z^d$, called a {\em rigidity sequence} for $T$, so that $T^{\vec n_k}\to \text{Id}$ in the weak topology, i.e., $||T^{\vec n_k}f-f||_2\to 0$ for all $f\in L^2(X,\mu)$.  
For completely rational directions $L$ (so $\Lambda:=L\cap \Z^d\sim \Z^e$ where $e=\dim(L)$), we say $T$ is {\em rigid in the direction $L$} if we can choose all the $\vec n_k\in\Lambda$.
\end{dfn}

Note that rigidity in any direction $L$ implies $T$ is rigid.
We begin by showing that rigidity in a completely rational direction is compatible with weak mixing in that direction.    

\begin{prop}\label{WMDReg}
There is a rigid
weak mixing $\Z^d$-action $T$ that is rigid in the vertical, horizontal and diagonal directions, and is weak 
mixing in all directions.
\end{prop}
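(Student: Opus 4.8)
The plan is to produce $T$ as a \emph{single} Gaussian $\Z^d$-action built from a carefully chosen base measure, rather than as a product of rigid transformations. To see why the naive attempt must fail, let $R$ be a weak mixing rigid $\Z$-action on $(Y,\nu)$ with rigidity sequence $(p_k)$ and reduced spectral measure $\sigma_0^R$ on $\T$, and consider the product type action $R\otimes R$ of Example~\ref{prodtype}. It is rigid in the horizontal, vertical and diagonal directions along $(p_k,0)$, $(0,p_k)$ and $(p_k,p_k)$, but its reduced spectral measure contains the two pieces equivalent to $\sigma_0^R\times\delta_{\vec 0}$ and $\delta_{\vec 0}\times\sigma_0^R$ (coming from functions of a single coordinate). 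These are supported on the coordinate axes, which are walls perpendicular to the vertical and horizontal directions, so by Theorem~\ref{XXZ2} those directions are not even ergodic. The whole difficulty is to keep the three rigidities while destroying these axis walls, and the Gaussian measure-space construction from a two-dimensional base measure is exactly what reconciles them.

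First I would invoke the classical fact that there is a continuous (hence non-atomic), symmetric, full-support probability measure $\mu$ on $\T$ possessing a rigidity sequence, i.e. integers $p_k\to\infty$ with $\widehat\mu(p_k)\to 1$; any such $\mu$ is necessarily purely singular, since Riemann--Lebesgue forbids an absolutely continuous part. Set $\sigma=\mu^{\times d}$ on $\T^d$, which is non-atomic with full support, and let $T$ be the Gaussian $\Z^d$-action furnished by the GMC with $\sigma_0^T=\exp(\sigma)-\delta_{\vec 0}=\sum_{n\ge 1}\sigma^{(n)}/n!$, exactly as in the proof of Theorem~\ref{t:cecomplete}. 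Since $\sigma$, and therefore each $\sigma^{(n)}$ with $n\ge 1$, is non-atomic, $\sigma_0^T$ is non-atomic, so $T$ is weak mixing by Corollary~\ref{c:noatomatzero}; as in Theorem~\ref{t:cecomplete} the action is free.

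Next I would verify rigidity in the three directions. For a Gaussian action, rigidity along a sequence is equivalent to rigidity of the first Wiener chaos, whose maximal spectral type is the base measure $\sigma$; concretely, $T$ is rigid along $\vec n_k$ as soon as $\widehat\sigma(\vec n_k)\to 1$. Since $\widehat\sigma(n_1,\dots,n_d)=\prod_i\widehat\mu(n_i)$ and $\widehat\mu(0)=1$, the single sequence $(p_k)$ yields $\widehat\sigma(p_k e_1)=\widehat\sigma(p_k e_d)=\widehat\mu(p_k)\to 1$ and $\widehat\sigma(p_k(1,\dots,1))=\widehat\mu(p_k)^d\to 1$. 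As these vectors lie in the horizontal, vertical and diagonal subgroups respectively, $T$ is rigid in each of these completely rational directions, and in particular rigid.

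Finally I would show $\cw_T=\G_d$ via Theorem~\ref{XXZ2}, i.e. that $\sigma_0^T$ has no nontrivial walls. The key observation is that $\sigma=\mu^{\times d}$ charges no proper affine subspace of $\R^d$: any such subspace lies in a graph $\{x_{i_0}=g(x_j:j\ne i_0)\}$, which is $\mu^{\times d}$-null because $\mu$ is non-atomic, so by lifting, every proper wall $P=\pi(L^\perp)+\pi(\vec\ell)$ (a countable union of such subspaces) is $\sigma$-null. This ``charges no proper affine subspace'' property is preserved by convolution, since for such a set $P$ one has $(\sigma*\tau)(P)=\int\sigma(P-y)\,d\tau(y)=0$, the translate $P-y$ being another such set. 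Hence every $\sigma^{(n)}$, and therefore $\sigma_0^T$, charges no proper wall; non-atomicity also rules out $0$-dimensional walls. By Theorem~\ref{XXZ2} this says $T$ is weak mixing in every direction, which together with the rigidity statements completes the proof. The hard part, and the real content, is precisely the clash identified in the first paragraph — securing rigidity in the three completely rational directions while avoiding the axis walls that any product-of-two-actions forces; the only genuine computation is the convolution-stability of the wall-freeness, and the only external input is the classical existence of a continuous rigid measure on $\T$.
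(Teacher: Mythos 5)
Your proof is correct, and while it follows the paper's overall strategy (a Gaussian action via the GMC whose base measure simultaneously carries a rigidity sequence and charges no wall), both key properties are established by genuinely different means. The paper takes a Kronecker set $K\subseteq[0,1]$ of full Hausdorff dimension (Wik, Proposition~\ref{p:Wik}), puts a Frostman measure $\sigma$ on $K\times K$ with $\sigma(B(\vec a,r))\le r^{3/2}$ so that $\sigma$ and all its convolution powers vanish on line segments, and derives the three rigidity sequences from the Kronecker property ($e^{2\pi i(\vec n_k\cdot\vec a)}\to 1$ uniformly on $K\times K$). You instead take $\sigma=\mu^{\times d}$ for a continuous measure $\mu$ on $\T$ with $\widehat\mu(p_k)\to1$: wall-freeness becomes a Fubini computation (a product of non-atomic measures charges no affine hyperplane, hence no lift of a wall, and this is stable under convolution since $(\sigma*\tau)(P)=\int\sigma(P-y)\,d\tau(y)$), and the three rigidities drop out of the multiplicativity $\widehat\sigma(\vec n)=\prod_i\widehat\mu(n_i)$. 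This buys you something real: no Hausdorff dimension of product sets, no Frostman lemma, no full-dimensional Kronecker set (only the much weaker classical existence of a continuous rigid measure on $\T$, e.g.\ the reduced spectral type of any rigid weakly mixing transformation), and the argument works for all $d$ at once, whereas the paper writes out only $d=2$. Two small points to tighten: the assertion that rigidity of the Gaussian action follows from $\widehat\sigma(\vec n_k)\to1$ on the base measure deserves the one-line justification $\widehat{\sigma^{(n)}}(\vec n_k)=\widehat\sigma(\vec n_k)^n\to1$ together with dominated convergence over $\sum_n\sigma^{(n)}/n!$, after which Proposition~\ref{p:maxrigid} applies (the paper's own proof glosses the analogous step); and the hypotheses of symmetry and full support on $\mu$ are never used and can be dropped (non-atomicity is all the Fubini argument needs).
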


For simplicity we give the proof in the case $d=2$. 
The proof relies on the following well known characterization of rigidity (see for example Proposition 2.5, (5) and Corollary 2.6 in \cite{BdJLR}), and some classical results about Kronecker sets.

\begin{prop}\label{p:maxrigid}
A sequence $\{\vec n_k\}\subseteq\Z^2$ is a rigidity sequence for $T$ if and only if $f_k(\vec \ell):=e^{2\pi i(\vec n_k\cdot \vec \ell\,)}\rightarrow 1$ in $L^2(\mathbb T^2,\sigma_0^T)$.
\end{prop}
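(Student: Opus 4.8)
The plan is to reduce both implications to a single spectral identity. First I would record that for any $f \in L^2(X,\mu)$ and $\vec n \in \Z^2$, unitarity of $U_T^{\vec n}$ together with the definition $\varphi_f(\vec n) = (U_T^{\vec n}f, f) = \int_{\T^2}\overline{\vec\ell(\vec n)}\,d\sigma_f(\vec\ell)$ from \eqref{smgen} and $\|f\|_2^2 = \sigma_f(\T^2)$ give
\[
\|U_T^{\vec n}f - f\|_2^2 = 2\|f\|_2^2 - 2\,\mathrm{Re}\,(U_T^{\vec n}f,f) = \int_{\T^2}\bigl|e^{2\pi i(\vec n\cdot\vec\ell)}-1\bigr|^2\,d\sigma_f(\vec\ell).
\]
In the notation of the statement this reads $\|U_T^{\vec n_k}f - f\|_2 = \|f_k - 1\|_{L^2(\sigma_f)}$, which is the engine for everything that follows.

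For the forward direction I would simply specialize this to a function $f_0 \in L^2_0(X,\mu)$ of maximal spectral type, for which $\sigma_{f_0} = \sigma_0^T$: rigidity of $\{\vec n_k\}$ forces $\|U_T^{\vec n_k}f_0 - f_0\|_2 \to 0$, and the identity turns this into $\|f_k - 1\|_{L^2(\sigma_0^T)} \to 0$.

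For the converse, assume $\|f_k - 1\|_{L^2(\sigma_0^T)} \to 0$ and aim to show $\|U_T^{\vec n_k}f - f\|_2 \to 0$ for every $f \in L^2(X,\mu)$. Since constants are $T$-invariant, it is enough to treat $f \in L^2_0(X,\mu)$ (split $f$ into its mean and its mean-zero part, which do not interact with the difference $U_T^{\vec n_k}f - f$). By Proposition~\ref{sst}, $\sigma_f \ll \sigma_0^T$, so writing $\phi = d\sigma_f/d\sigma_0^T \in L^1(\sigma_0^T)$ the identity becomes $\|U_T^{\vec n_k}f - f\|_2^2 = \int_{\T^2}|f_k-1|^2\,\phi\,d\sigma_0^T$. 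Here the factor $g_k := |f_k-1|^2$ satisfies $0 \le g_k \le 4$ and $\int g_k\,d\sigma_0^T \to 0$, and the task is to upgrade this to $\int g_k\phi\,d\sigma_0^T \to 0$.

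The one step with genuine content is this last upgrade, and I expect it to be the main (though still routine) obstacle: $L^1$-convergence of $g_k$ does not by itself integrate against an arbitrary $\phi \in L^1$. I would handle it by truncation: given $\epsilon > 0$, choose $M$ with $\int_{\{\phi>M\}}\phi\,d\sigma_0^T < \epsilon$, bound $\int_{\{\phi\le M\}} g_k\phi \le M\int g_k\,d\sigma_0^T \to 0$ and $\int_{\{\phi>M\}} g_k\phi \le 4\epsilon$, and then let $\epsilon \to 0$. Equivalently one may note that $\{f : \|U_T^{\vec n_k}f-f\|_2\to 0\}$ is a closed subspace and check the criterion on the dense set of $f$ whose spectral density is bounded, but the truncation is shorter. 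Everything outside this step is spectral-theorem bookkeeping.
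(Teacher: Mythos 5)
Your proof is correct; the paper does not prove this proposition but cites it as a well-known characterization (Proposition 2.5(5) and Corollary 2.6 of \cite{BdJLR}), and your argument --- the identity $\|U_T^{\vec n}f-f\|_2^2=\int_{\T^2}|e^{2\pi i(\vec n\cdot\vec\ell)}-1|^2\,d\sigma_f(\vec\ell)$, specialized to a function of maximal spectral type in one direction and combined with $\sigma_f\ll\sigma_0^T$ plus a truncation of the Radon--Nikodym density in the other --- is exactly the standard spectral proof being referenced. No gaps.
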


\begin{dfn}\label{Kroneckerset}
A compact subset $K\subseteq\mathbb R$ is a {\it Kronecker set} if every continuous function 
$\varphi:K\rightarrow\mathbb C$ with $\vert\varphi(a)\vert=1$ can be uniformly approximated on $K$ by characters.   
\end{dfn}

We state, without proof,  the following result due to Wik \cite{Wik}.
\begin{prop}\label{p:Wik}
There is a Kronecker set $K \subseteq [0,1]$ 
with full Hausdorff dimension.  
\end{prop}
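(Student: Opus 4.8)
The plan is to build $K$ as a generalized Cantor set $K=\bigcap_{m\ge 0}K_m$, where each $K_m$ is a disjoint union of closed intervals, and to balance two competing demands: the Kronecker property, which forces the retained positions to be arithmetically \emph{independent} so that a single character can realize an arbitrary prescribed phase pattern, and full Hausdorff dimension, which forces $K_m$ to retain a large proportion of each interval it subdivides. First I would record the working criterion: $K$ is Kronecker if and only if for every continuous $\varphi\colon K\to S^1$ and every $\vep>0$ there is an integer $n$ with $\sup_{x\in K}|e^{2\pi i n x}-\varphi(x)|<\vep$. By uniform continuity of $\varphi$ on the compact set $K$, it suffices to approximate functions that are constant on each interval of some $K_m$; so the whole problem reduces, level by level, to a simultaneous Diophantine approximation statement.

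Concretely, if the intervals of $K_m$ are $I^{(m)}_j=[a^{(m)}_j,a^{(m)}_j+\ell_m]$ and we are given target phases $\theta_j\in[0,1)$, I would seek an integer $n$ with (i) $n\ell_m$ small, so that $e^{2\pi i n x}$ is nearly constant on each $I^{(m)}_j$, and (ii) $n\,a^{(m)}_j\equiv\theta_j\pmod 1$ up to error $\vep$ for every $j$. The construction I would use places the endpoints $a^{(m)}_j$ at rationals whose denominators $q_m$ grow super-exponentially ($q_m/q_{m-1}\to\infty$), and chooses $\ell_m$ commensurate with $1/q_m$. The rapid growth guarantees that an $n$ solving (ii) can simultaneously be taken to satisfy (i); and one arranges at each level that the ``new'' positional digits introduced in passing from $K_{m-1}$ to $K_m$ are free, so that the congruence system (ii) is solvable for an arbitrary assignment $\{\theta_j\}$. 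Approximating $\varphi$ one level at a time and summing the errors geometrically then produces a sequence of characters converging uniformly to $\varphi$ on $K$, which is the Kronecker property.

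For the dimension, I would attach to $K$ the natural Cantor measure $\mu$ that distributes mass uniformly over the surviving intervals: if $N_m$ subintervals are kept out of a subdivision of each parent interval into $M_m$ equal pieces, then each level-$m$ interval carries mass $(N_1\cdots N_m)^{-1}$ and has length $\ell_m=(M_1\cdots M_m)^{-1}$. By the mass distribution principle, $\dim_H K\ge s$ whenever $\mu(B(x,r))\lesssim r^{s}$ uniformly, and a direct estimate reduces this to controlling $\log(N_1\cdots N_m)/\log(1/\ell_m)$. Choosing $N_m$ to be all but a vanishing proportion of $M_m$, while leaving just enough room for the arithmetic freedom needed above, forces this ratio to tend to $1$, hence $\dim_H K=1$.

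The main obstacle is exactly this last balancing act. The Kronecker mechanism wants the retained positions to be sparse and independent enough that one integer frequency can impose arbitrary phases at every scale, which pushes toward discarding most subintervals; full Hausdorff dimension wants to discard almost none. Reconciling them---showing that one can keep a proportion of subintervals tending to $1$ while still preserving, at every level, the positional freedom that makes the simultaneous congruences (ii) solvable---is the delicate combinatorial heart of Wik's construction, and is where the super-exponential choice of the scales $q_m$ is essential, since it decouples the levels so that solving the phases at level $m$ does not disturb the approximation already achieved at coarser levels.
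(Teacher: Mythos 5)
First, a point of comparison: the paper does not prove this proposition at all --- it is stated explicitly ``without proof'' as a citation of Wik \cite{Wik}, with \cite{Kaufman} mentioned for an alternative (functional/Baire-category) argument. So your sketch can only be judged on its own terms, and as written it has a genuine gap at exactly the step you yourself call ``the delicate combinatorial heart'': that step is not merely left unproved, but in the form you have set it up it is quantitatively impossible. You ask that, at level $m$, a single integer $n$ realize an \emph{arbitrary} phase vector $(\theta_j)_{j\le J_m}$ at the $J_m$ retained endpoints to within $\varepsilon$. The achievable phase vectors $\{(na^{(m)}_1,\dots,na^{(m)}_{J_m})\bmod 1 : n\in\mathbb{Z}\}$ form a one-parameter (cyclic) orbit in $\mathbb{T}^{J_m}$, and an $\varepsilon$-dense subset of $\mathbb{T}^{J}$ needs at least $(c/\varepsilon)^{J}$ points. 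If the retained positions occupy a proportion of an equipartition tending to $1$, they contain many additive quadruples $a_{j_1}+a_{j_2}=a_{j_3}+a_{j_4}$ (any set avoiding these is Sidon, hence of size $O(\sqrt{M_m})$), and each such relation forces $\theta_{j_1}+\theta_{j_2}-\theta_{j_3}-\theta_{j_4}\approx 0 \pmod 1$ for every achievable vector, so arbitrary targets are unreachable. Allowing tiny rational perturbations of the endpoints does not rescue this: to make the orbit $\varepsilon$-dense in $\mathbb{T}^{J_m}$ one needs $|n|$ of order $(c/\varepsilon)^{J_m}$, and then the companion requirement that $n\ell_m$ be small forces $\ell_m\lesssim \varepsilon^{J_m}$, which drives the ratio $\log(N_1\cdots N_m)/\log(1/\ell_m)$ to $0$ rather than $1$. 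So ``arbitrary phases at every scale'' and ``full Hausdorff dimension'' are directly contradictory.

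The reduction you do carry out correctly --- by uniform continuity it suffices to handle functions constant on the pieces of some fixed level $m$ --- already points to the repair, which is the actual content of Wik's construction and is absent from your sketch. One enumerates a countable family of targets (for each coarse level $m$, the finitely many unimodular functions constant on level-$m$ pieces with values in a finite net), and handles \emph{one} target per construction stage: at a much deeper stage $m'\gg m$ the congruence system has only $J_m$ independent unknown phases, so the cost $(c/\varepsilon)^{J_m}$ is paid against the far smaller scale $\ell_{m'}$, and the selection needed to solve it discards only a negligible proportion of the stage-$m'$ subintervals. With that reordering the mass-distribution estimate you describe does give dimension $1$. Without it (or a substitute such as Kaufman's method), the level-by-level scheme does not close, so the proposal as written does not constitute a proof.
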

For more details on Kronecker sets see \cite{Rudin} and \cite{Kaufman} for a different proof of Proposition~\ref{p:Wik}.

\begin{proof}[Proof of Proposition~{\rm \ref{WMDReg}}]
We will use the Gaussian Measure Construction (GMC),  as in the proof of Theorem~\ref{t:cecomplete}, to obtain the required $\mathbb Z^2$-action $T$.  
We begin by describing the measure $\sigma$ that we use in the construction.

Let $K\subseteq[0,1]$ be a Kronecker set of full Hausdorff dimension and consider the product $K\times K$.   The Hausdorff dimension of $K\times K$ is 2 (see for example \cite{Hatano}).   By Frostman's Lemma \cite{Mattila} this means that for any $\delta > 0$  there is a  finite Borel measure $\sigma$ on $K\times K$ such that for all $\vec a\in \R^2$ and $r > 0$,
$\sigma(\{\vec b \in \R^2: \|\vec b-\vec a\|_2 \le r\})\le r^{2-\delta}$.
Letting 
$\delta=\frac12$ it follows that $\sigma$ vanishes
on all line segments.  But then, the same holds for any convolution power $\sigma^{(k)}=\sigma*\sigma*\dots*\sigma$, $k$ times, with $k \ge 2$.

Now 
just as in Theorem~\ref{t:cecomplete}, 
the GMC with measure $\sigma$ yields an ergodic  
$\Z^2$-action $T$  on a Lebesgue probability space
$(X,\mu)$ such that $\sigma^T_{0}= \sum_{n=1}^\infty \sigma^{(n)}/n!$ is the restricted spectral measure for $T$.  Since 
$\sigma^T_{0}$ vanishes on all line segments, by Theorem~\ref{XXZ2}
$T$ is weak mixing in all directions.  

Fix $\epsilon>0$. By Definition~\ref{Kroneckerset}, there exist $m\in\mathbb Z$ so that  for all $a\in K$,
$|e^{ima}-1| \le \epsilon/2$.   Hence, we can choose vectors $\vec n_k\in\mathbb Z^2$ of the form  $(0,\pm n_k)$, $(\pm n_k,0)$, or $(\pm n_k,\pm n_k)$ such that 
$e^{2\pi i({\vec n}_k\cdot\vec a\,)} \to 1$ 
as  $k \to \infty$  for all $\vec a=(a_1,a_2)\in K\times K$.
It follows that $e^{2\pi i({\vec n}_k\cdot\vec a\,)}\to 1$ in $L^2(\mathbb T,\sigma)$ and thus in $L^2(\mathbb T,\sigma_0^T)$.Therefore by Proposition~\ref{p:maxrigid}, 
$T$ has rigidity sequences $\{\vec n_k\}$ of the form $\{(0,\pm n_k)\}$, 
$\{(\pm n_k,0)\}$, and $\{(\pm n_k,\pm n_k)\}$.  
\end{proof}
 
We note that because we use the GMC, the $\Z^2$-action constructed in the proof of 
Proposition~\ref{WMDReg} embeds in a rigid, weak mixing $\R^2$-action $\overline T$ which is, 
itself, weak mixing in all directions. We note that it is not known if there is a rank one action that 
satisfies Propostion~\ref{WMDReg}. 

\bigskip

Let $\mathcal A_d$ denote the set of all measure preserving $\Z^d$-actions on a Lebesgue probability space $(X,\mu)$. 
Next, we are going to consider directional weak mixing and rigidity for $\Z^d$-actions in the Baire category sense.
To do this, we consider two topologies, the {\em weak topology} and the {\em uniform topology} on the set
$\mathcal A_d$.
These  ideas, are well known for $\Z$-actions (see \cite{Halmos});
for $\Z^d$-actions, a general reference is 
\cite{kechris}.  The definitions and some basic results are mentioned here 
for convenience.

\begin{dfn}
The {\em weak topology} on the set $\mathcal A_d$  is defined by
$T_n\to T$ if $U_{T_n}^{\vec n}f\to U_{T}^{\vec n}f$ for 
all $\vec n\in \Z^d$ and $f\in L^2(X,\mu)$, or equivalently, 
$\mu(T_n^{\vec n}E\triangle T^{\vec n}E)\to 0$ for all measurable $E\subseteq X$.
The {\em uniform topology} on the set $\mathcal A_d$  is defined by
$T_n\to T$ if $\lim_n\sup_{E\subseteq X}\mu(T_n^{\vec n}E\triangle T^{\vec n}E)=0$
for all $\vec n\in \Z^d$. 
\end{dfn}

Clearly, uniform convergence implies weak convergence. 
The weak and uniform topologies on $\mathcal A_d$ are both completely metrizable
(see \cite{kechris}), but the weak topology, $\mathcal A_d$ is also separable 
(see \cite{kechris}), so Polish. The  
metrics that give the uniform and weak topologies on $\mathcal A_d$
depend on the corresponding metrics for the 
the group $\mathcal G:=\mathcal A_1$ of all
invertible measure preserving transformations $R$ on $(X,\mu)$.
The uniform topology is given by the
metric 
\begin{equation}\label{d1}
\delta_1(T,S)=\mu\{x\in X:Tx\ne Sx\},
\end{equation}
 (see \cite{Halmos}). For the weak topology, 
let $\{E_k:k\in\N\}$ be collection measurable sets in $X$, dense in the  
metric $\mu(E\triangle E')$. 
Then the weak topology is given by the metric
$\rho_1(T,S)=\sum_k (\mu(TE_k\triangle SE_k)+\mu(T^{-1}E_k\triangle S^{-1}E_k))$
for $T,S\in\mathcal G$.
For $\vn=(n_1,n_2,\dots,n_d)\in\Z^d$ define $||\vn||_\infty=\max_{i=1}^m |n_i|$. 
Then for $T,S\in\mathcal A_d$, the weak topology is given by the 
complete metric $\rho_d(T,S)=\sum_{\vn\in\Z^d}2^{-||\vn||_\infty}\rho_1(T^{\vn},S^{\vn})$
and the uniform topology is given by metric
\begin{equation}\label{ut}
\delta_d(T,S)=\sum_{\vn\in\Z^d}2^{-||\vn||_\infty}\delta_1(T^{\vn},S^{\vn}),
\end{equation}
(see \cite{kechris}).
For the case of $\R^d$-actions, the technicalities 
are more complicated, but we hope to address them 
in a later paper.

We call a set $\mathcal B\subseteq\mathcal A_d$ a {\em property}, (i.e., a  $\Z^d$-action $T$
has the property if $T\in \mathcal B$). For $T\in\mathcal A_d$ and $R\in\mathcal A$, 
define the $\Z^d$-action $S=RTR^{-1}=\{RT^{\vec n}R^{-1}\}_{\vec n\in\Z^d}\in\mathcal A_d$ 
isomorphic to $T$. We are usually most interested in {\em isomorphism invariant} properties: 
$RTR^{-1}\in\mathcal B$ whenever $T\in\mathcal B$.  Properties like ergodicity, weak mixing, 
and directional ergodicity or weak mixing in a particular set of directions are isomorphism invariant properties. 
Another example is 
the {\em isomorphism class} $\mathcal C_T:=\{RTR^{-1}:R\in\mathcal A\}$ 
of any $T\in\mathcal A_d$. 
We say a property $\mathcal B$ is {\em generic} (or that {\em a generic $\Z^d$-action $T$
has the property}) 
if $\mathcal B\subseteq \mathcal A_d$ is  residual (usually, we show it has a  
dense $G_\delta$ subset).

Ryzhikov \cite{Ryz} showed that a generic $\Z^d$-action $T$ is weak mixing, and also 
that  a generic $\Z^d$-action $T$
embeds in an $\R^d$-action $\overline T$
such that   $\overline T^{|L}$ is weak mixing for every $L\in\mathbb G_d$.  Our Corollary~\ref{t:wmemb}, 
says that whenever a weak mixing $\Z^d$-action $T$ embeds 
in an $\R^d$-action $\overline T$, the $\Z^d$-action  $T$ and the $\R^d$-action 
$\overline T$ have the same weak mixing directions.
Combining these two, we see  
that weak mixing in all directions is generic for $\Z^d$-actions $T$.
Our goal now is to give a direct proof of this.

\begin{thm} \label{generic}
Let $\mathcal W\mathcal A_d$ be the set of all weakly mixing $T\in\mathcal A_d$ that are weakly mixing in all directions
(an isomorphism invariant property).  
Then  
$\mathcal W\mathcal A_d$ is a dense $G_\delta$ subset of $\mathcal A_d$
in the weak-topology.
\end{thm}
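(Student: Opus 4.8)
The plan is to establish two things separately in the weak topology: that $\mathcal{WA}_d$ is dense, and that it is a $G_\delta$ set. The spectral reduction underlying both is the following. By Proposition~\ref{likeXX} together with Proposition~\ref{super2} (weak mixing in a smaller direction forces it in every larger one, so the binding constraints come from the $1$-dimensional directions), $T\in\mathcal{WA}_d$ if and only if the aperiodic spectral measure $\varsigma_\co^{\widetilde T}$ charges no affine hyperplane of $\R^d$. Indeed, a wall of dimension $<d-1$ is contained in a codimension-one wall, and an atom is contained in a hyperplane, so ``no hyperplane wall'' simultaneously encodes ordinary weak mixing of $T$ and weak mixing in every direction.

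For density I would exhibit one element of $\mathcal{WA}_d$ and use that the property is an isomorphism invariant. The $\Z^d$-Bernoulli shift $B$ is strong mixing, hence by Corollary~\ref{sw} it lies in $\mathcal{WA}_d$. Since $\mathcal{WA}_d$ is conjugation invariant and the weak closure of the conjugacy class $\{RBR^{-1}:R\in\mathcal A\}$ is all of $\mathcal A_d$ (the $\Z^d$ form of the Halmos--Rokhlin density theorem, see \cite{kechris}), density follows immediately.

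The $G_\delta$ part is where the real work lies, and the device is to detect weak mixing in every direction with a \emph{fixed} countable family, exploiting that the subspace $L^2_\co(\widetilde X,\widetilde\mu)$ does not depend on $T$. Fix a countable dense set $\{f_i\}\subseteq L^2_\co(\widetilde X,\widetilde\mu)$ and for $R>0$ put
\[
\Phi_{R,i}(T)=\sup_{\|\vec v\|=1}\frac1R\int_0^R\bigl|(U_{\widetilde T}^{\,s\vec v}f_i,f_i)\bigr|^2\,ds .
\]
For fixed $s,\vec v,f_i$ the suspension correlation $(U_{\widetilde T}^{\,s\vec v}f_i,f_i)$ is weakly continuous in $T$ (the floor maps defining $\widetilde T$ take countably many values, and weak convergence of $T$ gives strong convergence of each $U_T^{\vec n}$, so dominated convergence applies); integrating over $s\in[0,R]$ and taking a supremum over the compact unit sphere keeps $\Phi_{R,i}$ weakly continuous. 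I would then prove
\[
T\in\mathcal{WA}_d \iff \forall i:\ \inf_{R>0}\Phi_{R,i}(T)=0 ,
\]
which exhibits $\mathcal{WA}_d=\bigcap_i\bigcap_k\bigcup_{R>0}\{\Phi_{R,i}<1/k\}$ as a $G_\delta$ set.

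One implication is routine: if some $\varsigma_{f_i}^{\widetilde T}$ charges a hyperplane perpendicular to a direction $\vec v_0$, then by Wiener's theorem $\tfrac1R\int_0^R|(U_{\widetilde T}^{\,s\vec v_0}f_i,f_i)|^2\,ds$ converges to a positive limit, so $\Phi_{R,i}\ge c>0$ and the right-hand side fails; since $\sup_i\varsigma_{f_i}^{\widetilde T}$ attains the maximal type $\varsigma_\co^{\widetilde T}$ (Proposition~\ref{sst}), failure of the right side is equivalent to the presence of a hyperplane wall. The hard direction, and the main obstacle, is the converse: if $\varsigma_\co^{\widetilde T}$ (hence every $\varsigma_{f_i}^{\widetilde T}$) charges no hyperplane, then $\Phi_{R,i}(T)\to0$, i.e. the pointwise Wiener convergence $\tfrac1R\int_0^R|\widehat{\varsigma_{f_i}^{\widetilde T}}(s\vec v)|^2\,ds\to0$ must be \emph{uniform} over the sphere of directions. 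I would obtain this by rewriting the averaged integral as $\int_{\R^d}K_R(\langle\vec c,\vec v\rangle)\,d\rho_i(\vec c)$ against the finite autocorrelation measure $\rho_i=\varsigma_{f_i}^{\widetilde T}\ast\check\varsigma_{f_i}^{\widetilde T}$, where $|K_R|\le1$ and $K_R\to\mathbf 1_{\{0\}}$; then the hypothesis $\rho_i(\vec v^\perp)=0$ for every $\vec v$, the finiteness of $\rho_i$ (giving continuity from above on the shrinking slabs $\{|\langle\cdot,\vec v\rangle|<\delta\}$), and compactness of the unit sphere combine to bound the slab masses uniformly in $\vec v$, forcing $\Phi_{R,i}\to0$. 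Making this uniformity precise while controlling the noncompact, $\Z^d$-spread support of $\varsigma_\co^{\widetilde T}$ on $\R^d$ is the crux; the remainder is bookkeeping with the weak topology and Wiener's lemma.
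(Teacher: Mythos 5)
Your proposal reaches the right conclusion, and your density argument (a mixing action lies in $\mathcal W\mathcal A_d$ by Corollary~\ref{sw}, and its conjugacy class is weakly dense by Proposition~\ref{densconj}) is essentially the same device the paper uses to show its closed sets have empty interior. For the $G_\delta$ part you take a genuinely different route. The paper never passes to the suspension: it fixes a dense sequence $\{f_n\}\subseteq L^2_0(X,\mu)$, observes via Theorem~\ref{XXZ2} that failure of weak mixing in some direction is equivalent to some $\sigma_{f_n}^T$ giving mass at least $1/k$ to a \emph{flat disc} in $\T^d$ (a bounded piece of an affine subspace, or a single point), and proves directly in Lemma~\ref{l:closednoint} that each such set $\mathcal F_k(f_n)$ is closed, using weak$^*$ convergence of the spectral measures together with compactness of the family of flat discs to extract a limiting disc still carrying mass $1/k$; this exhibits $\mathcal W\mathcal A_d^c$ inside a countable union of closed nowhere dense sets with no Fourier analysis beyond the spectral theorem. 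You instead work on $\R^d$ with the hyperplane criterion for $\varsigma_{\co}^{\widetilde T}$ and detect walls quantitatively through a Wiener average. What your approach buys is an explicit functional whose vanishing characterizes membership in $\mathcal W\mathcal A_d$; what it costs is the uniformity-over-the-sphere analysis, which your reverse-Fatou argument (using finiteness of the autocorrelation measure $\rho_i$) does handle correctly.

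One step in your write-up asserts more than it proves: the claim that ``taking a supremum over the compact unit sphere keeps $\Phi_{R,i}$ weakly continuous.'' A supremum over a compact set of a function that is merely continuous in $T$ for each fixed $\vec v$ is only \emph{lower} semicontinuous, whereas openness of $\{\Phi_{R,i}<1/k\}$ requires \emph{upper} semicontinuity. You need joint continuity in $(T,\vec v)$. This is true but needs an argument: the increment $\|U_{\widetilde T}^{s(\vec v-\vec v_0)}f_i-f_i\|$ is controlled uniformly in $T$ and in $s\in[0,R]$ because near the identity $\widetilde T$ acts by translation in the fiber except on a set of measure $O(\|s(\vec v-\vec v_0)\|)$, where absolute continuity of $\int|f_i|^2$ takes over. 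A second, smaller point: the implication that a hyperplane wall for $\varsigma_{f_i}^{\widetilde T}$ forces $\inf_{R>0}\Phi_{R,i}>0$ needs a remark about small $R$, since Wiener's lemma only controls the limit as $R\to\infty$; this is easy because the averaged integrand is continuous, nonnegative, and equals $\|f_i\|^4$ at $s=0$, so its average over $[0,R]$ is bounded below on any bounded range of $R$. With these repairs your argument is sound.
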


The remainder the paper consists of the proof of Theorem~\ref{generic}.
It should be noted that 
Glasner and King \cite{GlasnerKing} show that if $\mathcal B$ is an isomorphism invariant property 
 with the additional assumption that $\mathcal B$ is a Baire set, 
then either $\mathcal B$ or its complement is generic. If we knew
$\mathcal W\mathcal A_d$ was a Baire set, this would simplify the proof below. 
However, we do 
not know whether or not this is the case.

The proof of Theorem~\ref{generic} relies on the following proposition, 
which is well know for $\Z$-actions (see e.g., \cite{Halmos} or \cite{KT}). 
We include a $\Z^d$ proof here for completeness.
This is essentially a corollary of the Rohlin lemma for $\Z^d$ (see \cite{KatzWeiss} or \cite{Conze}),
which says if $T$ is free, then for any $\epsilon>0$ and $m\ge 0$, with 
$Q_m:=\{\vn\in\Z^d:||\vn||_\infty\le m\}$,  there exists a 
set $B\subseteq X$,
called a shape-$Q_m$ {\em Rokhlin tower base}, 
such that 
$T^\vn B$ for  $\vn\in Q_m$ are pairwise disjoint, and $\mu(T^{Q_m}B)=1-\epsilon$.

\begin{prop}\label{densconj}
Let $\mathcal  A_d$ be the set of all measure preserving $\Z^d$-actions $T$  on $(X,\mu)$.
If $T\in\mathcal A'_d$ (i.e., $T$ is free) then the set 
$\mathcal C_T$  is dense in $\mathcal A_d$ in the weak topology.
\end{prop}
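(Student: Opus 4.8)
The plan is to meet an arbitrary basic weak neighborhood of a target $S\in\mathcal A_d$ with a conjugate $RTR^{-1}$. Since $\rho_d$ is a sum with summable weights $2^{-\|\vn\|_\infty}$ of the quantities $\rho_1(T^{\vn},S^{\vn})$, and each $\rho_1$ is itself a weighted sum over a dense family of sets, it is enough, given $\delta>0$, to fix finitely many sets $E_1,\dots,E_k$ and an integer $N$ and to produce $R\in\mathcal A$ with $\mu\big(RT^{\vn}R^{-1}E_i\,\triangle\,S^{\vn}E_i\big)<\delta$ for all $i$ and all $\vn\in Q_N=\{\vn:\|\vn\|_\infty\le N\}$. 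Because the totally aperiodic $\Z^d$-actions are weakly dense in $\mathcal A_d$ (a standard fact), I may also assume the target $S$ is free, so that the Rokhlin lemma applies to $S$ as well as to $T$.

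The construction uses two Rokhlin towers of the same shape. Fix $m\gg N$ and $\epsilon>0$. Applying the $\Z^d$ Rokhlin lemma to $S$ and to $T$ with shape $Q_m$, obtain bases $A$ and $B$ with $\{S^{\vj}A\}_{\vj\in Q_m}$ and $\{T^{\vj}B\}_{\vj\in Q_m}$ pairwise disjoint, each covering more than $1-\epsilon$ of $X$; after trimming the bases to a common measure I may take $\mu(A)=\mu(B)$. Choose a measure preserving bijection $r\colon A\to B$ and define a measure preserving $R_0\in\mathcal A$ by $R_0(S^{\vj}a)=T^{\vj}r(a)$ for $a\in A,\ \vj\in Q_m$, extended arbitrarily (but measure preservingly) off the $S$-tower. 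Set $R=R_0^{-1}$, so that $RTR^{-1}=R_0^{-1}TR_0$ is a genuine conjugate of $T$.

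The key estimate is that $R_0$ conjugates $S$ to $T$ exactly on the interior of the tower. Let $G=\bigsqcup_{\vj\in Q_{m-N}}S^{\vj}A$. For $x=S^{\vj}a\in G$ with $\vj\in Q_{m-N}$ and any $\vn\in Q_N$ one has $\vj+\vn\in Q_m$, so $R_0S^{\vn}x=T^{\vj+\vn}r(a)=T^{\vn}R_0x$; applying the same identity to $-\vn$ shows that $RT^{\vn}R^{-1}$ and $S^{\vn}$ agree on $G$, whence $RT^{\vn}R^{-1}E_i\,\triangle\,S^{\vn}E_i\subseteq X\setminus G$. Since $\mu(X\setminus G)\le\epsilon+|Q_m\setminus Q_{m-N}|/|Q_m|$, choosing $\epsilon$ small and $m$ large makes this bound smaller than $\delta$, which is exactly what is required.

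I expect the main obstacle to be this boundary effect. The level-matching map $R_0$ conjugates $S$ to $T$ only where a whole $Q_N$-window of the orbit stays inside the box $Q_m$, i.e.\ on the interior $Q_{m-N}$; consequently the whole argument rests on the isoperimetric (F\o lner) fact that $|Q_m\setminus Q_{m-N}|/|Q_m|=O(N/m)\to0$ as $m\to\infty$. This is the genuinely $d$-dimensional point, replacing the single ``bad top level'' of the classical $\Z$ argument, and it is also what forces the shape $Q_m$ to be a large box. The reduction to free $S$ and the extension of $R_0$ off the tower are routine, using only that $(X,\mu)$ is a nonatomic Lebesgue space.
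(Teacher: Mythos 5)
Your proof is correct and follows essentially the same route as the paper: reduce to a free target $S$ (the paper cites Glasner--King for density of $\mathcal A'_d$), build shape-$Q_m$ Rokhlin towers for $S$ and $T$, conjugate $T$ by the level-matching map, and control the error by the F\o lner estimate $|Q_m\setminus Q_{m-N}|/|Q_m|\to 0$. The only cosmetic difference is that the paper states its estimate as uniform density of $\mathcal C_T$ in $\mathcal A'_d$ and then passes to the weak topology, whereas you phrase it via basic weak neighborhoods --- but your bound on $\mu(X\setminus G)$ is exactly that uniform estimate.
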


\begin{proof}
Let $S\in \mathcal A'_d$ and fix $\delta>0$. 
We claim that there is $R\in\mathcal G$ 
so that  $\delta_d(S,RTR^{-1})<\delta$.
This shows that $\mathcal C_T$ is uniformly dense
in $\mathcal A'_d$.
But since uniform convergence implies weak convergence,
it follows that $\mathcal C_T$ is weakly dense
in $\mathcal A'_d$. 
Finally, Glasner and King \cite{GlasnerKing} show
that $\mathcal A'_d$ is dense $G_\delta$ in $\mathcal A_d$,
and the lemma follows.

To prove the claim, choose
$K$ so large that 
$\sum_{||\vn||_\infty>K}2^{-||\vn||_\infty}<\delta/2$.
Let $\gamma<(\delta/2)/(2K+1)^d$, and let
$m$ be large enough that $\rho:=(2m-d K)/(2m+1)>1-\gamma/2$,
and let $\epsilon<\gamma/2$. 

Since $T,S\in \mathcal A'_d$, we can apply the Rohlin lemma for $\Z^d$ to 
find shape-$Q_m$
Rohlin tower bases for $T$ and $S$ with
$\mu(T^{Q_m}A)=1-\epsilon$, and $\mu(T^{Q_m}B)=1-\epsilon$. 
Since 
$\mu(A)=\mu(B)=(1-\epsilon)/(2m+1)^d$, we can  define a map 
$R(B)=A$, bijective and measure preserving,  
but otherwise arbitrary. Then
we extend 
$R$ to be a measure preserving map $R:S^{Q_m}B\to T^{Q_m}A$ by 
$R(S^{\vec n} x)=S^{\vec n}(Rx)$ for $x\in B$ and $\vec n\in Q_m$.
Finally, we define $R:(S^{Q_m}B)^c\to (S^{Q_m}A)^c$, again
bijective and measure preserving, but otherwise arbitrary. 
Clearly $R\in\mathcal G$. Let $T_1=RTR^{-1}$. By the definition of $R$, 
$B$ is a shape-$Q_m$ Rokhlin tower base for both $S$ and $T_1$.
 
Now suppose $||\vn||_\infty\le K$. Then $(Q_m\cap(Q_m-\vn))+\vn\subseteq Q_m$.
Thus if $x\in S^{Q_m\cap(Q_m-\vn)}B$, then $S^\vn x=T_1^\vn x$
since $B$ is a shape $Q_m$ Rokhlin tower base for both $S$ and $T_1$.
By (\ref{d1}) it follows 
that
\begin{equation}\label{size}
\delta_1(S^\vn,T_1^\vn)\le 1-\mu(S^{Q_m\cap(Q_m-\vn)}B).
\end{equation}
Now $\#(Q_m)=(2m+1)^d$, and by the binomial theorem
\begin{align*}
\#\left(Q_m\cap(Q_m-\vn)\right)&\ge (2m+1)^{d-1}(2m-d||\vn||_\infty)\\
&\ge (2m+1)^{d-1}(2m-d K).
\end{align*}
Thus 
\begin{align*}
\mu(S^{Q_m\cap(Q_m-\vn)}B)&\ge\mu(B)(2m+1)^{d-1}(2m-d K)\\
&\ge (1-\epsilon)\rho>(1-\gamma/2)^2>1-\gamma.
\end{align*}
 
By (\ref{size}),
$\delta_1(S^\vn,T_1^\vn)<\gamma$ 
for all
$||\vn||_\infty\le K$.
So
(\ref{ut}) implies
\begin{align*}
\delta_d(S,RTR^{-1})=\delta_d(S,T_1)&=\sum_{\vn\in\Z^d}2^{-||\vn||_\infty}\delta_1(S^{\vn},T_1^{\vn})\\
&\le\delta/2+\sum_{||\vn||_\infty\le K}\delta_1(T^{\vn},T_1^{\vn})\\
&\le\delta/2+ (2K+1)^d \gamma<\delta.
\end{align*}
This proves the claim.
\end{proof}

In what 
follows, for $T\in\mathcal A_d$ and $f\in L^2(X,\mu)$, we write  $\sigma_f^T$ 
for the spectral measure 
of $f$ with respect to  the action $T$, a measure on $\T^d=\widehat{\Z^d}$.
Also, let $\mathcal I$ denote the set of all the set of all 
flat discs in $\T^d$ (see Definition~\ref{flat}).

\begin{lem}\label{l:closednoint}  Let $f\in L_0^2(X,\mu)$ and $k \ge 1$. Define 
\begin{equation*}
\mathcal F_k(f):=\{T\in\mathcal A_d:\sigma^T_f(I)\ge 1/k\text{ for some } I\in\mathcal I\}.
\end{equation*}
Then $\mathcal F_k(f)$ is a closed subset of $\mathcal A_d$ with no interior.
\end{lem}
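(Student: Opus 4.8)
The plan is to handle closedness and emptiness of interior separately, both resting on one observation: weak convergence of actions forces weak-$*$ convergence of spectral measures on the compact dual $\T^d$. Concretely, if $T_n\to T$ in the weak topology then $U_{T_n}^{\vec m}f\to U_{T}^{\vec m}f$ in $L^2(X,\mu)$ for every $\vec m\in\Z^d$, so the correlation functions satisfy $\widehat{\sigma^{T_n}_f}(\vec m)=(U_{T_n}^{\vec m}f,f)\to(U_{T}^{\vec m}f,f)=\widehat{\sigma^{T}_f}(\vec m)$. As all the $\sigma^{T_n}_f$ are measures on $\T^d$ of fixed total mass $\|f\|_2^2$, pointwise convergence of their Fourier coefficients together with the density of trigonometric polynomials in $C(\T^d)$ gives $\sigma^{T_n}_f\to\sigma^{T}_f$ weak-$*$.

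\textbf{Closedness.} Given $T_n\in\mathcal F_k(f)$ with $T_n\to T$, I would choose flat discs $I_n$ with $\sigma^{T_n}_f(I_n)\ge 1/k$ and pass to their closures $\overline{I}_n$ (affine subspace intersected with a closed ball), which satisfy the same bound. Each $\overline{I}_n$ is described by a dimension $e_n\in\{0,\dots,d-1\}$, a direction in the compact Grassmannian $\G_{e_n,d}$, a center in the compact group $\T^d$, and a radius bounded by the diameter of $\T^d$. Pigeonholing on $e_n$ and invoking compactness of the remaining parameters, I would extract a subsequence along which $\overline{I}_n\to K$ in the Hausdorff metric, where $K$ is again a closed flat disc (or a single point in the radius-zero degeneration). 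The analytic heart is an upper-semicontinuity estimate: for each $\varepsilon>0$ one has $\overline{I}_n\subseteq\overline{K^\varepsilon}$ for large $n$, so weak-$*$ convergence and the Portmanteau inequality for the closed set $\overline{K^\varepsilon}$ give $1/k\le\limsup_n\sigma^{T_n}_f(\overline{I}_n)\le\sigma^{T}_f(\overline{K^\varepsilon})$; letting $\varepsilon\downarrow 0$ and using continuity from above yields $\sigma^{T}_f(K)\ge 1/k$. Finally I would enclose $K$ in an open flat disc $I'$ (same affine subspace, a slightly larger open ball, or $I'=K$ when $K$ is a point), so that $\sigma^T_f(I')\ge\sigma^T_f(K)\ge 1/k$ and hence $T\in\mathcal F_k(f)$.

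\textbf{Empty interior.} To see $\mathcal F_k(f)^c$ is dense, I would produce a dense family of actions assigning zero mass to every flat disc. Let $S_0\in\mathcal A_d$ be a free action with Lebesgue spectrum (for instance a Bernoulli $\Z^d$-action realized on $(X,\mu)$), so that its reduced spectral measure is equivalent to Lebesgue measure $\lambda$ on $\T^d$. Since having Lebesgue spectrum is an isomorphism invariant, every $S$ in the conjugacy class $\mathcal C_{S_0}=\{RS_0R^{-1}:R\in\mathcal G\}$ again satisfies $\sigma^S_0\ll\lambda$, and because $f\in L^2_0(X,\mu)$, Proposition~\ref{sst} gives $\sigma^S_f\ll\sigma^S_0\ll\lambda$. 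Every flat disc lies in a proper affine subset of $\T^d$ and is therefore $\lambda$-null, so $\sigma^S_f(I)=0<1/k$ for all $I\in\mathcal I$; that is, $S\notin\mathcal F_k(f)$. By Proposition~\ref{densconj}, $\mathcal C_{S_0}$ is dense in $\mathcal A_d$, whence $\mathcal F_k(f)^c\supseteq\mathcal C_{S_0}$ is dense and $\mathcal F_k(f)$ has empty interior.

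\textbf{Main obstacle.} I expect the genuine difficulty to lie entirely in the closedness half: making precise the compactness of the space of flat discs in $\T^d$, checking that a Hausdorff limit of flat discs is again contained in a flat disc, and carrying the mass bound $1/k$ through the weak-$*$ limit via upper semicontinuity. The empty-interior half is comparatively soft, following from density of conjugacy classes (Proposition~\ref{densconj}) and the isomorphism-invariance of absolute continuity of the maximal spectral type.
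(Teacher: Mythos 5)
Your proposal is correct and follows essentially the same route as the paper: weak convergence of actions gives weak-$*$ convergence of the spectral measures $\sigma_f^{T_n}\to\sigma_f^T$, a compactness/subsequence argument on the family of flat discs together with an upper-semicontinuity (Portmanteau-type) estimate carries the bound $1/k$ to a limiting flat disc, and empty interior follows from the density (via Proposition~\ref{densconj}) of a conjugacy class of free actions whose spectral measures annihilate every flat disc. The only cosmetic difference is in the empty-interior half, where the paper invokes density of strong mixing actions and the Rajchman property (Lemma~\ref{norajchman}) while you use a free action with Lebesgue spectrum; both give the same obstruction, since either hypothesis forces $\sigma_f^S(I)=0$ for all $I\in\mathcal I$.
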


\begin{proof} 
Let $T_n\in \mathcal F_k(f)$ so that $T_n\to T$ in $\mathcal A_d$ in the weak topology. 
Equivalently, $U_{T_n}^{\vec n} f\to U_T^{\vec n} f$  for all 
$f\in L^2_0(X,\mu)$.   
Since $\widehat{\sigma_f^T}(\vec n\,)=(U_T^{\vec n} f,f)$ 
it follows that 
\[\int_{\T^d}e^{-2\pi i (\vn\cdot\vt\,)}d\sigma_f^{T_n}\to \int_{\T^d}e^{-2\pi i (\vn\cdot\vt\,)}d\sigma_f^{T},\]
which, by the Stone-Weierstrass theorem implies $\sigma_f^{T_n}\to\sigma_f^T$ in the weak* topology.   

Since $T_n\in \mathcal F_k(f)$, we have that
$\sigma_f^{T_n}(I_n) \ge1/k$ for some $I_n \in \mathcal I$.  By passing to a subsequence, we can 
assume that there is a flat disc $I$ with the property that for all 
$U\supseteq I$ open,
there is an open set $U'$ with $I\subseteq U'\subseteq U$ so that $I_n\subseteq U'$ for all sufficiently 
large $n$.  Thus for such $n$ we have $\sigma^{T_n}_f(U')\ge1/k$.  Let $h\in C(\T^2)$ satisfy 
$h=1$ on $U'$ and $h=0$ on $U^c$. Then for all $\epsilon>0$, and sufficiently large $n$, we have
\[
\frac1k\le\sigma^T_f(U')\le\int_{\T^d} h\ d\sigma_f^T\le\int_{\T^d} h\ d\sigma_f^T +\epsilon.
\]
Letting $\epsilon$ go to zero and noting that $\chi_U\ge h$ we have
\[
\sigma_f^T(U)\ge\int_{\T^d} h\ d\sigma_f^T\ge 1/k.
\]
Using the outer regularity of $\sigma$, we conclude that $\sigma_f^T(I)\ge 1/k$, so 
$T\in\mathcal F_k(f)$.

To see that $\mathcal F_k(f)$ 
has no interior, we note that the strong mixing actions $T$ are dense in 
$\mathcal A_d$. In particular, if $T$ is strong mixing then $T$ is free 
(i.e., $T\in\mathcal A'_d$), so by Proposition~\ref{densconj},  
$\mathcal C_T=\{RTR^{-1}:R\in \mathcal G\}$ is dense in $\mathcal A_d$. 
But every $T'\in  \mathcal C_T$ is isomorphic to $T$, so $T'$ is strong mixing.
Thus there is a dense set of $T\in\mathcal A_d$ so that 
$T\notin \mathcal F_k(f)$ for any $k$.
\end{proof}

\begin{proof}[Proof of Theorem~{\rm \ref{generic}}] Fix a dense sequence $\{f_n\}\subseteq L^2_0(X,\mu)$.  
Suppose $T\notin\mathcal W\mathcal A_d$, i.e. $T$ is not weakly mixing in some direction.  
Then there is $f\in L^2_0(X,\mu)$ and $k \ge 1$ such that the spectral measure $\sigma_f^T$ 
has $\sigma_f^T(I) \ge 1/k$ for some $I\in \mathcal I$.  Approximating $f$ by the dense sequence, 
we  have that there is some $f_n$ with $\sigma_{f_n}^T(I) \ge 1/(2k)$. 
In particular, 
$\mathcal W\mathcal A^c_d \subseteq \bigcup\limits_{k=1}^\infty \bigcup\limits_{n=1}^\infty \mathcal F_k(f_n)$, 
 so by Lemma~\ref{l:closednoint}, $\mathcal W\mathcal A_d$ contains dense a  $G_\delta$. 
\end{proof}

\section{Further directions}

In this paper, we have described a general theory of directional ergodic properties, as well as examples 
$T$ of both $\Z^d$-
and $\R^d$-actions that realize certain sets of $\ce_T$ and $\cw_T$. The most interesting cases are when 
$T$ is weak mixing because otherwise there can be no weak mixing directions (Proposition~\ref{p:nwm}). If $T$ is strong mixing (which implies weak mixing),  
then $T$ is strong mixing in every direction, and thus weak mixing and ergodic  in every direction
(Colollary~\ref{sw}). Mixing examples $T$ include Bernoulli actions, which are positive entropy, 
but also zero entropy examples like the  Ledrappier's $\Z^2$-action \cite{LDr} which is strong (but not multiple) mixing,
 and even has Lebesgue spectrum.

A more interesting case is when $T$ is weak mixing but not strong mixing. Theorems~\ref{whatdirections} and 
\ref{t:cecomplete} completely 
characterize what sets of weak mixing directions are possible in this situation. 
However, our Gaussin examples in Theorem~\ref{t:cecomplete} (as well as the
generalized Bergelson-Ward actions mentioned in Example~\ref{gbw}), somehow seem a little  artificial.
Probably the most interesting open question here is what sort of directional properties 
more natural types of weak but not strong mixing $\Z^d$ and $\R^d$-actions $T$ can have?
One natural class of examples of $\R^2$-actions to look at are the
weakly mixing finite local complexity (FLC) substitution tiling  systems, because 
these actions $T$ are known never to be strong mixing (\cite{Sol}).
(The Penrose tiling dynamical system, Example~\ref{pentil} is an FLC substitution, but not weak mixing, and 
the pinwheel tiling dynamical system, Example~\ref{pintil}, is not FLC). 
We conjecture that weakly mixing FLC substitution tiling $\R^2$-actions $T$ 
are (typically) weak mixing in every direction. In fact by Theorem~\ref{p:ergiswm}, it would be 
enough to know they were ergodic in every direction.  

An interesting related set of questions concerns the property of {\em minimal self joining} (MSJ) for actions $T$ that are 
weak mixing but not mixing. Examples of MSJ actions include the Chacon $\Z^2$-action $T$, discussed in \cite{PR}, and 
the 
closely related Chacon $\R^2$-action $T$ in \cite{RC} (which is actually a fusion tiling dynamical system). 
The Chacon $\Z^2$-action $T$ is 
weak mixing in every (completely) rational direction \cite{PR}, which is a consequence of its MSJ property. 
It is easy to extend this proof to the Chacon $\R^2$-action $T$
to show it is weak mixing in every direction.
A natural conjecture, which is probably not too difficult, 
is that a $\Z^d$-action with MSJ is weak mixing in every direction (rational or irrational). 
In particular, this would apply to the Chacon $\Z^2$-action.

As we have noted in the introduction, there are typically two ways to define directional properties, 
and the one we have chosen in this paper is via the unit suspension. But unit suspensions are not the only way to study 
the direction $L$ properties of a $\Z^d$-action $T$. One can also  
study transformations $\{T^{\vec n_i}\}$ for sets of vectors 
$\vec n_i\in\Z^d$ that approximate $L$. 
Recently, Liu and Xu (\cite{LX},\cite{LX2}) have defined directional properties this way.
As we have observed, the two ways to study directional properties agree in some cases
(\cite{PIsrael},  \cite{BL} and  \cite{JSRec}).  However in other  
cases they may disagree (see \cite{Danrec}). The relationship between this work and
that of Liu and Xu remains to be studied. 

Finally, there are many open questions that have to do with directional rigidity, as discussed in Section~\ref{s:rigidity}.
Directional rigidity for any $L\in\G_d$ seems to make sense for an $\R^d$ action $T$ by defining  it to mean 
$T^{\vec v_i}\to I$, in the weak topology, 
for $\vec v_i\in L$. Directional rigidity for $L$ then implies directional rigidity $L'\supseteq L$,
and in particular, rigidity. The same idea works for rational directions in $\Z^d$ as already noted in Section~\ref{s:rigidity}.
One idea for defining directional rigidity for a $\Z^d$-action $T$, in a not necessarily rational direction $L$,
which is keeping with the philosophy of this paper, is to use 
the Koopman representation $U_{\widetilde T}$ 
restricted to $L^2_\co(\widetilde X,\widetilde \mu)$. 
The definition would be $U_{{\widetilde T}^{\vec v_i}}\to I$ for $\vec v_i\in L$. 
 In both cases ($\R^d$ and $\Z^d$) directional rigidity is clearly a spectral property,
 but we don't know much else about it. 
 Another approach would be (as in the 
 the previous paragraph) to define directional rigidity to mean 
that $T^{\vec n_i} \to I$  for  vectors 
$\vec n_i\in\Z^d$ that approximate $L$ in some way (e.g., $||n_i-L||\to 0$ as $i\to\infty$).

\def\cprime{$'$}


\begin{thebibliography}{10}

\bibitem{BG}
Michael Baake and Uwe Grimm.
\newblock {\em Aperiodic Order. Volume 1: A Mathematical Introduction}.
\newblock Cambridge University Press, Cambridge, 2013.

\bibitem{BdJLR}
Vitaly Bergelson, Andres del Junco, Mariusc Lema\'{n}czyk, and Joseph Rosenblatt.
\newblock Rigidity and non-recurrence along sequences.
\newblock {\em Ergodic Theory Dynam. Systems}, 34(5):1464--1502, 2014.

\bibitem{BWEx}
Vitaly Bergelson and Tom Ward.
\newblock Private communication.

\bibitem{Grass}
M.~Boothby.
\newblock {\em An introduction to Riemanian manifolds and differential
  geometry}.
\newblock Academic Press, New York, San Francisco, London, 1975.

\bibitem{BL}
Mike Boyle and Douglas Lind.
\newblock Expansive subdynamics.
\newblock {\em Trans. Amer. Math. Soc.}, 349(1):55--102, 1997.

\bibitem{RC}
Regina Buckley Cohen.
\newblock {\em A Chacon $\R^2$ action and a proof of two-fold self-joining}.
\newblock Dissertation. Bryn Mawr College.
\newblock December 10, 1993.

\bibitem{Conze}
J.~P. Conze.
\newblock Entropie d'un groupe ab\'{e}lien de transformations.
\newblock {\em Z. Wahrscheinlichkeitstheorie und Verw. Gebiete}, 25:11--30,
  1972/73.

\bibitem{KFS}
I.~P. Cornfeld, S.~V. Fomin, and Ya.~G. Sina\u{\i}.
\newblock {\em Ergodic theory}. 
\newblock Springer-Verlag, New York, 1982.

\bibitem{Cortez}
Mar\'{\i}a~Isabel Cortez.
\newblock {${\Bbb Z}^d$} {T}oeplitz arrays.
\newblock {\em Discrete Contin. Dyn. Syst.}, 15(3):859--881, 2006.

\bibitem{Danrec}
Alexandre~I. Danilenko.
\newblock Directional reccurence and directional rigidity for infinite measure
  preserving actions of nilpotent lattices.
\newblock {\em Ergodic Theory Dynam. Systems}, 37(6):1841--1861, 2017.

\bibitem{DL}
Alexandre~I. Danilenko and Mariusz Lema{\'n}czyk.
\newblock Spectral multiplicities for ergodic flows.
\newblock {\em Discrete Contin. Dyn. Syst.}, 33(9):4271--4289, 2013.

\bibitem{GlasnerKing}
Eli Glasner and Jonathan King.
\newblock A zero-one law for dynamical properties.
\newblock {\em Contemp. Math.}, 215:231--242, 1998.

\bibitem{Glasner}
Eli Glasner.
\newblock {\em Ergodic theory via joinings}.
\newblock American Mathematical Society, Providence, RI, 2003.

\bibitem{Halmos}
Paul~R. Halmos.
\newblock {\em Lectures on ergodic theory}.
\newblock The Mathematical Society of Japan, 1956.

\bibitem{Hatano}
Kaoru Hatano.
\newblock Notes on {H}ausdorff dimensions of {C}artesian product sets.
\newblock {\em Hiroshima Math. J.}, 1:17--25, 1971.

\bibitem{JSRec}
Aimee S.~A. Johnson and Ay\c{s}e~A. \c{S}ahin.
\newblock Directional recurrence for infinite measure preserving {$\Bbb{Z}^d$}
  actions.
\newblock {\em Ergodic Theory Dynam. Systems}, 35(7):2138--2150, 2015.

\bibitem{KT}
Anatole Katok and Jean-Paul Thouvenot.
\newblock Spectral properties and combinatorial constructions in ergodic
  theory.
\newblock In {\em Handbook of dynamical systems. Vol. 1{B}}.
  Elsevier B. V., Amsterdam,  649--743, 2006.

\bibitem{KatzWeiss}
Yitzhak Katznelson and Benjamin Weiss.
\newblock Commuting measure-preserving transformations.
\newblock {\em Israel J. Math.}, 12:161--173, 1972.

\bibitem{Kaufman}
Robert Kaufman.
\newblock A functional method for linear sets.
\newblock {\em Israel J. Math.}, 5:185--187, 1967.

\bibitem{kechris}
Alexander~S. Kechris.
\newblock {\em Global Aspects of Ergodic Group Actions}.
\newblock The American Mathematical Society, 2010.

\bibitem{LDr}
Francois Ledrappier, {\em Un champ markovien peut \^etre d'entropie nulle et m\'elangeant}, 
C. R. Acad. Sc. Paris 287: 561-563, 1978.

\bibitem{LX}
Chunlin Liu and Leiye Xu.
\newblock Directional bounded complexity, mean equicontinuity and discrete
  spectrum for ${{\mathbb Z}^q}$ actions.
\newblock Preprint, 2021.

\bibitem{LX2}
Chunlin Liu and Leiye Xu.
\newblock Directional Kroenecker algebra for ${{\mathbb Z}^q}$ actions.
\newblock Preprint, 2021.

\bibitem{Mattila}
Pertti Mattila.
\newblock {\em Geometry of sets and measures in {E}uclidean spaces}.
\newblock Cambridge University Press, Cambridge, 1995.

\bibitem{Mil2}
John Milnor.
\newblock Directional entropies of cellular automaton-maps.
\newblock In {\em Disordered systems and biological organization (Les Houches,
  1985)}, volume~20 of {\em NATO Adv. Sci. Inst. Ser. F Comput. Systems Sci.},
  pages 113--115. Springer, Berlin, 1986.

\bibitem{Mil}
John Milnor.
\newblock On the entropy geometry of cellular automata.
\newblock {\em Complex Systems}, 2:357--385, 1988.

\bibitem{PR}
Kyewon~Koh Park and E. Arthur Robinson, Jr.
\newblock The joinings within a class  $\Z^2$ Actions.
\newblock {\em J. d'Analyse Math.} 57: 1-36, 1991.

\bibitem{PIsrael}
Kyewon~Koh Park.
\newblock On directional entropy functions.
\newblock {\em Israel J. Math.}, 113:243--267, 1999.

\bibitem{Pugh}
Charles Pugh and Michael Shub.
\newblock Ergodic elements of ergodic actions.
\newblock {\em Compositio Math.}, 23:115--122, 1971.

\bibitem{CR}
Charles Radin.
\newblock Symmetry of tilings of the plane.
\newblock {\em Bull. Amer. Math. Soc. (N.S.)} 29: 213-217.

\bibitem{RRP}
E. Arthur Robinson, Jr.
\newblock The dynamical theory of Penrose tilings.  
\newblock {\em Transactions Amer. Math. Soc.} 384: 4447-4464, 1996.

\bibitem{RRTC}
E. Arthur Robinson, Jr.
\newblock On the table and chair. 
\newblock {\em Indag. Math. (N.S.)} 10:  581-599, 1999.

\bibitem{Rudin}
Walter Rudin.
\newblock {\em Fourier analysis on groups}.
\newblock 
  John Wiley and Sons, New York-London,
  1962.

\bibitem{Ryz}
V.~V. Ryzhikov.
\newblock Factors, rank, and embedding of a generic {$\Bbb Z^n$}-action in an
  {$\Bbb R^n$}-flow.
\newblock {\em Uspekhi Mat. Nauk}, 61:197-198, 2006.

\bibitem{Sol}
Boris Solomyak
\newblock Dynamics of self-similar tilings. 
\newblock {\em Ergodic Theory Dynam. Systems} 17:695-738, 1997. 
\bibitem{Wik}
Ingemar Wik.
\newblock Some examples of sets with linear independence.
\newblock {\em Ark. Mat.}, 5: 207--214, 1964.

\end{thebibliography}
\end{document}